\documentclass[11pt]{amsart}

\usepackage{amssymb,latexsym,amsmath}
\usepackage{bbm}
\usepackage[all]{xy}
\usepackage{graphicx}

\usepackage{enumerate}
\usepackage{amscd}
\usepackage{graphicx, array}
\usepackage{longtable}
\usepackage{epstopdf}
\usepackage{xcolor}
\usepackage{tikz-cd}
\usepackage{cite}
\usetikzlibrary{arrows.meta}
\usepackage{wrapfig}
\usetikzlibrary{calc}
\usetikzlibrary{backgrounds}
\usetikzlibrary{automata}
\usetikzlibrary{positioning}
\usepackage{scalerel}

\usepackage{amsmath,amssymb,amsfonts,amscd,euscript}
\usepackage{xy} \xyoption{all}

\usepackage{tikz}
\usepackage{color,graphics}

\DeclareMathAlphabet{\Ma}{U}{msa}{m}{n}
\DeclareMathAlphabet{\Mb}{U}{msb}{m}{n}
\DeclareMathAlphabet{\Meuf}{U}{euf}{m}{n}
\DeclareSymbolFont{ASMa}{U}{msa}{m}{n}
\DeclareSymbolFont{ASMb}{U}{msb}{m}{n}

\def\mr #1.{\mathrm{#1\,}}
\def\mrt #1.{\mathrm{\mbox{\tiny #1\,}}}
\def\mt #1.{{\mbox{\tiny $#1$}}}
\def\ms #1.{{\mbox{\small $#1$}}}

\def\C{\mathbb{C}}    
\def\N{\mathbb{N}}
\def\ol{\overline}
\def\ul{\underline}

\newcommand{\Lab}{L^{{\mathrm{ab}}}}

\newcommand{\alrtimes}{\rtimes^{\mathrm{alg}}}

\def\1{\mathbbm 1}

\def\1{\mathbbm 1}
\def\C{\Mb{C}}
\def\ot #1.{{\got{#1}}}
\def\got#1{\Meuf{#1}}
\def\al #1.{{\mathcal{#1}}}

\theoremstyle{plain}            
\newtheorem{theorem}{Theorem}[section]
\newtheorem*{maintheorem*}{Main Theorem}
\newtheorem*{maintheorem.}{Main Theorem~1'}

\newtheorem{proposition}[theorem]{Proposition}
\newtheorem{lemma}[theorem]{Lemma}
\newtheorem{corollary}[theorem]{Corollary}
\newtheorem{openproblem}{Open Problem}

\theoremstyle{definition}       
\newtheorem{definition}[theorem]{Definition}

\theoremstyle{definition}       
\newtheorem{definitions}[theorem]{Definitions}

\theoremstyle{remark}
\newtheorem{remark}[theorem]{Remark}
\newtheorem{example}[theorem]{Example}

\newcommand{\V}{\mathrm{V}}

\newcommand{\Aut}{\mathrm{Aut}}

\newcommand{\Sour}{\mathrm{Sour}}
\newcommand{\Sink}{\mathrm{Sink}}

\newcommand{\Typ}{\mathrm{Typ}}

\newcommand{\typ}{\mathrm{typ}}

\newcommand{\pHomeo}{\mathrm{\bf pHomeo}}

\def\mr #1.{\mathrm{#1\,}}
\def\mrt #1.{\mathrm{\mbox{\tiny #1\,}}}
\def\mt #1.{{\mbox{\tiny $#1$}}}
\def\ms #1.{{\mbox{\small $#1$}}}

\def\C{\mathbb{C}}    
\def\Z{\mathbb{Z}}
\def\N{\mathbb{N}}

\newfont{\Kcal}{cmsy6 scaled 1000}
\newfont{\Kgot}{eufm6 scaled 1000}

\def\Kbegin{\begin{equation} \left. \begin{array}{rcl}}
\def\Kend{\end{array} \right\} \end{equation}}

\DeclareMathSymbol{\hsemi}{\mathord}{ASMb}{"6E}
\newcommand{\semi}[2]{\mbox{$#1\kern.1em\hsemi\kern.1em#2$}}

\def\vplatz#1{{\rule{0mm}{#1}}}

\def\LA{\left\langle\bgroup}
\def\LE{\left[\bgroup}
\def\LG{\left\{\bgroup}
\def\LR{\left(\bgroup}
\def\RA{\egroup^{\rule{0mm}{0mm}}\right\rangle}
\def\RE{\egroup^{\rule{0mm}{2mm}}\right]}
\def\RG{\egroup^{\rule{0mm}{2mm}}\right\}}
\def\RR{\egroup^{\rule{0mm}{2mm}}\right)}
\def\Ldummy{\left.\bgroup}
\def\Rdummy{\egroup^{\rule{0mm}{2mm}}\right.}

\def\ccr#1{\mbox{{\rm CCR$\left({#1}^{\vplatz{1.5mm}}\right)$}}}

\def\ccr #1,#2.{\overline{\Delta(#1,\,#2)}}

\def\b #1.{{\bf #1}}
\def\cross#1.{\mathrel{\mathop{\times}\limits_{#1}}}
\def\C{\Mb{C}}
\def\N{\Mb{N}}

\def\wwh #1.{\widehat{#1}}
\def\wt #1.{\widetilde{#1}}

\def\cross #1.{\mathrel{\raise 3pt\hbox{$\mathop\times\limits_{#1}$}}}
\def\set #1,#2.{\left\{\,#1\;\bigm|\;#2\,\right\}}
\def\b #1.{{\bf #1}}
\def\ker{{\rm Ker}\,}
\def\coker{{\rm Coker}\,}

\def\rn#1.{\romannumeral{#1}}

\def\s #1.{_{\smash{\lower2pt\hbox{\mathsurround=0pt $\scriptstyle #1$}}\mathsurround=3pt}}
\def\bra #1,#2.{{\left\langle #1,\,#2\right\rangle_{\al A.}}}

\def\XP#1!{\renewcommand{\baselinestretch}{.7}\marginpar{{\footnotesize #1}\hfil}
\renewcommand{\baselinestretch}{1.5}}
\def\XB{\marginpar{
{\footnotesize\bf Change~starts----}\lower 11pt\hbox{\mathsurround=0pt$
\!\!\displaystyle{
\Bigg\downarrow}$\mathsurround=3pt}}}
\def\XE{\marginpar{{\footnotesize\bf Change~ends-----}\raise 10pt\hbox{\mathsurround=0pt$
\!\!\displaystyle{
\Bigg\downarrow}$\mathsurround=3pt}}}

\DeclareMathSymbol{\hsemi}{\mathord}{ASMb}{"6E}

\def\LA{\left\langle\bgroup}
\def\LE{\left[\bgroup}
\def\LG{\left\{\bgroup}
\def\LR{\left(\bgroup}
\def\RA{\egroup^{\rule{0mm}{0mm}}\right\rangle}
\def\RE{\egroup^{\rule{0mm}{2mm}}\right]}
\def\RG{\egroup^{\rule{0mm}{2mm}}\right\}}
\def\RR{\egroup^{\rule{0mm}{2mm}}\right)}
\def\Ldummy{\left.\bgroup}
\def\Rdummy{\egroup^{\rule{0mm}{2mm}}\right.}

\def\ccr#1{\mbox{{\rm CCR$\left({#1}^{\vplatz{1.5mm}}\right)$}}}

\newcommand{\F}{\mathbb F}

\newcommand{\act}{\curvearrowright}

\def\ccr #1,#2.{\overline{\Delta(#1,\,#2)}}

\def\b #1.{{\bf #1}}
\def\cross#1.{\mathrel{\mathop{\times}\limits_{#1}}}
\def\C{\Mb{C}}
\def\N{\Mb{N}}

\def\wwh #1.{\widehat{#1}}
\def\wt #1.{\widetilde{#1}}

\def\cross #1.{\mathrel{\raise 3pt\hbox{$\mathop\times\limits_{#1}$}}}
\def\set #1,#2.{\left\{\,#1\;\bigm|\;#2\,\right\}}
\def\b #1.{{\bf #1}}
\def\ker{{\rm Ker}\,}

\def\rn#1.{\romannumeral{#1}}

\def\s #1.{_{\smash{\lower2pt\hbox{\mathsurround=0pt $\scriptstyle #1$}}\mathsurround=3pt}}
\def\bra #1,#2.{{\left\langle #1,\,#2\right\rangle_{\al A.}}}

\def\XP#1!{\renewcommand{\baselinestretch}{.7}\marginpar{{\footnotesize #1}\hfil}
\renewcommand{\baselinestretch}{1.5}}
\def\XB{\marginpar{
{\footnotesize\bf Change~starts----}\lower
11pt\hbox{\mathsurround=0pt$ \!\!\displaystyle{
\Bigg\downarrow}$\mathsurround=3pt}}}
\def\XE{\marginpar{{\footnotesize\bf Change~ends-----}\raise 10pt\hbox{\mathsurround=0pt$
\!\!\displaystyle{ \Bigg\downarrow}$\mathsurround=3pt}}}

\title[Graph $C^*$-algebras]{An introduction to separated graphs and their type semigroups}
\author{Pere Ara}
\address{Department of Mathematics, Universitat Aut\`onoma de Barcelona, 08193 Bellaterra (Barcelona), Spain}
\email{pere.ara@uab.cat}
\date{\today}

\subjclass[2020]{46L80, 16S88, 20M18}

\keywords{Graph $C^*$-algebra, Self-similar action, Separated graph, Leavitt path algebra, type semigroup}

\thanks{The author was partially supported by the Spanish State Research Agency (grant No.\ PID2023-147110NB-I00 and CEX2020-001084-M), and by the Comissionat per Universitats i Recerca de la Generalitat de Catalunya (grant No.\ 2021-SGR-01015). }

\begin{document}
\maketitle

\begin{abstract}
We introduce $C^*$-algebras associated with directed graphs, along with two generalizations of this concept, namely Exel-Pardo $C^*$-algebras associated with a self-similar action of a group on a directed graph, and the $C^*$-algebras associated with separated graphs. These constructions have in common that they have a dynamical behavior, being the groupoid $C^*$-algebras associated to certain topological groupoids, which are built from the combinatorial structure. An important invariant one may associate to these dynamical systems is the so-called type semigroup. We will find a formula to compute the type semigroup for a general self-similar action of a group on a row-finite graph $E$ without sources, following a recent paper by Kwaśniewski, Meyer and Prasad, and for any finite bipartite separated graph, following a paper by Exel and the author. In addition, we will review various results concerning the structure of the type semigroup for different dynamical systems.  
\end{abstract}
\tableofcontents

\section{Introduction}

\vskip1cm

These notes are an expanded version of the lecture notes for the course ``Introduction to separated graphs" that I delivered at the Summer School Luis Santaló ``Structure and approximation of $C^*$-algebras", held at the UIMP, Santander, in July 2025.  

Graph $C^*$-algebras are a central object in modern operator algebra theory, connecting functional analysis, topology, dynamics, and even number theory. Their importance comes from how they translate combinatorial data (graphs) into rich analytic structures ($C^*$-algebras), creating a powerful bridge between discrete and continuous mathematics. Graph $C^*$-algebras associated to directed graphs generalize the important class of Cuntz-Krieger algebras \cite{CK}, and provide important models for the classification of $C^*$-algebras. In turn, they have been generalized in several directions, some of which will be considered here.
 
In these notes, we introduce $C^*$-algebras associated with directed graphs, along with two generalizations of this concept, namely Exel-Pardo $C^*$-algebras associated with a self-similar action of a group on a directed graph, and the $C^*$-algebras associated with separated graphs. These constructions have in common that they have a dynamical behavior, being the groupoid $C^*$-algebras associated to certain topological groupoids, which are built from the combinatorial structure. As such, an important invariant one may associate to the dynamical systems is the so-called {\it type semigroup}. This has its origins in the work of Tarski \cite{tarski38, TomWagon} on paradoxical decompositions, where the type semigroup serves to express paradoxical decompositions (or their absence) in an algebraic language.
Hence the type semigroup is a purely dynamical device that one may try to directly compute from the combinatorial object. We show formulas for the computation of the type semigroup for a general 
self-similar action of a group on a row-finite graph $E$ without sources, following the recent paper \cite{KMP25}, and for any finite bipartite separated graph, following \cite{ae14}. 

There is a natural map from the type semigroup to the Murray-von Neumann monoid of projections of the associated $C^*$-algebras, which in some cases, turns out to be a monoid isomorphism. 
We advocate here that this map is a non-stable version of the well-known map $H_0(\mathcal G) \to K_0 (C^*(\mathcal G))$ that shows up in some formulations of Matui's Conjectures \cite{matui, BDGW}, see Remark \ref{rem:HKConjecture}.

We will survey two important results of Wehrung \cite{weh} concerning the type semigroup. The first asserts that every countable conical refinement monoid can be represented as the type monoid of an 
action of a countable group on a locally compact Hausdorff zero-dimensional metrizable space (Theorem \ref{thm:wehrung-realization}). The second asserts that the type semigroup associated to the action of any supramenble group on a locally compact Hausdorff zero-dimensional metrizable space is always strongly separative (Theorem \ref{thm:supramenable}), which is a powerful cancellation condition.

In these notes, I intend to follow a lively, direct expository style, trying to avoid technical conditions that may obscure the essence of the arguments. 
I have included several concrete examples through the notes in order to illustrate the concepts and results. 

\vskip2cm


\section{Graph $C^*$-algebras}
\label{sect:graphs}

\vskip1cm


\subsection{Graphs}

We start by fixing notation for graphs, following Raeburn's book \cite{rae}. We warn the reader that different notations are used in other sources.

\begin{definitions}
	\label{defs:graphs} A {\it directed graph} $E=(E^0,E^1,r,s)$ consists of two countable sets $E^0$ (the vertices) and $E^1$ (the edges) and functions $r,s\colon E^1\to E^0$ giving the range and the source of an edge $e\in E^1$.   

A {\it path} in $E$ is given by a finite string of edges $\gamma = e_1\cdots e_n $ such that $s(e_i)= r(e_{i+1})$ for all $1\le i <n$. The integer $n$ is called the {\it length} of the path, denoted by
$|\gamma |$. Vertices are considered as paths of length $0$, and are called {\it trivial paths}.  
We set $s(\gamma) = s(e_n)$ and $r(\gamma ) = r(e_1)$. We also will consider {\it infinite paths} $\gamma = e_1 e_2\cdots $. In this case, we set $r(\gamma) = r(e_1)$.
\end{definitions}

We will deal here with {\it row-finite graphs}, which are the graphs $E$ such that the adjacency matrix $A_E$,
$$A_E(v,w) = \# \{ e\in E^1 : r(e) = v, s(e) = w \}$$ 
has a finite number of nonzero entries at each row. This means that the vertices in $E$ receive only finitely many arrows from the graph.
Of course, this includes all {\it finite graphs}, which by definition are those graphs $E$ such that $E^0$ and $E^1$ are both finite sets. 

A {\it source} in a directed graph $E$ is a vertex $v\in E^0$ such that $v$ does not receive any edge, that is, such that $r^{-1}(v) = \emptyset$. Similarly, a {\it sink} is a vertex $v\in E^0$ such that $s^{-1}(v)= \emptyset$. We denote the set of all sources of $E$ by $\Sour (E)$, and the set of all sinks by $\Sink (E)$.

\subsection{Cuntz-Krieger $E$-families}

Recall that a {\it projection} in a $*$-ring (or in a $*$-semigroup) is a self-adjoint idempotent element $p$, that is, an element $p$ such that $p=p^*= p^2$. A {\it partial isometry} is an element $w$ such that
$w= ww^*w$. Note that $w$ is a partial isometry if and only if $w^*$ is a partial isometry, and that, for a partial isometry $w$, the elements $ww^*$ and $w^*w$ are both projections. 

\medskip
  
\begin{definition}
Let $E$ be a row-finite graph. A {\it Cuntz-Krieger $E$-family} $\{S,P\}$ in a $C^*$-algebra $A$ is a set $\{ P_v : v\in E^0\}$ of mutually orthogonal projections and a set $\{S_e: e\in E^1\}$ of partial isometries such that 
  \begin{enumerate}
  	\item[(CK1)] $S_e^*S_e = P_{s(e)}$ for all $e\in E^1$,
  	\item[(CK2)] $P_v = \sum_{e\in r^{-1} (v)} S_eS_e^*$ whenever $v$ is not a source.  
  \end{enumerate}
\end{definition}
  
  \begin{definition}
  	\label{def:graphCalg}
 Let $E$ be a row-finite graph. Then the {\it graph $C^*$-algebra} $C^*(E)$ of $E$ is the $C^*$-algebra generated by a universal Cuntz-Krieger $E$-family $\{s,p\}$.    	
  	\end{definition}
  
  Specifically, the universal property described in the above definition says that for any Cuntz-Krieger $E$-family $\{S,P\}$ in any $C^*$-algebra $A$, there exists a unique $*$-homomorphism $\phi\colon C^*(E)\to A$ such that $\phi (p_v)= P_v$ for all $v \in E^0$ and $\phi (s_e)= S_e$ for all $e\in E^1$.  
  
  There is another way to define a graph $C^*$-algebra, using a $*$-algebra called the {\it Leavitt path algebra}
  of $E$. Although this algebra can be indeed defined over an arbitrary commutative unital ring, we will use here only algebras over the complex numbers. We consider complex $*$-algebras, which are algebras over $\C$ with an involution $*$ such that $(\lambda a)^* = \ol{\lambda}a^*$ for all $\lambda\in \C$ and $a\in A$.  
   
    \begin{definition}
   	\label{def:graphCalg}
   	Let $E$ be a row-finite graph. Then the {\it Leavitt path algebra} $L(E)$ is the complex $*$-algebra with generators $\{ v, e :  v\in E^0, e\in E^1 \}$, subject to the
   	following relations:
   	\begin{enumerate}
   		\item[] (V)\ \ $vw = \delta_{v,w}v$ \ and \ $v=v^*$ \ for all $v,w \in E^0$ ,
   		\item[] (E)\ \ $r(e)e=es(e)=e$ \ for all $e\in E^1$ ,
   		\item[] (CK1)\ \ $e^*f=\delta _{e,f} s(e)$ \ for all $e,f\in E^1$, and
   		\item[] (CK2)\ \ $v=\sum _{ e\in r^{-1}(v) }ee^*$ \ for every $v\in E^0\setminus \Sour (E)$.
   	\end{enumerate}
   
   The graph $C^*$-algebra is the {\it enveloping $C^*$-algebra} of $L(E)$, that is, it is a $C^*$-algebra with a $*$-homomorphism $L(E)\to C^*(E)$ such that every $*$-homomorphism $\tau\colon L(E)\to A$ to a $C^*$-algebra $A$ factors through $C^*(E)$.  
   	\end{definition}

It turns out that $L(E)$ is naturally a $*$-subalgebra of $C^*(E)$ and $C^*(E)$ is a completion of $L(E)$ with respect to a suitable norm. See \cite{AAS} for a comprehensive introduction to the theory of Leavitt path algebras.

\subsection{Examples}

We offer here some examples of finite graphs, along with their corresponding graph $C^*$-algebras. A primary example in the theory consists of the $n$-rose graph $R_n$, as follows:
   		$$R_n \ \ =  \ \ \ \ \ \   \ \ \xymatrix{ \bullet^v \ar@(ur,dr)^{x_1} \ar@(u,r)^{x_2} \ar@(ul,ur)^{x_3} \ar@{.}@(l,u) \ar@{.}@(d,l) \ar@{.}@(dl,ul) \ar@{.}@(dr,dl) \ar@(r,d) ^{x_n}& }$$
$R_n$ has a single vertex and $n$ edges $x_1,x_2,\dots ,x_n$. For $n\ge 2$, the graph $C^*$-algebra $C^*(R_n)$ is the famous {\it Cuntz algebra} $\mathcal O _n$ \cite{Cuntz77}. Explicitly, $\mathcal O _n$ is the $C^*$-algebra generated by $n$ isometries $x_1,\dots ,x_n$ with orthogonal ranges and whose final projections add up to $1$:
$$x_i^*x_j = \delta_{ij} 1,\qquad \sum_{i=1}^n x_ix_i^*= 1.$$
These algebras were introduced by Cuntz \cite{Cuntz77, Cuntz81}, and have played and still continue to play a very important role in the theory of $C^*$-algebras. Similar algebras, known nowadays as {\it Leavitt algebras}, were built by Leavitt \cite{Lea57, Lea62} in order to study the (failure of the) IBN property for rings. Indeed, for each $1\le m  < n$ and for any field $K$, Leavitt constructed an algebra $L_K(m,n)$ with the property that $L_K(m,n)^m \cong L_K(m,n)^n$ as right $L_K(m,n)$-modules, but such isomorphism does not hold for smaller integers. Moreover the algebras $L_K(m,n)$ are universal satisfying the stated isomorphism. It is worth to mention that the work of Cuntz was done independently of the work of Leavitt. Observe that Cuntz algebras only deal with the case $m=1$, that is, the Leavitt path algebra $L(R_n)$ coincides with the algebra $L_{\C} (1,n)$.  Some years later, McClanahan \cite{MCl92, MCl93} constructed $C^*$-algebras $U_{m,n}^{\text{nc}}$ which are related to Leavitt algebras $L_\C (m,n)$. Namely $U_{m,n}^{\text{nc}}$ is the universal $C^*$-algebra generated by the entries of a unitary $m\times n $ matrix $U=(u_{ij})$.
The work of McClanahan (see also \cite{Brown}) was also done independently of the work by Leavitt. 

In the special case $n=1$, we have the single edge graph
$$R_1 \ \ = \ \ \xymatrix{{\bullet}^{v} \ar@(ur,dr) ^e}$$
whose graph $C^*$-algebra is $C(\mathbb T)$, the algebra of continuous functions on the circle $\mathbb T$. 
The $C^*$-algebra $C(\mathbb T)$ behaves in a different way than the Cuntz algebras $\mathcal O _n$ for $n\ge 2$. The former is commutative and has lots of ideals, the latter is highly non-commutative and is a {\it simple} $C^*$-algebra, that is, it does not contain non-trivial closed two-sided ideals. 

Other interesting and standard examples of graph $C^*$-algebras include the $C^*$-algebra $C^*(A_n)$ associated to the $n$-line quiver  
  $$ \index{$A_n$} A_n \ \ = \ \ \xymatrix{{\bullet}^{v_1} \ar [r] ^{e_1} & {\bullet}^{v_2} \ar [r] ^{e_2} & {\bullet}^{v_3} \ar@{.}[r] &
	{\bullet}^{v_{n-1}} \ar [r] ^{e_{n-1}} & {\bullet}^{v_n}} $$
It is not difficult to show that $C^*(A_n)\cong M_n(\C)$, the $C^*$-algebra of all $n\times n$ matrices with coefficients in $\C$. 

Finally the graph
$$ E_T \ \ = \ \ \xymatrix{ {}^e \hskip-.3in & {\bullet}^u \ar@(ul,dl) &{\bullet}^v \ar[l]_f }   \ \ .$$
gives rise to the {\it Toeplitz $C^*$-algebra}, that is, the $C^*$-algebra of operators on $\ell^2(\N)$ generated by the unilateral shift $T(f_n)= f_{n+1}$, where $\{f_n : n\in \N \}$ is the standard  orthonormal basis of $\ell^2 (\N)$. 
   	
Another interesting source of examples comes from the relations with non-commutative geometry, in particular quantum spheres can be modeled using graph $C^*$-algebras, see \cite{Dandrea} for a nice survey.    
\subsection{When is a graph $C^*$-algebra simple?}

We determine here when a graph $C^*$-algebra $C^*(E)$ is simple. We will deduce in particular that $\mathcal O _n$ is simple for all $n\ge 2$. 

For a path $\mu = e_1e_{2} \cdots e_n$ in $E$, set $s_\mu= s_{e_1}s_{e_{2}}\cdots s_{e_n}$. Thanks to the relation (CK1), $C^*(E)$ is the closed linear span of the family of elements $s_\mu s_\nu^*$, where $\mu,\nu$ range on all pairs of paths such that $s(\mu) = s(\nu)$. 

A {\it cycle} in $E$ is a non-trivial closed path $\mu = e_1\cdots e_n$ (i.e. $r(\gamma ) = s(\gamma)$) such that $s(e_i)\ne s(e_j)$ for all $i\ne j$. We say that a cycle $\mu$ as above {\it has an entry} if there is an edge $f$ and $1\le i\le n$ such that $r(f)= r(e_i)$ and $f\ne e_i$.   

We first state two important uniqueness results for graph $C^*$-algebras. For the proof see e.g. \cite{rae}.

\begin{theorem} (The Cuntz-Krieger uniqueness theorem)
	\label{thm:CKuniqueness}  Suppose $E$ is a row-finite directed graph in which every cycle has an entry. 
	If $A$ is a $C^*$-algebra and $\varphi \colon C^*(E) \to A$ is a $*$-homomorphism such that $\varphi (p_v)\ne 0$ for all $v\in E^0$, then $\varphi$ is injective. 
	\end{theorem}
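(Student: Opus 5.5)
The plan is to go through the gauge action and a faithful conditional expectation onto the AF core, and to use the entry condition to compress elements of the core into corners where they are faithfully represented. The gauge action $\gamma\colon \mathbb T\to \Aut(C^*(E))$ is given by $\gamma_z(s_e)=zs_e$ and $\gamma_z(p_v)=p_v$; it exists by the universal property. Averaging over $\mathbb T$ gives a faithful conditional expectation $\Phi\colon C^*(E)\to C^*(E)^\gamma$. On spanning elements,
$$\Phi(s_\mu s_\nu^*)=\delta_{|\mu|,|\nu|}\, s_\mu s_\nu^*.$$

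First I would record the structure of the fixed-point algebra. $C^*(E)^\gamma$ is the closed span of the $s_\mu s_\nu^*$ with $|\mu|=|\nu|$, and it is an increasing union of the subalgebras $\mathcal F_k$ spanned by such elements with $|\mu|=|\nu|\le k$. Each $\mathcal F_k$ is a direct sum of matrix algebras, possibly countably many. Each minimal summand is generated by matrix units $s_\mu s_\nu^*$ with $|\mu|=|\nu|=k$, $s(\mu)=s(\nu)=v$, together with summands coming from sinks of shorter length. By (CK1) these are genuine matrix units, and they are nonzero in $A$ once $\varphi(p_v)\neq 0$, because $\varphi(s_\mu)^*\varphi(s_\mu)=\varphi(p_{s(\mu)})$. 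Since a $*$-homomorphism of a matrix algebra is injective as soon as it does not kill a minimal projection, $\varphi$ is injective on every $\mathcal F_k$. Hence it is isometric on each $\mathcal F_k$, and so injective on $C^*(E)^\gamma$.

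Next, to conclude that $\varphi$ is injective it suffices to prove the norm inequality
$$\|\varphi(\Phi(a))\|\le \|\varphi(a)\| \quad \text{for all } a \in C^*(E).$$
Indeed, if $\varphi(a)=0$ then $\varphi(\Phi(a^*a))=0$. Injectivity on the core then gives $\Phi(a^*a)=0$, and faithfulness of $\Phi$ gives $a=0$. By density it is enough to check the inequality for finite sums
$$a=\sum c_{\mu,\nu}s_\mu s_\nu^*.$$

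The main obstacle is this inequality, and this is exactly where the hypothesis that every cycle has an entry enters. Fix such an $a$ and let $k$ exceed all path lengths in it. Since $\varphi$ is isometric on $\mathcal F_k$, I would choose a minimal summand of $\mathcal F_k$, and a projection $Q$ in it, such that $\|\varphi(\Phi(a))\|$ is attained, up to $\varepsilon$, on $Q\Phi(a)Q$; here $Q=s_\lambda s_\lambda^*$ with $|\lambda|=k$. Using the entry condition, I would extend $\lambda$ to a path $\lambda\beta$ that leaves every cycle. If $\beta$ is long enough, then $\beta$ cannot be returned to by the paths involved: no off-diagonal term can map $s_{\lambda\beta}$ to itself. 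Concretely, $s_{\lambda\beta}^*\,s_\mu s_\nu^*\,s_{\lambda\beta}=0$ whenever $|\mu|\neq|\nu|$. If $s(\lambda)$ lies on no cycle or reaches a sink, this is immediate; otherwise an entry lets us exit, and we repeat as needed, using row-finiteness to keep everything finite.

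With $P=s_{\lambda\beta}s_{\lambda\beta}^*$, one then gets $PaP=P\Phi(a)P$, and $\|\varphi(P\Phi(a)P)\|=\|\varphi(Q\Phi(a)Q)\|$ because of the matrix-unit structure. This yields
$$\|\varphi(\Phi(a))\|-\varepsilon\le \|\varphi(PaP)\|\le \|\varphi(a)\|,$$
which is the required inequality.
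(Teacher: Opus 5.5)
Your architecture is the standard one from Raeburn's book, to which the paper defers without giving its own proof: the gauge expectation $\Phi$, injectivity of $\varphi$ on the core, reduction to $\|\varphi(\Phi(a))\|\le\|\varphi(a)\|$ on the dense span, and entries used to kill the terms of nonzero degree after a compression. There is a genuine gap in the compression step, because a single diagonal rank-one projection cannot see the norm of $\Phi(a)$. Write $\Phi(a)=\sum c_{\mu,\nu}s_\mu s_\nu^*$ with $|\mu|=|\nu|=k$ (plus terms at sources). For $Q=s_\lambda s_\lambda^*$ with $|\lambda|=k$, (CK1) gives $Qs_\mu s_\nu^*Q=\delta_{\lambda,\mu}\delta_{\lambda,\nu}Q$. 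Hence $Q\Phi(a)Q=c_{\lambda,\lambda}Q$, and likewise $P\Phi(a)P=c_{\lambda,\lambda}P$ for $P=s_{\lambda\beta}s_{\lambda\beta}^*$. Your chain of inequalities therefore only proves $|c_{\lambda,\lambda}|\le\|\varphi(a)\|$, a bound on the diagonal coefficients.

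In general no $Q$ ``attaining the norm up to $\varepsilon$'' exists. In $\mathcal O_2=C^*(R_2)$, the element $a=s_{x_1}s_{x_2}^*$ satisfies $\Phi(a)=a$ with norm $1$, yet $QaQ=0$ for every $Q=s_\lambda s_\lambda^*$ with $|\lambda|\ge 1$. Passing to $a^*a$ does not rescue this. The largest diagonal entry of a positive matrix controls its norm only up to the size of the block. That constant depends on $a$, so the inequality no longer extends by continuity from the span to all of $C^*(E)$.

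The repair is to compress by a projection that carries a whole copy of the relevant block.
\begin{enumerate}
\item Since the blocks of the core are mutually orthogonal, pick the vertex $v$ whose block component $\Phi(a)_v$ realizes $\|\varphi(\Phi(a))\|$.
\item Let $F_v$ be the finite set of length-$k$ paths with source $v$ occurring in $\Phi(a)_v$, fix a single tail $\beta$ with $r(\beta)=v$, and put $P=\sum_{\alpha\in F_v}s_{\alpha\beta}s_{\alpha\beta}^*$.
\item Then $Ps_\mu s_\nu^*P=s_{\mu\beta}s_{\nu\beta}^*$ for $\mu,\nu\in F_v$. This is an isomorphism between systems of matrix units not killed by $\varphi$, since $\varphi(p_{s(\beta)})\neq 0$. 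Hence $\|\varphi(P\Phi(a)P)\|=\|\varphi(\Phi(a))\|$.
\end{enumerate}
You must now check $s_{\alpha\beta}^*s_\mu s_\nu^*s_{\alpha'\beta}=0$ for all $\alpha,\alpha'\in F_v$, not only $\alpha=\alpha'$, whenever $|\mu|\neq|\nu|$ and $|\mu|,|\nu|\le k$. A direct computation shows what a nonzero product with $|\mu|-|\nu|=d\ge 1$ forces: $s(\beta)$ receives an edge, and if $|\beta|>k$ then $\beta_1\cdots\beta_{|\beta|-d}=\beta_{d+1}\cdots\beta_{|\beta|}$.

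So the lemma you need from the entry condition is this: for every $v$ and $k$ there is a path $\beta$ with $r(\beta)=v$ such that either $s(\beta)$ is a source, or $|\beta|>k$ and $\beta$ has no self-overlap of shift $d$ for $1\le d\le k$. If $v$ lies on no cycle, any long path works. Otherwise, wind around a cycle through $v$ enough times and leave through an entry. Your phrase ``leaves every cycle'' should be replaced by this precise statement.

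Two minor slips. In the paper's conventions the non-extendable paths end at sources, not sinks. Also, row-finiteness does not bound the number of paths with a given source, so the blocks of $\mathcal F_k$ can be $\mathcal K(\ell^2)$ rather than matrix algebras; this is harmless, since these are simple.
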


For the second theorem, we need the notion of the gauge action of $\mathbb T$ on $C^*(E)$. For $z\in \mathbb T$, this action is defined on generators by $\gamma_z(p_v)= p_v$ for all $v\in E^0$, $\gamma_z (s_e) = z s_e$ and
$ \gamma_z (s_e^*) = \ol{z} s_e^*$ for all $e\in E^1$. 

\begin{theorem} (The gauge-invariant uniqueness theorem)
	\label{thm:gauge-invariant}
	Let $E$ be a row-finite directed graph, and suppose that $B$ is a $C^*$-algebra with a continuous action of $\mathbb T$. Then every equivariant $*$-homomorphism $\varphi \colon C^*(E)\to B$ such that $\varphi (p_v)\ne 0$ for all $v\in E^0$ is injective. 
\end{theorem}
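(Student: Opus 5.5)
The plan is to use the gauge action to produce a faithful conditional expectation onto the fixed-point algebra, and then prove injectivity on that fixed-point algebra directly, where it reduces to finite-dimensional matrix algebras. Let $\gamma$ denote the gauge action on $C^*(E)$, $\beta$ the action on $B$, and note that equivariance means $\varphi\circ\gamma_z=\beta_z\circ\varphi$.

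\textbf{Step 1: the two expectations and the reduction.} Averaging over the circle, define
$$\Phi(a)=\int_{\mathbb T}\gamma_z(a)\,dz \quad\text{on } C^*(E), \qquad \Psi(b)=\int_{\mathbb T}\beta_z(b)\,dz \quad\text{on } B.$$
$\Phi$ is a faithful conditional expectation onto the fixed-point algebra $C^*(E)^\gamma$: faithfulness holds because $\Phi(a^*a)=0$ together with norm-continuity of $z\mapsto\gamma_z(a^*a)\ge 0$ forces $a^*a=0$. Equivariance gives $\varphi\circ\Phi=\Psi\circ\varphi$. Now suppose $\varphi(a)=0$. Then $\varphi(\Phi(a^*a))=\Psi(\varphi(a^*a))=0$. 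Hence if $\varphi$ is injective on $C^*(E)^\gamma$, we get $\Phi(a^*a)=0$, so $a=0$. It therefore suffices to prove injectivity of $\varphi$ restricted to $C^*(E)^\gamma$.

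\textbf{Step 2: structure of the fixed-point algebra.} Since $\gamma_z(s_\mu s_\nu^*)=z^{|\mu|-|\nu|}s_\mu s_\nu^*$, applying $\Phi$ to the spanning family shows that $C^*(E)^\gamma$ is the closed span of the elements $s_\mu s_\nu^*$ with $|\mu|=|\nu|$. It is thus the closure of the increasing union of the subalgebras
$$\mathcal F_k=\overline{\operatorname{span}}\{s_\mu s_\nu^* : |\mu|=|\nu|\le k\}.$$
To organise $\mathcal F_k$, fix $k$ and a finite set $F$ of vertices, and use (CK2) to expand any $s_\mu s_\nu^*$ with $|\mu|<k$ into terms of length exactly $k$, except at paths ending at a source. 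In this way the part of $\mathcal F_k$ supported on $F$ becomes a finite direct sum of algebras spanned by matrix units $\{s_\mu s_\nu^*\}$. The blocks are indexed by a vertex $v$ together with a length $j\le k$, where either $j=k$, or $j<k$ and $v$ is a source. Row-finiteness ensures each block involves only finitely many paths $\mu$ with $r(\mu)\in F$, so each block is a full matrix algebra. (CK1) shows these are genuine matrix units: $s_\nu^* s_{\nu'}=\delta_{\nu,\nu'}p_{s(\nu)}$ for paths of equal length. Consequently $C^*(E)^\gamma$ is an AF algebra, the inductive limit of these finite-dimensional pieces.

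\textbf{Step 3: injectivity on the AF core, and the main obstacle.} A $*$-homomorphism from a matrix algebra is injective as soon as it is nonzero on a single minimal projection. Within each block the diagonal entries are the projections $s_\mu s_\mu^*$, and
$$\varphi(s_\mu s_\mu^*)^*\,\varphi(s_\mu s_\mu^*)\ \text{is equivalent to}\ \varphi(s_\mu^* s_\mu)=\varphi(p_{s(\mu)})\ne 0,$$
so $\varphi(s_\mu s_\mu^*)\neq 0$ by the hypothesis. Hence $\varphi$ is injective on each finite-dimensional piece, and therefore isometric there. An injective $*$-homomorphism of $C^*$-algebras is isometric, so $\varphi$ is isometric on the dense union of the pieces, hence on its closure $C^*(E)^\gamma$, and in particular injective there. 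The main obstacle is the bookkeeping in Step 2: the blocks must be chosen so that they really are mutually orthogonal full matrix algebras with the correct matrix-unit relations, with particular care at sources, where (CK2) cannot be applied. I would handle this carefully in the finite-$F$ truncation before passing to the limit.
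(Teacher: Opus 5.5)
Your proposal is correct. It is the standard argument: a faithful expectation $\Phi$ onto the gauge core, which is AF, combined with injectivity on the core because each diagonal matrix unit $s_\mu s_\mu^*$ is sent to a projection equivalent to $\varphi(p_{s(\mu)})\neq 0$. This is the proof in Raeburn's book, which the paper cites instead of proving the theorem. Your cut-down by $\sum_{v\in F}p_v$, together with the separate lower-length blocks at sources, correctly handles the bookkeeping you flagged in Step 2.
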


We can now discuss the simplicity of graph $C^*$-algebras. It turns out that certain subsets of vertices of a graph $E$ give rise to ideals of $C^*(E)$. Hence to get a simple $C^*$-algebra we need to get rid of these special subsets. Let $E^{\infty}$ be the set of infinite paths $\mu = e_1e_2e_3\cdots $ on $E$, and let $E^{\le \infty}$ be the set of infinite paths on $E$ together with the set of all finite paths starting at a source. 
We say that $E$ is {\it cofinal} if for each $\mu \in E^{\le \infty}$ and for each $w\in E^0$ there exists a (finite) path $\gamma $ such that $s(\gamma)$ is a vertex in the path $\mu$ and $r(\gamma )= w$. 

For vertices $v,w$ in $E^0$ write $w\le v$ if there exists a path $\mu$ such that $s(\mu) = v$ and $r(\mu) =w$. This defines a preorder relation on $E^0$.

\begin{theorem}
	\label{thm:charac-simplicity}
	Let $E$ be a row-finite directed graph. Then $C^*(E)$ is simple if and only if $E$ is cofinal and each cycle of $E$ has an entry. 
\end{theorem}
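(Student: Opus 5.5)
We prove the two implications separately, using the Cuntz--Krieger uniqueness theorem (Theorem \ref{thm:CKuniqueness}) for sufficiency and explicit quotients or representations for necessity.

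\emph{Sufficiency.} Assume $E$ is cofinal and every cycle has an entry. Let $I$ be a nonzero closed ideal of $C^*(E)$ and consider the quotient map $\pi\colon C^*(E)\to C^*(E)/I$. Since $\pi$ is not injective, Theorem \ref{thm:CKuniqueness} gives a vertex $v$ with $p_v\in I$. We then show that the set $H=\{w\in E^0: p_w\in I\}$ is all of $E^0$. This set has two closure properties.
\begin{enumerate}
\item[(a)] It is \emph{hereditary}: if $w\in H$ and $e$ is an edge with $r(e)=w$, then $p_{s(e)}=s_e^*p_ws_e\in I$. Iterating, every vertex that is the source of a path ending in $H$ lies in $H$.
\item[(b)] It is \emph{saturated}: if $w$ is not a source and $s(e)\in H$ for all $e\in r^{-1}(w)$, then $p_w=\sum_{e\in r^{-1}(w)} s_ep_{s(e)}s_e^*\in I$.
\end{enumerate}
Now suppose some $w_0\notin H$. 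By saturation, $w_0$ is either a source or receives an edge $e_1$ with $s(e_1)\notin H$. Repeating this produces an element $\mu\in E^{\le\infty}$ with $r(\mu)=w_0$, all of whose vertices lie outside $H$. Cofinality applied to $\mu$ and the vertex $v\in H$ gives a path $\gamma$ with $r(\gamma)=v$ and $s(\gamma)$ a vertex of $\mu$. By heredity, $s(\gamma)\in H$, a contradiction. Hence $H=E^0$, so all $p_w$ lie in $I$, and therefore all $s_e=s_ep_{s(e)}$ lie in $I$. Thus $I=C^*(E)$.

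\emph{Necessity, cycles.} Suppose some cycle $\mu=e_1\cdots e_n$ has no entry, with $v=r(\mu)$. We would like to show $C^*(E)$ is not simple. The main step is to exhibit a corner isomorphic to $C(\mathbb T)$. Because $\mu$ has no entry, (CK2) at each vertex of $\mu$ reads $p_{r(e_i)}=s_{e_i}s_{e_i}^*$. Hence $s_\mu$ is a unitary in $p_vC^*(E)p_v$. Moreover, every $s_\alpha s_\beta^*$ with $r(\alpha)=r(\beta)=v$ reduces to a power of $s_\mu$ or of $s_\mu^*$, so
\[
p_vC^*(E)p_v=C^*(s_\mu)\cong C(\sigma(s_\mu)).
\]
The gauge action satisfies $\gamma_z(s_\mu)=z^ns_\mu$, and $s_\mu\neq 0$ (use a representation on $\ell^2$ of paths, or Theorem \ref{thm:gauge-invariant}), so $\sigma(s_\mu)$ is invariant under rotation and hence equals $\mathbb T$. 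Therefore $p_vC^*(E)p_v\cong C(\mathbb T)$, which is not simple. Since a full corner of a simple algebra is simple, and $p_v\neq0$ makes this corner full whenever $C^*(E)$ is simple, $C^*(E)$ cannot be simple.

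\emph{Necessity, cofinality.} Suppose cofinality fails for some $\mu\in E^{\le\infty}$ and some $w\in E^0$. Let
\[
H=\{u\in E^0 : \text{no vertex of } \mu \text{ is the source of a path with range } u\}^c ,
\]
that is, $H$ is the set of vertices $u$ admitting a path with range $u$ whose source is a vertex of $\mu$. Then $w\notin H$. The complement $K=E^0\setminus H$ is hereditary and saturated, and it contains $w$ but no vertex of $\mu$.

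Let $I_K$ be the ideal generated by $\{p_u:u\in K\}$. We have $I_K\neq0$ since $p_w\neq 0$. To see $I_K$ is proper, build a Cuntz--Krieger family for the restricted graph $E\setminus K$. This restriction is valid because $K$ is hereditary and saturated, so (CK2) survives at every vertex outside $K$. The resulting map $C^*(E)\to C^*(E\setminus K)$ kills $I_K$ but not $p_{r(\mu)}$. Alternatively, represent $C^*(E)$ on $\ell^2$ of the tail-equivalence class of $\mu$: there $p_u$ acts as $0$ for every $u\in K$, while $p_{r(\mu)}\neq0$.

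I expect the main obstacle to be the cycle case, specifically verifying that the corner $p_vC^*(E)p_v$ is generated by $s_\mu$ alone and identifying its spectrum as all of $\mathbb T$.
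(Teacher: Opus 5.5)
Your proof is correct. For the parts the paper actually argues, it follows the same route; for the part the paper omits, it fills the gap correctly.

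\emph{Sufficiency.} This is the paper's argument, phrased through the ideal rather than the representation. You apply Theorem~\ref{thm:CKuniqueness} to the quotient map to get one $p_v\in I$, then use heredity, saturation and cofinality to force $\{w : p_w\in I\}=E^0$. The paper instead shows that a nonzero Cuntz--Krieger family has $P_w\neq 0$ for every $w$. The backward path through vertices with nonzero projection, followed by an appeal to cofinality, is essentially identical in both.

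\emph{Necessity of cofinality.} Your $K$ is exactly the paper's $H$. The paper leaves one step to Raeburn: that $p_{r(\mu)}$ does not lie in the ideal generated by $\{p_u : u\in K\}$. Your map $C^*(E)\to C^*(E\setminus K)$, or alternatively the representation on the tail class of $\mu$, supplies that step explicitly.

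\emph{Necessity of entries.} The paper does not prove this at all; it says it only treats cofinality. Your corner argument is a correct, self-contained way to supply it. $s_\mu$ is a unitary that generates $p_vC^*(E)p_v$. The gauge action makes $\sigma(s_\mu)$ invariant under rotation, so $\sigma(s_\mu)=\mathbb T$ and the corner is $C(\mathbb T)$, which is not simple.

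One small remark concerns nonvanishing of the vertex projections. You use it several times: for the nonempty spectrum of $s_\mu$, for $I_K\neq 0$, and for $q_{r(\mu)}\neq 0$ in $C^*(E\setminus K)$. It is the representation on $\ell^2(E^{\le\infty})$ that provides it. Theorem~\ref{thm:gauge-invariant} cannot serve here, since it assumes $\varphi(p_v)\neq 0$ rather than proving it.
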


{\it Sketch of the proof.} Suppose that $E$ is cofinal and each cycle has an entry. We show that if $\{ S,P\}$ is a nonzero Cuntz-Krieger $E$-family in a $C^*$-algebra $A$, then $P_w\ne 0$ for all $w\in E^0$. By assumption there is a vertex $v$ in $E^0$ such that $P_v\ne 0$.  

Let $w\in E^0$. If $v$ is not a source, the relation (CK2) implies that there is an edge $e$ such that $r(e)= v$ and $S_eS_e^*\ne 0$. Then  $P_{s(e)}=S_e^*S_e \ne 0$. If $s(e)$ is not a source, the same argument can be repeated at $s(e)$. In this way we build either an infinite path $\mu$ or a finite path $\mu$ starting at a source such that $P_u\ne 0$ for all vertex $u$ in $\mu$. By cofinality there is a finite path $\alpha$ such that $s(\alpha)= u$ and $r(\alpha) = w$, where $u$ is a vertex in $\mu$. Since $S_\alpha^* S_{\alpha}=P_u\ne 0$ and
$P_wS_{\alpha}S_{\alpha}^* = S_\alpha S_\alpha ^* \ne 0$, it follows that $P_w\ne 0$. 

This implies that $P_w\ne 0$ for all $w\in E^0$. Hence any nonzero representation $C^*(E)\to A$ is injective by Theorem \ref{thm:CKuniqueness}, and it follows that $C^*(E)$ is simple.

For the converse, we only show that $E$ must be cofinal if $C^*(E)$ is simple.  Indeed if $\mu \in E^{\le \infty}$, and the set of vertices $w$ such that there is no finite path from a vertex of $\mu$ to $w$ is non-empty, then the set $H$ of these vertices would generate a nontrivial closed two-sided ideal $I$ of $C^*(E)$, contradicting the simplicity of $C^*(E)$. Indeed it is trivial that 
$H\ne \emptyset$ and that $H\ne E^0$, hence we are done if we can show that 
$$H= \{v\in E^0 : p_v\in I\}$$
because then necessarily $I\ne 0$ and $I\ne C^*(E)$. 
 To show the above set equality, one needs to verify that $H$ satisfies two important properties in the theory. First, $H$ is a {\it hereditary subset} of $E^0$, meaning that whenever $v\le w$ in $E^0$ and $v\in H$ then also $w\in H$. Second, $H$ is a {\it saturated subset} of $E^0$, meaning that whenever $v\in E^0$ is not a source in $E$ and $s(e)\in H$ for all $e\in r^{-1}(v)$, then $v\in H$. It is easy to check that $H$ satisfies both conditions.

We refer the reader to \cite[Theorem 4.14]{rae} for the detailed proof. 
\qed

\medskip

We obtain that the Cuntz algebra $\mathcal O _n= C^*(R_n)$ (with $n\ge 2$) is simple. However $C^*(R_1) = C(\mathbb T)$ is not simple since the only cycle in $R_1$ does not have an entry.  

\subsection{The monoid of projections}

The  Murray-von Neumann monoid of projections $\V(A)$ of a $C^*$-algebra $A$ is the monoid of Murray-von Neumann equivalence classes of projections $p$ in $M_\infty (A)$, endowed with the operation of orthogonal sum. 

For a ring $R$, the monoid $\V (R)$ is defined as the monoid of equivalence classes of idempotents in $M_{\infty}(R)$, by the relation $e\sim f$ if and only if there are $x,y\in M_{\infty}(R)$ such that $e= xy$ and $f=yx$. The monoid $\V(R)$ can be identified with the monoid of isomorphism classes of finitely generated projective right $R$-modules. In the non-unital case, this must be interpreted as the monoid of isomorphism classes of finitely generated projective $S$-modules $P$ such that $P=PR$, where $S$ is a (fixed) unital ring containing $R$ as a two-sided ideal, see e.g. \cite[Section 5]{Kenlimits}. For a $C^*$-algebra $A$ the two definitions of $\V (A)$ agree by \cite[Section 4.6]{Black}.

For a row-finite graph $E$, one may define a (commutative) monoid $M(E)$ associated to $E$. Concretely, $M(E)$ is the commutative monoid with a family of generators $a_v$, for $v\in E^0$, and the defining relations
\begin{equation*}
	a_v = \sum _{e\in r^{-1}(v)} a_{s(e)}  \qquad (v\in E^0\setminus \Sour (E))
\end{equation*} 

Observe that we always have a well-defined monoid homomorphism
$$\eta_E\colon M(E)\to \V (C^*(E))$$ defined by $\eta_E (a_v) = [p_v]\in \V (C^*(E))$. 

\begin{theorem}{\rm \cite{amp07}}
	\label{thm:APM}
	Let $E$ be a row-finite graph. Then the map $\eta_E$ is a monoid isomorphism.   
\end{theorem}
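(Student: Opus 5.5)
We follow the strategy of Ara--Moreno--Pardo. Well-definedness of $\eta_E$ is immediate: by (CK2), $p_v=\sum_{e\in r^{-1}(v)} s_es_e^*$ with mutually orthogonal summands, and $s_es_e^*\sim s_e^*s_e=p_{s(e)}$ by (CK1). Hence $[p_v]=\sum_e[p_{s(e)}]$ in $\V(C^*(E))$. The real content is bijectivity, which we prove in two stages. First we work with the Leavitt path algebra $L(E)$ and show that the analogous map $M(E)\to \V(L(E))$ is an isomorphism. Then we show that the inclusion $L(E)\to C^*(E)$ induces an isomorphism $\V(L(E))\cong \V(C^*(E))$.

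\textbf{The algebraic step.} We first reduce to finite graphs. Both $M(E)$ and $L(E)$ (hence $\V(L(E))$, since $\V$ commutes with direct limits) are direct limits over suitable finite complete subgraphs. So it suffices to treat $E$ finite, keeping track of sources.

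For finite $E$, we realise $L(E)$ as a universal localization, following Bergman's machinery. Start with the semisimple algebra $A_0=\prod_{v\in E^0}\C$. Adjoin, for each non-source $v$, a universal isomorphism between the projective modules $vA$ and $\bigoplus_{e\in r^{-1}(v)} s(e)A$. Bergman's theorem on universal isomorphisms says that $\V$ of the resulting algebra is obtained from $\V(A_0)=\N^{E^0}$ by imposing exactly the relations $[v]=\sum_e[s(e)]$. This gives $\V(L(E))\cong M(E)$.

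One must check that adjoining these isomorphisms step by step (one vertex at a time, or all at once) yields precisely $L(E)$, including the $*$-structure. This amounts to matching the generators and relations: the edges $e$ and ghost edges $e^*$ give the mutually inverse isomorphisms. We expect this identification to be the main obstacle. If the one-shot version is awkward, we would instead pass through the Cohn path algebra (only (CK1) imposed) and then localize.

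\textbf{The analytic step.} We must show $\V(L(E))\to \V(C^*(E))$ is an isomorphism.

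\emph{Surjectivity.} $C^*(E)$ has an approximate unit of projections $\sum_{v\in F}p_v$, for finite $F\subseteq E^0$. Moreover, by the gauge action and the AF structure of the fixed-point algebra $C^*(E)^{\gamma}$, every projection is equivalent to one lying in the dense $*$-subalgebra $L(E)$. Alternatively, one can invoke the fact that $L(E)$ is closed under holomorphic functional calculus on matrices, or show directly that projections in matrices over $C^*(E)$ are equivalent to sums of $p_v$'s.

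\emph{Injectivity.} We would use the refinement property and separativity of $M(E)$ established in the algebraic step, together with the fact that $L(E)$ is an exchange-type algebra relative to its ideal lattice. Another route is to compare ideals: the graded ideals of $L(E)$ and the gauge-invariant ideals of $C^*(E)$ are both parametrized by the hereditary saturated subsets of $E^0$, using Theorem \ref{thm:gauge-invariant}. An induction over this lattice, in which the quotient by the ideal generated by a hereditary saturated set $H$ is again a graph algebra and the ideal is Morita equivalent to $C^*(E_H)$, then reduces injectivity to the simple case. There Theorem \ref{thm:charac-simplicity} combined with purely infinite/AF dichotomy arguments handles the comparison.
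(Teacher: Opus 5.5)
Your two-stage plan matches the paper exactly. The paper gives no argument of its own: it composes \cite[Theorem 3.5]{amp07}, which gives $M(E)\cong \V(L(E))$, with \cite[Theorem 7.1]{amp07}, which gives $\V(L(E))\cong \V(C^*(E))$ via the inclusion. Your algebraic step is a faithful sketch of the first citation. The obstacle you anticipate there is not real: a universal isomorphism $vA\cong\bigoplus_{e\in r^{-1}(v)}s(e)A$ over $A_0=\prod_{v\in E^0}\C$ is given by a row $(e)_e$ and a column $(e^*)_e$ subject to precisely (E), (CK1) and (CK2). So Bergman's algebra is $L(E)$ on the nose, and the involution plays no role for $\V$.

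The gap is the analytic step. You offer a menu of routes rather than an argument, and the specific claims do not hold up.

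\emph{Holomorphic functional calculus.} The claim that $L(E)$ is closed under it is false. For $E=R_1$ we have $L(E)=\C[z,z^{-1}]\subset C(\mathbb T)$, and $2-z$ is invertible in $C(\mathbb T)$ but not in $L(E)$.

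\emph{Gauge action.} The gauge-action claim is unsupported. The conditional expectation onto the AF core does not send projections to projections; for instance $\frac12\bigl(\begin{smallmatrix}1&z\\ \bar z&1\end{smallmatrix}\bigr)$ is sent to $\frac12 I$. Density of $L(E)$ says nothing about idempotents. Finally, ``every projection in $M_\infty(C^*(E))$ is equivalent to one in the core, or to an orthogonal sum of $p_v$'s'' is just a restatement of the surjectivity you need.

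\emph{Injectivity.} $L(E)$ is not an exchange ring in general; $\C[z,z^{-1}]$ is a non-local domain. The induction over hereditary saturated sets also lacks its key ingredient. $\V$ has no exact sequence or five lemma comparable to that of $K_0$: projections and partial isometries need not lift from $C^*(E)/I_H\cong C^*(E/H)$. Hence knowing the result for $I_H$ (via $C^*(E_H)$) and for the quotient does not give it for $C^*(E)$. You would need an explicit argument that lifts equivalences from the quotient and cancels the resulting error terms from $I_H$.

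\emph{Base cases.} Your reduction ``to the simple case'' is also incomplete. A graph with no nontrivial hereditary saturated subsets need not have a simple $C^*$-algebra: $C^*(R_1)=C(\mathbb T)$, because the loop has no entry.

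As written, the analytic half of the proof is missing. Either supply such an extension argument, or invoke \cite[Theorem 7.1]{amp07} as the paper does.
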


The proof goes through the {\it Leavitt path algebra} $L(E)$ of $E$. Namely, it is first shown in \cite[Theorem 3.5]{amp07} that the natural map $M(E)\to \V (L(E))$ is an isomorphism, and then it is shown in \cite[Theorem 7.1]{amp07} that the natural map $L(E)\to C^*(E)$ induces an isomorphims of $V$-monoids
$V(L(E))\cong \V(C^*(E))$. Obviously the composition of these two monoid isomorphisms is the canonical map
$\eta_E$, which is therefore an isomorphism.  

As a consequence one can get a formula for $K_0(C^*(E))$, although this can also be obtained in a different way \cite[Chapter 7]{rae}. We denote by $A'_E$ the matrix obtained from the adjacency matrix $A_E$ of $E$ by suppressing all the zero rows (the ones corresponding to the sources of $E$). This will be called the {\it reduced} adjacency matrix of $E$. 
We consider $(A_E')^T$ as a map from $\Z^{(E^0\setminus \Sour (E))}$ to $\Z^{(E^0)}$. 

\begin{theorem}{\rm \cite[Chapter 7]{rae}}
	\label{thm:KtheoryofgraphCs}
	Let $E$ be a row-finite graph, and let $A'_E$ be the reduced adjacency matrix of $E$. Then we have
	\begin{enumerate}
		\item[(a)] $K_0 (C^*(E))\cong \coker (I-(A_E')^T)$.
		\item[(b)] $K_1(C^*(E))	\cong \ker (I-(A_E')^T)$. 
	\end{enumerate}	
\end{theorem}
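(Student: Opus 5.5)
The plan is to use the dual Cuntz--Pimsner/gauge-action picture: realize $C^*(E)$ via the crossed product by the gauge action and compute $K$-theory through the AF core. The statement is attributed to \cite[Chapter 7]{rae}, so we follow that route, but we also use Theorem \ref{thm:APM} for part (a) as a sanity check.

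For (a), first observe that $K_0$ of a $C^*$-algebra is the Grothendieck group of $\V(A)$ when $A$ is unital, or more generally when $A$ has an approximate unit of projections, which $C^*(E)$ has (finite sums of $p_v$). Theorem \ref{thm:APM} gives $\V(C^*(E))\cong M(E)$. Since group completion commutes with presentations, $K_0(C^*(E))$ is the abelian group with generators $a_v$, $v\in E^0$, and relations $a_v=\sum_{e\in r^{-1}(v)}a_{s(e)}$ for non-sources $v$. The relation for $v$ reads $a_v-\sum_w A_E(v,w)a_w=0$, which is exactly the image of the basis vector $\delta_v$ under $I-(A'_E)^T\colon \Z^{(E^0\setminus \Sour(E))}\to\Z^{(E^0)}$. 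Here $I$ means the inclusion of the non-source coordinates. Hence the group is $\coker(I-(A'_E)^T)$.

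For (b), this monoid argument gives no information, so we use the gauge action. By Takai duality, $C^*(E)$ is stably isomorphic to $(C^*(E)\rtimes_\gamma\mathbb T)\rtimes_{\hat\gamma}\Z$. Let $F=C^*(E)^\gamma$ be the fixed-point algebra. The crossed product $C^*(E)\rtimes_\gamma\mathbb T$ is identified with the graph algebra of the skew-product graph $E\times_1\Z$, which has no cycles. Hence it is AF with $K_0=\varinjlim \Z^{(E^0)}$ under the maps $A_E^T$ on the non-source part, and $K_1=0$. Concretely, the $K_0$ of the AF algebra is the direct sum over $n\in\Z$ of copies of $\Z^{(E^0)}$ modulo identifications $\delta_{(v,n)}=\sum_e\delta_{(s(e),n+1)}$ for non-sources $v$. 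The dual automorphism $\hat\gamma$ acts as the shift $n\mapsto n+1$.

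Next we apply the Pimsner--Voiculescu sequence to $\hat\gamma$. It gives $K_1(C^*(E))\cong\ker(1-\hat\gamma_*)$ and $K_0(C^*(E))\cong\coker(1-\hat\gamma_*)$ on $K_0$ of the AF algebra. The main step, and the expected obstacle, is the bookkeeping identifying $\ker(1-\hat\gamma_*)$ on this inductive-limit group with $\ker(I-(A'_E)^T)$. We would do this via an explicit resolution: $K_0(E\times_1\Z\text{-algebra})$ is the cokernel of
\[
\bigoplus_{n}\Z^{(E^0\setminus\Sour(E))}\to\bigoplus_n\Z^{(E^0)},\qquad \delta_{(v,n)}\mapsto\delta_{(v,n)}-\sum_{e\in r^{-1}(v)}\delta_{(s(e),n+1)}.
\]
This map is injective because it is "triangular" in $n$. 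Viewing both sides as free $\Z[t,t^{-1}]$-modules, the map is $I-t(A'_E)^T$. Applying $-\otimes_{\Z[t^{\pm1}]}\Z$ at $t=1$ and taking the long exact Tor sequence yields the kernel and cokernel of $1-\hat\gamma_*$ as $\ker(I-(A'_E)^T)$ and $\coker(I-(A'_E)^T)$. This uses that the free modules have no higher Tor.

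This recovers (a) consistently and proves (b). Care is needed with orientation and sign conventions ($s$ versus $r$ in the skew product) and with checking that $\hat\gamma$ shifts in the matching direction. We also need to verify that the resolution is exact, that is, that the relations generate all relations in the inductive limit. That follows from the standard description of $K_0$ of a direct limit of finite-dimensional algebras indexed by vertices.
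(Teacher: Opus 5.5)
Your proposal is correct and follows essentially the route of the cited source \cite[Chapter 7]{rae}: identify $C^*(E)\rtimes_\gamma\mathbb T$ with the AF algebra $C^*(E\times_1\Z)$, then apply Takai duality and the Pimsner--Voiculescu sequence; your derivation of (a) from Theorem \ref{thm:APM} is exactly the remark the paper makes after that theorem. The only difference is cosmetic: you obtain $\ker(1-\hat\gamma_*)$ and $\coker(1-\hat\gamma_*)$ from the injective presentation $I-t(A'_E)^T$ over $\Z[t,t^{-1}]$ and the Tor sequence at $t=1$, which is a tidy substitute for explicit direct-limit bookkeeping and handles sources directly.
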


For instance for the $n$-rose quiver, we have $K_0(\mathcal O _n)\cong \Z/(n-1)\Z$
and $K_1(\mathcal O _n)= 0$. Observe that $K_1( C^*(E) )$ is always a subgroup of a free abelian group, so it is itself a free abelian group. Szymański \cite{szy} proved that every pair $(G_0,G_1)$ consisting of a countable abelian group $G_0$ and a free countable abelian group $G_1$ can be realized as the pair $(K_0(C^*(E)),K_1(C^*(E)))$ for some row-finite graph $E$ such that $C^*(E)$ is a simple and purely infinite $C^*$-algebra. To obtain all possible pairs $(G_0,G_1)$ of countable abelian groups, one can use Katsura algebras, which have also a graphical model, found by Exel and Pardo, see Section \ref{sect:ExelPardo}.

\subsection{The groupoid model}

We now introduce the groupoid picture for graph $C^*$-algebras. These groupoids have been introduced in a certain {\it ad hoc} way in several instances, and this is indeed the case in the initial paper \cite{kprr97}. However the construction of the groupoid can be done from an algebraic structure called an {\it inverse semigroup}, see \cite{Exel08} and \cite{ExelBook} for this approach. This is the most satisfying way to build this structure, but we will restrict to the ad hoc method in these notes. 

We refer the reader to \cite{SimsCRMNotes} for a quick introduction to the theory of étale groupoids. 

A groupoid is a small category with inverses. Although this is the most economical definition, we want to phrase it in more concrete terms. We will deal thus with a set $\mathcal G$, endowed with two maps
$s,r\colon \mathcal G \to \mathcal G^0$, where $\mathcal G^0$ is the set of objects of $\mathcal G$. These maps are called the source and the range of $\mathcal G$. We have a partially defined multiplication 
$$\mu\colon \mathcal G ^{(2)}\to \mathcal G, \qquad \mu (g,h)= gh$$
where $\mathcal G ^{(2)}$ is the set of pairs $(g,h)\in \mathcal G\times \mathcal G$ such that $s(g)= r(h)$, and $s(gh)= s(h)$, $r(gh) = r(g)$. Moreover the product is associative and each ``arrow" $g\in \mathcal G$
is invertible, with $s(g^{-1})= r(g)$, $r(g^{-1})= s(g)$, $gg^{-1}= r(g)$, $g^{-1}g = s(g)$.  

We consider only étale topological groupois, which are groupoids $\mathcal G$ with a locally compact topology
such that the group operations are continuous and the source and range maps $s,r\colon \mathcal G \to \mathcal G^0$ are local homeomorphisms. The étale condition implies that $\mathcal G^0$ is an open subset of $\mathcal G$ \cite[Lemma 8.4.2]{SimsCRMNotes}. 

It is always assumed that $\mathcal G^0$ is a (locally compact) Hausdorff space, but it is not assumed that the groupoid itself is Hausdorff. Indeed in some interesting situations, the groupoid $\mathcal G$ is not Hausdorff. 

An étale groupoid is said to be {\it ample} if it has a basis of compact open sets. All groupoids appearing in these notes will be ample groupoids. 

Given a Hausdorff ample groupoid $\mathcal G$, one can define a convolution product and an involution on the algebra $C_c(\mathcal G)$ of compactly supported continuous complex functions:
$$(f \ast g ) (x)= \sum_{yz=x} f(y)g(z),\qquad (f^*)(x) = \ol{f(x^{-1})}.$$ The universal $C^*$-algebra associated to $C_c(\mathcal G)$ is called the groupoid $C^*$-algebra of $\mathcal G$, and it is denoted by $C^*(\mathcal G)$. A reduced $C^*$-algebra $C_r^* (\mathcal G)$ can also be defined, see \cite{SimsCRMNotes}.

\begin{example}
	\label{exam:transformation-groupoid}
	Let $G$ be a discrete group acting by homeomorphisms on a compact set $X$. Then the {\it transformation groupoid} associated to the action is the groupoid 
	$\mathcal G _{G,X}$ whose underlying set is  $G\times X$, with unit space $\{e\}\times X\cong X$, and with  $s(g,x) = x$, $r(g,x)= gx$, and
	$$(h,gx) (g,x) = (hg,x),\qquad (g,x)^{-1} = (g^{-1}, gx) .$$
\end{example}

The groupoid $C^*$-algebra associated to the transformation groupoid of an action of $G$ on $X$ as above is isomorphic with an important structure in the theory of $C^*$-algebras, called the {\it crossed product}.
First we observe that if $\alpha$ is an action of $G$ on $X$ as above, then $\alpha$ induces an action of $G$ on $C(X)$ by $*$-automorphisms by $\alpha_\gamma (f)(x)= f(\alpha_\gamma^{-1}(x))$ for $\gamma\in G$, $f\in C(X)$ and $x\in X$. 
Now  there is an {\it algebraic} crossed product algebra $C(X)\alrtimes_\alpha G$, which is the linear span of elements of the form $f\delta_\gamma$, where $f\in C(X)$ and $\gamma \in G$, with the sum defined componentwise and the product and involution defined by
\begin{equation}
	\label{eq:crossedprod-operations}
	(f\delta_\gamma )(g\delta_\mu) = (f\alpha_\gamma (g)) (\delta _{\gamma \mu}),\qquad (f\delta_\gamma)^* = \alpha_{\gamma^{-1}} (f^*)\delta_{\gamma^{-1}}.
\end{equation}
The crossed product $C(X)\rtimes_{\alpha} G$ is then the enveloping $C^*$-algebra of the algebraic crossed product  $C(X)\alrtimes _{\alpha} G$. 

\begin{theorem}
	\label{thm:cross-product}
	Let $G$ be a discrete group acting by homeomorphisms on a compact set $X$. Then 
	$$C^*(\mathcal G_{G,X}) \cong C(X)\rtimes_\alpha G.$$
\end{theorem}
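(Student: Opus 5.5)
I would prove the isomorphism by identifying the convolution $*$-algebra $C_c(\mathcal G_{G,X})$ with the algebraic crossed product $C(X)\alrtimes_\alpha G$, and then comparing universal properties. Since both $C^*$-algebras are enveloping $C^*$-algebras, a $*$-isomorphism at the algebraic level gives the result directly.

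\emph{Algebraic identification.} Because $G$ is discrete, $\mathcal G_{G,X}=G\times X$ is the disjoint union of the compact open bisections $\{\gamma\}\times X$. Hence every $F\in C_c(\mathcal G_{G,X})$ is supported on finitely many of these sets, and can be written uniquely as $F=\sum_\gamma F_\gamma$ with $F_\gamma$ supported on $\{\gamma\}\times X$. The natural assignment uses the range map, since $\alpha_\gamma(f)(x)=f(\gamma^{-1}x)$ carries the ``range'' information. I define
$$\Phi(f\delta_\gamma)(\gamma,x)=f(\gamma x),$$
with $\Phi(f\delta_\gamma)$ vanishing off $\{\gamma\}\times X$. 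This is a linear bijection $C(X)\alrtimes_\alpha G\to C_c(\mathcal G_{G,X})$.

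\emph{Product.} I check that $\Phi$ respects products using the formula $(F*H)(\gamma\mu,x)=F(\gamma,\mu x)H(\mu,x)$, which holds because the only factorization of $(\gamma\mu,x)$ with first factor in $\{\gamma\}\times X$ is $(\gamma,\mu x)(\mu,x)$. Then
$$(\Phi(f\delta_\gamma)*\Phi(g\delta_\mu))(\gamma\mu,x)=f(\gamma\mu x)\,g(\mu x).$$
On the other side, $(f\alpha_\gamma(g))(\gamma\mu x)=f(\gamma\mu x)\,g(\mu x)$, so the two agree.

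\emph{Involution.} I have $\Phi(f\delta_\gamma)^*(\gamma^{-1},y)=\overline{f(\gamma\gamma^{-1}y)}=\overline{f(y)}$, using $(\gamma^{-1},y)^{-1}=(\gamma,\gamma^{-1}y)$. The element $\alpha_{\gamma^{-1}}(f^*)\delta_{\gamma^{-1}}$ is sent to $y\mapsto \overline{f(\gamma\gamma^{-1}y)}=\overline{f(y)}$ as well. So $\Phi$ is a $*$-isomorphism of $*$-algebras.

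\emph{Passing to the completions.} By definition, $C^*(\mathcal G_{G,X})$ is the enveloping $C^*$-algebra of $C_c(\mathcal G_{G,X})$, and $C(X)\rtimes_\alpha G$ is the enveloping $C^*$-algebra of $C(X)\alrtimes_\alpha G$. A $*$-isomorphism between the dense $*$-algebras induces a bijection between their $*$-representations, so the universal seminorms correspond and the completions are isomorphic.

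\emph{Main obstacle.} The only delicate point is that the enveloping $C^*$-algebras exist, i.e.\ that every $*$-representation is bounded on each generator. For the crossed product this is clear: $\delta_\gamma$ maps to a unitary (the space is compact, so the unit is available), and $f$ goes through a representation of the $C^*$-algebra $C(X)$, so $\|\pi(f\delta_\gamma)\|\le\|f\|_\infty$. This bound transfers to the groupoid side through $\Phi$. With the conventions checked above, the remaining verifications are routine bookkeeping.
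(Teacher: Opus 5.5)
Your proof is correct and is the argument the paper points to via Sims' notes: a $*$-isomorphism $C_c(\mathcal G_{G,X})\cong C(X)\alrtimes_\alpha G$, followed by functoriality of enveloping $C^*$-algebras. Your range-evaluation $\Phi(f\delta_\gamma)(\gamma,x)=f(\gamma x)$ is in fact the same map as the paper's source-evaluation $\omega(F)(\gamma)(x)=F(\gamma,x)$ once elements are written with coefficients on the right, since $\Phi(\delta_\gamma h)=\Phi(\alpha_\gamma(h)\delta_\gamma)$ is $(\gamma,x)\mapsto h(x)$; one small precision is that for a possibly degenerate representation $\pi$ the element $\pi(1\delta_\gamma)$ is only a partial isometry with initial and final projection $\pi(1\delta_1)$, not a unitary, but this still gives your bound $\|\pi(f\delta_\gamma)\|\le\|f\|_\infty$.
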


Thinking of the algebraic crossed product $C(X)\alrtimes G$ as the algebra $C_c(G, C(X))$, the isomorphism in Theorem \ref{thm:cross-product} is induced by an isomorphism $\omega\colon C_c (\mathcal G_{G,X})\to C_c(G,C(X))$ given by
$$\omega (f)(\gamma) (x)= f(\gamma,x),\qquad (f\in C_c(\mathcal G), \gamma \in G, x\in X)\, ,$$
see \cite[Example 9.1.7]{SimsCRMNotes}. 

We can now give the definition of the groupoid $\mathcal G _E$ associated to a graph $E$. We will assume that $E$ has no sources. Let $E^{\infty}$ be the set of all the infinite paths $e_1e_2\cdots $ on $E$. Given $x= e_1e_2\cdots \in E^{\infty}$ define
$$\sigma (x) =  \sigma (e_1e_2e_3 \cdots ) = e_2e_3\cdots \in E^{\infty}.$$ 

\begin{definition}
	\label{def:graph-groupoid}
	Let $E$ be a row-finite graph without sources.  The graph groupoid $\mathcal G_E$ is given by
	$$\mathcal G_E = \{(x,k-l,y): x,y\in E^{\infty}, k,l\in \Z^+, \sigma^k(x) = \sigma^l (y)\}.$$
	The triples $(x,n,y)$ and $(y',m,z)$ are composable if and only if $y=y'$ and in this case
	$$(x,n,y)(y,m,z) = (x,n+m,z).$$
	The inverse of $(x,n,y)$ is $(x,n,y)^{-1} = (y,-n,x)$. The space of units is $E^{\infty}$ identified with the set of triples $(x,0,x)$.
	
	A basis of compact open subsets of $E^\infty$ is given by all the {\it cylinder sets} $\mathcal Z (\mu)$, where $\mu$ is a finite path and 
	$$\mathcal Z (\mu) = \{ x\in E^{\infty} : x= \mu y \text{ for some } y\in E^\infty\}$$
	is the set of all infinite paths ending in $\mu$.

	The topology of $\mathcal G_E$ is generated by the cylinder sets $\mathcal Z (\mu,\nu)$ given by
	$$\mathcal Z (\mu,\nu) = \{ (x,|\mu| - |\nu|,y) \in \mathcal G_E : x\in \mathcal Z (\mu), y\in \mathcal Z (\nu)\}.$$
\end{definition}

\begin{theorem}
	\label{thm:isomorphism-groupoid-graph}
	Let $E$ be a row-finite graph without sources. Then there is an isomorphism
	$$C^*(E)  \to C^*(\mathcal G_E)$$
	sending $p_v$ to $1_{\mathcal Z (v)}$ and $s_e$ to $1_{\mathcal Z (e,s(e))}$. 
	The graph groupoid $\mathcal G_E$ is always a Hausdorff groupoid. 
\end{theorem}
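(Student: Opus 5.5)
The plan is to build a Cuntz--Krieger $E$-family inside $C^*(\mathcal G_E)$, invoke the universal property of $C^*(E)$ to get a homomorphism $\pi\colon C^*(E)\to C^*(\mathcal G_E)$, prove injectivity with the gauge-invariant uniqueness theorem (Theorem \ref{thm:gauge-invariant}), and prove surjectivity by a spanning argument. Hausdorffness of $\mathcal G_E$ is settled first, so that $C_c(\mathcal G_E)$ and its convolution make sense as in the paper.

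\textbf{Hausdorff and ample.} Take $(x,n,y)\neq(x',n',y')$. If $x\ne x'$ or $y\ne y'$, choose disjoint cylinders $\mathcal Z(\mu)\ni x$, $\mathcal Z(\mu')\ni x'$ of equal length. Choose long enough $\nu,\nu'$ for the source coordinates, with $|\mu|-|\nu|=n$ and $|\mu'|-|\nu'|=n'$. Then $\mathcal Z(\mu,\nu)$ and $\mathcal Z(\mu',\nu')$ are disjoint. If $x=x'$ and $y=y'$ but $n\ne n'$, the integer coordinate already separates the two open sets. Each $\mathcal Z(\mu,\nu)$ is a compact open bisection, since $r$ and $s$ restrict to homeomorphisms onto $\mathcal Z(\mu)$ and $\mathcal Z(\nu)$, via $(\mu z,|\mu|-|\nu|,\nu z)\mapsto \mu z$, and $E^\infty$ is locally compact because $E$ is row-finite. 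Hence $\mathcal G_E$ is an ample Hausdorff groupoid.

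\textbf{The family.} Set $P_v=1_{\mathcal Z(v)}$ and $S_e=1_{\mathcal Z(e,s(e))}$. For compact open bisections $U,V$ one has $1_U*1_V=1_{UV}$ and $1_U^*=1_{U^{-1}}$.
\begin{enumerate}
\item[(i)] The $P_v$ are mutually orthogonal projections, because the cylinders $\mathcal Z(v)$ are disjoint.
\item[(ii)] (CK1) holds: $\mathcal Z(e,s(e))^{-1}\mathcal Z(e,s(e))$ is the unit set $\mathcal Z(s(e))$.
\item[(iii)] (CK2) holds: $\mathcal Z(e,s(e))\mathcal Z(e,s(e))^{-1}=\mathcal Z(e)$, and $\mathcal Z(v)$ is the finite disjoint union of the sets $\mathcal Z(e)$ over $e\in r^{-1}(v)$. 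This uses row-finiteness and the absence of sources.
\end{enumerate}
The universal property then gives $\pi$. For surjectivity, note that $\pi(s_\mu s_\nu^*)=1_{\mathcal Z(\mu,\nu)}$. The sets $\mathcal Z(\mu,\nu)$ form a basis of compact open bisections closed under finite intersections, since two such sets are either nested after extending paths or disjoint. Consequently every element of $C_c(\mathcal G_E)$ is a finite linear combination of their indicator functions, so the range of $\pi$ is dense and therefore all of $C^*(\mathcal G_E)$.

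\textbf{Injectivity.} Define the cocycle $c\colon\mathcal G_E\to\Z$ by $c(x,n,y)=n$. It is a continuous homomorphism, so $\beta_z(f)(g)=z^{c(g)}f(g)$ defines automorphisms of $C_c(\mathcal G_E)$ that extend to a strongly continuous $\mathbb T$-action on $C^*(\mathcal G_E)$. Continuity follows from density together with $\|\beta_z\|\le1$. Since $\beta_z\circ\pi$ and $\pi\circ\gamma_z$ agree on generators, $\pi$ is equivariant. Each $P_v\neq 0$ because $\mathcal Z(v)\neq\emptyset$ (no sources), so Theorem \ref{thm:gauge-invariant} yields injectivity.

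The main obstacle I expect is the analytic bookkeeping: checking that $\beta$ really extends continuously to the full groupoid $C^*$-algebra, and that the $\mathcal Z(\mu,\nu)$ span $C_c(\mathcal G_E)$. For the first, I would use that $\beta_z$ preserves the universal norm, since it transports representations of $C_c(\mathcal G_E)$ to representations. For the second, I would use a partition-of-unity argument on the compact support, refining it by cylinders of a common length.
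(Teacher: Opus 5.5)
The paper gives no proof of this theorem; it only records the groupoid model from the literature (going back to Kumjian, Pask, Raeburn and Renault). Your outline is the standard argument: build the family $1_{\mathcal Z(v)}$, $1_{\mathcal Z(e,s(e))}$, use the universal property, get injectivity from Theorem \ref{thm:gauge-invariant} via the $\mathbb T$-action coming from the cocycle $c(x,n,y)=n$, and get surjectivity by density. You are also right to use the common-tail sets $\mathcal Z(\mu,\nu)=\{(\mu z,|\mu|-|\nu|,\nu z): z\in\mathcal Z(s(\mu))\}$. The displayed definition in the paper, read literally, drops the condition $\sigma^{|\mu|}(x)=\sigma^{|\nu|}(y)$ and does not give bisections: for $E=R_2$, $\mu=x_1$, $\nu=v$ it contains both $(x_1x_1\cdots,1,x_1x_1\cdots)$ and $(x_1x_1\cdots,1,x_2x_1x_1\cdots)$.

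However, the surjectivity step rests on a false claim. It is not true that every element of $C_c(\mathcal G_E)$ is a finite linear combination of the functions $1_{\mathcal Z(\mu,\nu)}$. Those combinations are exactly the locally constant compactly supported functions (the Steinberg algebra $A(\mathcal G_E)$). For $E=R_2$, a continuous but not locally constant function on the Cantor set $E^\infty=\mathcal G_E^0$ is not of this form, and a partition of unity does not change that. What is true, and what you need, is density in the $C^*$-norm:
\begin{enumerate}
\item[(a)] Cover the support of $f\in C_c(\mathcal G_E)$ by finitely many basic sets and keep the maximal ones. These are pairwise disjoint compact open (hence clopen) bisections $U_1,\dots,U_k$, so $f=\sum_i f\cdot 1_{U_i}$ with continuous summands.
\item[(b)] For $h$ supported in a compact open bisection $U$, $h^* * h=|h|^2\circ(s|_U)^{-1}$ lies in the commutative $C^*$-algebra $C(s(U))$. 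Its inclusion into $C^*(\mathcal G_E)$ is contractive, so $\|h\|_{C^*(\mathcal G_E)}\le\|h\|_\infty$.
\item[(c)] Writing $U_i=\mathcal Z(\mu,\nu)$, uniform continuity shows that for large $N$ the function $f\cdot 1_{U_i}$ is within $\varepsilon$ in sup norm of a combination of the $1_{\mathcal Z(\mu\alpha,\nu\alpha)}$ with $|\alpha|=N$. This is where your refinement by cylinders of a common length belongs.
\end{enumerate}
Density of the range of $\pi$ follows.

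A few smaller points should also be filled in.
\begin{enumerate}
\item[(i)] Your Hausdorff argument only separates range coordinates, so it misses the case $x=x'$, $y\ne y'$. Run the symmetric argument, or simply note that $g\mapsto(r(g),c(g),s(g))$ is a continuous injection of $\mathcal G_E$ into the Hausdorff space $E^\infty\times\mathbb Z\times E^\infty$.
\item[(ii)] $\mathcal Z(v)\ne\emptyset$ gives $P_v\ne 0$ only once you know that nonzero elements of $C_c(\mathcal G_E)$ survive in $C^*(\mathcal G_E)$. For example, in the regular representation on $\ell^2(s^{-1}(x))$ with $x\in\mathcal Z(v)$ one has $\langle\lambda_x(1_{\mathcal Z(v)})\delta_x,\delta_x\rangle=1$.
\item[(iii)] Strong continuity of $\beta$ needs continuity on the dense subalgebra first. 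This holds because $c$ is locally constant, so $\beta_z(f)=\sum_n z^n f\cdot 1_{c^{-1}(n)}$ is a finite sum for $f\in C_c(\mathcal G_E)$.
\item[(iv)] Continuity of multiplication and inversion in $\mathcal G_E$ is routine on basic sets, but it is part of the claim that $\mathcal G_E$ is an \'etale groupoid.
\end{enumerate}
With these repairs your argument proves the theorem.
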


\begin{example}
	The {\it Cuntz groupoid} $\mathcal G_n$ is the groupoid associated to the Cuntz algebra $\mathcal O_n$.
	Set $E=R_n$. Then $E^\infty = \{x_1,\dots , x_n\}^{\N}$, with the product topology, and the generators $x_i$ of $\mathcal O_n$ are identified with the characteristic functions of the cylinder sets 
	$$\mathcal Z_i = \{(x_i \gamma , 1,  \gamma ) :  \gamma \in E^\infty\}.$$
	Observe that 
	$$1_{\mathcal Z_i}^*1_{\mathcal Z_j} = 1_{\mathcal Z (v,-1,x_i)}1_{\mathcal Z (x_j,1,v)} = \delta_{i,j} 1_{\mathcal Z(v)} = 1$$
	and 
	$$\sum_{i=1}^n 1_{\mathcal Z_i}1_{\mathcal Z_i}^* = \sum_{i=1}^n 1_{\mathcal Z (x_i)} = 1.$$  
\end{example}

Recall that a {\it bisection} of a groupoid $\mathcal G$ is a subset $B$ of $\mathcal G $ such that the restrictions of both $s$ and $r$ to $B$ are injective.

One can introduce a {\it type semigroup} associated to an ample Hausdorff groupoid $\mathcal G$. This is the commutative semigroup $\Typ (\mathcal G)$ generated by classes $[U]$ of open compact subsets $U$ of $\mathcal G^0$ subject to the relations:
\begin{enumerate}
	\item $ [\emptyset] = 0$, 
	\item $[U\cup V] = [U]+ [V]$ if $U\cap V=\emptyset$,
	\item $[r(U)]= [s(U)]$ if $U$ is an open compact bisection of $\mathcal G$. 
\end{enumerate}

There is a well-defined monoid homomorphism
$$\mathfrak{t}_{\mathcal G} \colon \Typ (\mathcal G) \to \V (C^*(\mathcal G))$$
defined by $\mathfrak{t}_{\mathcal G} ([U]) = [1_U]$ for each open compact subset $U$ of $\mathcal G^0$.
To see that it is well-defined one has to check the preservation of (1)-(3) in the definition of the type semigroup. (1) and (2) are clearly preserved. For (3), suppose that $U$ is an open compact bisection of $\mathcal G$. Since $1_A1_B= 1_{AB}$ for every pair of open compact bisections $A,B$ of $\mathcal G$, we have
$$1_U1_U^* = 1_{UU^{-1}}= 1_{r(U)},\qquad 1_U^* 1_U = 1_{U^{-1}U} = 1_{s(U)}.$$
Hence $1_{r(U)}\sim 1_{s(U)}$ and thus (3) is preserved as well. 

It is shown in \cite{KMP25} that $\Typ (\mathcal G_E )\cong M(E)$ for all row-finite graphs $E$, with $ [\mathcal Z (v)]$ corresponding to $a_v$ for each $v\in E^0$. See also \eqref{eq:type-semigroup-ssactions} below. Combining this fact with Theorem \ref{thm:APM} one obtains that the natural map
$$ \mathfrak{t}_{\mathcal G_E} \colon \Typ (\mathcal G_E) \to \V (C^*(E))$$
is an isomorphism for each row-finite graph without sources $E$.

\section{Exel-Pardo $C^*$-algebras}
\label{sect:ExelPardo}

We will follow \cite{ep17} to introduce Exel-Pardo $C^*$-algebras. These algebras generalize the $C^*$-algebras associated to self-similar groups \cite{Nekra}, and also the $C^*$-algebras introduced by Katsura in \cite{kat08}.

We refer the reader to \cite{Pardo} for a recent overview on self-similar graphs and their associated algebras. 

Let $G$ be a countable discrete group and let $E$ be a finite graph without sources. 

Let $\sigma \colon G \to \Aut (E)$ be an action of $G$ on $E$, that is, $g\mapsto \sigma_g$ is a group homomorphism from $G$ into the group of graph automorphisms of $E$. Hence $\sigma_g (s(e))= s(\sigma_g(e))$ and $\sigma_g (r(e)) = r(\sigma_g (e))$ for all $g\in G$ and $e\in E^1$.

Let $\varphi\colon G\times E^1\to G$ be a one-cocycle for $\sigma$, that is,
\begin{equation}
	\label{eq:1-cocycle}
	\varphi (gh,e) = \varphi (g,\sigma_h(e))\varphi (h,e)
\end{equation}
for all $g,h\in G$ and all $e\in E^1$, which additionally satisfies
\begin{equation}
	\label{eq:additiona-self-sim}
	\sigma_{\varphi (g,e)}(s(e)) = \sigma _g (s(e)) \qquad \forall g\in G,\quad \forall e\in E^1. 
\end{equation}


\begin{remark}
	\label{rem:forEP-relations}
	By setting $g=h=1$ in \eqref{eq:1-cocycle} one gets 
	\begin{equation}
		\label{eq:one-in-one-cocycle}
		\varphi (1,e)= 1 \qquad \forall e\in E^1.
	\end{equation}
\end{remark}

The pair $(\sigma, \varphi)$ may be extended to a corresponding pair $(\sigma^*,\varphi^*)$ on the set $E^*$ of finite paths on $E$. This extension satisfies the following key relations (see \cite[Proposition 2.4]{ep17}):
\begin{equation}
	\label{eq:key-for-extended1}
	\sigma_g^* (\alpha\beta) = \sigma_g^* (\alpha)\sigma^* _{\varphi^* (g,\alpha)}(\beta) 
\end{equation}
\begin{equation}
	\label{eq:key-for-extended2}
	\varphi^* (g,\alpha\beta) = \varphi^* (\varphi^* (g,\alpha),\beta).
\end{equation}

From now on, we denote by $g\alpha$ the action $\sigma^*_g(\alpha)$ for a path $\alpha$. We also write $g|_{\alpha}$ instead of $\varphi^* (g,\alpha)$. Relations \eqref{eq:key-for-extended1} and \eqref{eq:key-for-extended2} become:
\begin{equation}
	\label{eq:key-for-extendedNewNot1}
	g (\alpha\beta) = (g \alpha) (g|_\alpha \beta) 
\end{equation}
\begin{equation}
	\label{eq:key-for-extendedNewNot2}
 g|_{\alpha\beta} = (g|_\alpha)|_\beta.
\end{equation}
The action of $G$ on $E$ can also be extended to the space $E^{\infty}$ of infinite paths on $E$. 

\begin{definition}
	The $C^*$-algebra $\mathcal  O _{G,E}$ associated to the 
	self-similar graph $(G,E,\varphi)$ is  the universal $C^*$-algebra generated by a set   
	$$\{ p_v : v\in E^0 \}\cup \{ s_e : e\in E^1\} \cup \{ u_g : g\in G\}$$
	such that
	\begin{enumerate}
		\item $\{ p_v : v\in E^0 \}\cup \{ s_e : e\in E^1\}$ is a Cuntz-Krieger $E$-family,
		\item $u\colon G\to \mathcal O _{G,E}$, $g\mapsto u_g$, is a unitary representation of $G$,
		\item $u_gs_e = s_{ge} u_{g|_e}$ for all $g\in G$ and all $e\in E^1$,
		\item $ u_gp_v = p_{gv} u_g$ for all $g\in G$ and $v\in E^0$. 
	\end{enumerate} 
\end{definition}

The $C^*$-algebra $\mathcal O_{G,E}$ is the closed linear span of the set $\mathcal S$ of elements of the form
$$s_\alpha u_g s_\beta^* $$
such that $\alpha,\beta \in E^*$, $g\in G$ and $s(\alpha) = gs(\beta)$, see \cite[Proposition 3.9]{ep17}.

\begin{definition}
	\label{def:pseudo-free}
	We say that $(G,E,\varphi)$ is {\it pseudo-free} if whenever $g\in G$, $e\in E^1$, $ge= e$ and $\varphi (g,e)= 1$ we have $g=1$.  
\end{definition}

The natural map $C^*(E)\to \mathcal O _{G,E}$ is always injective, and the natural map $C^*(G)\to \mathcal O _{G,E}$ is injective whenever $(G,E)$ is pseudo-free \cite[Propositions 11.1 and  11.8]{ep17}.

Conditions for simplicity of $\mathcal O _{G,E}$ are given in \cite[Theorem 16.1]{ep17}. Very roughly, these conditions are $G$-equivariant versions of the conditions for simplicity of $C^*(E)$, given in  Theorem \ref{thm:charac-simplicity}.

An interesting fact is that one can describe an ample groupoid $\mathcal G_{G,E}$ so that
$$\mathcal O _{G,E}\cong C^*(\mathcal G_{G,E}).$$
The groupoid $\mathcal G_{G,E}$ resembles the groupoid $\mathcal G_E$ associated to a directed graph, but of course incorporating the group $G$ in the picture. We refer the reader to \cite{ep17} for full details of the construction.  

Let us give here a description of the groupoid $\mathcal G_{G,E}$ in the case where $(G,E)$ is pseudo-free, following \cite[Proposition 8.6]{ep17}. First define a set $\mathcal S_{G,E}$ by
$$\mathcal S_{G,E} = \{ (\alpha, g ,\beta) \in E^*\times G\times E^* : s(\alpha)= g \, s(\beta)\}.$$
The elements of $\mathcal S_{G,E}$ are meant to model the elements $s_{\alpha}u_gs_{\beta}^*$ of the algebra $\mathcal O_{G,E}$. As such, a product and an involution can be defined on $\mathcal S_{G,E}$ that make it an {\it inverse semigroup}, see \cite{ep17}.

Now define an equivalence relation $\sim$ on the set of elements $(\alpha,g, \beta;\xi)$, where $(\alpha,g,\beta)\in \mathcal S_{G,E}$ and $\xi \in E^{\infty}$ is such that $\xi \in \mathcal Z (\beta)$ as follows. Let $(\alpha_i,g_i,\beta_i; \eta_i)$, $i=1,2$, two quadruples as before, and assume that $|\beta_1| \le |\beta_2|$. Then $(\alpha_1,g_1,\beta_1; \eta_1)\sim (\alpha_2,g_2,\beta_2; \eta_2)$ if and only if there exists a finite path $\gamma $ and an infinite path $\xi$ such that
\begin{enumerate}
	\item $\alpha_2= \alpha _1 g_1 \gamma $,
	\item $g_2 = g_1|_{\gamma}$,
	\item $\beta_2 = \beta_1 \gamma $,
	\item $\eta_1 = \eta_2 = \beta_1 \gamma \xi $. 
\end{enumerate}

The set of equivalence classes, denoted by $[\alpha,g,\beta; \xi]$, can be endowed with a structure of ample groupoid,
which is denoted by $\mathcal G _{G,E}$ \cite{ep17}. We have $\mathcal G_{G,E}^0 = E^{\infty}$, and
$$s([\alpha, g,\beta; \beta \gamma]) = \beta \gamma , \qquad r([\alpha,g,\beta; \beta \gamma]) = \alpha g \gamma .$$
Since we are assuming that $(G,E)$ is pseudo-free, the groupoid $\mathcal G_{G,E}$ is Hausdorff \cite[Proposition 12.1]{ep17}. However there are important cases where the associated groupoid $\mathcal G_{G,E}$, constructed in \cite{ep17}, is not Hausdorff (and thus the self-similar action is not pseudo-free). 
This is the case for the famous Grigorchuk self-similar group, which was the first finitely generated group of intermediate growth. In \cite{CEPSS}, the authors make an extensive study of the $C^*$-algebra $\mathcal O _{G,E}$ associated to the Grigorchuk self-similar group $(G,E)$.

The following are key examples of the theory.

\begin{example}
	\label{exam:self-similar-group}
	Let $G$ be a group and $X$ a finite set. A  faithful (level preserving) action of $G$ on the set $X^*$ of finite words on $X$ is a {\it self-similar action} if for each $g\in G$ and $x\in X$ there is
	$g|_x\in G$ such that $g(xw) = g(x)g|_x (w)$ for all $w\in X^*$. This can be interpreted as a self-similar action of $G$ on the $n$-rose graph $R_n$.  The algebras associated to self-similar actions have been studied by Nekrashevych \cite{Nekra}, amongst others. 
\end{example}

\begin{example}
	\label{exam:Katsura-algebras} Fix a positive integer $N$. Let $A\in M_{N\times N} (\Z^+)$ be a non-negative square matrix, and let $B\in M_{N\times N}(\Z)$ be a square matrix. We will assume that $A$ has no zero rows and that $A_{ij}=0\implies B_{ij}=0$. Let $E$ be the directed graph with $E^0=\{1,\dots ,N\}$ and with $A_{ij}$ edges from vertex $j$ to vertex $i$. We fix the notation 
	$$\{ e_{ijn} : 1\le i,j\le N, 0\le n < A_{ij}\}$$
	for the set of edges from vertex $j$ to vertex $i$.
	
	We define an action $\sigma$ of $\mathbb Z$ on $E$ as follows. The action of $\mathbb Z$ on $E^0$ is trivial. Let $m\in \Z$, and let $e_{ijn}$ be an edge of $E$. Performing the euclidean division of $n+mB_{ij}$ by $A_{ij}$ we get unique integers $\hat{k},\hat{n}$ such that  
	$$n+mB_{ij} = \hat{k}A_{ij} + \hat{n},\qquad 0\le \hat{n} <A_{ij}.$$
	Then we set $\sigma_m (e_{ijn}) = e_{ij\hat{n}}$ and $\varphi (m,e_{ijn}) = \hat{k}$. The corresponding $C^*$-algebra $\mathcal O_{\Z,E}$ coincides with Katsura algebra $\mathcal O _{A,B}$ built in \cite{kat08}. 
\end{example}

Katsura algebras (allowing $N$ to be $\omega$), realize all Kirchberg algebras (separable, nuclear, simple, purely infinite and with UCT) by a result of Katsura \cite{kat08}. Indeed, Katsura obtained in \cite{kat08} the following formulas for $K_0$ and $K_1$:
$$K_0 (\mathcal O_{A,B}) \cong \coker (I-A^T) \oplus \ker (I-B^T),$$
$$K_1 (\mathcal O_{A,B}) \cong \coker (I-B^T) \oplus \ker (I-A^T).$$
See also \cite{MS}.

Using Katsura algebras, we will see an example where the canonical map $\mathfrak{t}_{\mathcal G} \colon \Typ (\mathcal G)\to \V (C^*(\mathcal G))$ defined in Section \ref{sect:graphs} is not surjective.
First we notice a recent result of Kwaśniewski, Meyer and Prasad \cite{KMP25}, concerning the type semigroup $\Typ (\mathcal G_{G,E})$ associated to a self-similar graph $(G,E)$. In \cite[Corollary 7.7]{KMP25} the authors show that the type semigroup of the groupoid 
$\mathcal G _{G,E}$ is precisely the monoid of coinvariants $M(E)_G$ of the graph monoid $M(E)$ of $E$. 
First observe that $G$ acts on $M(E)$ by $g\cdot a_v = a_{gv}$. To see that the action is well-defined, we need to check that this rule defines a monoid homomorphism of $M(E)$ into itself. For this purpose, take a vertex $v\in E^0$ and $g\in G$. Then we need to check that the relation
$$a_{gv} = \sum_{e\in r^{-1}(v)} a_{gs(e)}$$
holds in $M(E)$. However since $g$ acts by automorphisms on $E$, we have that 
$r^{-1} (gv) = \{ ge: e\in r^{-1}(v)\}$ and thus
$$\sum_{e\in r^{-1}(v)} a_{gs(e)} = \sum_{f\in r^{-1}(gv)} a_{s(f)} = a_{gv},$$
as desired. We can now consider the monoid of coinvariants $M(E)_G$ for this action, which is the quotient of $M(E)$ by the congruence generated by all the pairs $(z, g\cdot z)$ for all $z\in M(E)$. 

In the following lemma, we find a convenient presentation of the monoid $M(E)_G$, which is indeed a graph monoid. The graph $E_G$ appearing in Lemma \ref{lem:coinvariants} has been first constructed by Larki in \cite{Larki}.

\begin{lemma}
	\label{lem:coinvariants}
	Let $\Omega$ be a complete set of representatives for the $G$-action on $E^0$.
	Consider the graph $E_G$ such that $E_G^0=E^0/G$ and $E_G^1 = \bigsqcup_{v\in \Omega} r^{-1} (v)$, with $r_G(e)=[r(e)]$ and $s_G(e) = [s(e)]$ for all $e\in E_G^1$. Then $M(E)_G\cong M(E_G)$ and therefore $M(E)_G$ is a graph monoid.
\end{lemma}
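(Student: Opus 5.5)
Both monoids are given by generators and relations, so I will prove the isomorphism by writing down mutually inverse homomorphisms and checking, in each direction, that the defining relations are respected. First I fix a presentation of $M(E)_G$. It is the commutative monoid generated by symbols $a_v$ ($v\in E^0$), subject to the graph relations $a_v=\sum_{e\in r^{-1}(v)}a_{s(e)}$ for $v\notin\Sour(E)$ and to $a_v=a_{gv}$ for all $g\in G$, $v\in E^0$.

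To justify this, note that the congruence generated by the pairs $(z,g\cdot z)$ with $z\in M(E)$ equals the congruence generated by the pairs $(a_v,a_{gv})$. Indeed, every $z$ is a finite sum of generators, and $g$ acts additively, so $(z,g\cdot z)$ is a sum of generating pairs $(a_v,a_{gv})$. Conversely, each $(a_v,a_{gv})$ is one of the pairs $(z,g\cdot z)$.

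Next I define $\Phi\colon M(E_G)\to M(E)_G$ by $\Phi(a_{[v]})=[a_v]$. This is well defined on generators precisely because of the relation $a_v=a_{gv}$.

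\begin{itemize}
\item \emph{Sources match.} The vertex $[v]$ is a source of $E_G$ iff $r^{-1}(w)=\emptyset$ for the representative $w\in\Omega$ of its orbit. Since $G$ acts by graph automorphisms and $r^{-1}(gw)=g\,r^{-1}(w)$, this holds iff $v$ is a source of $E$.
\item \emph{Relations are preserved.} For a non-source $[v]$ with representative $w\in\Omega$, the relation in $M(E_G)$ is $a_{[w]}=\sum_{e\in r^{-1}(w)}a_{[s(e)]}$. Its image $[a_w]=\sum_{e\in r^{-1}(w)}[a_{s(e)}]$ is exactly the class of the graph relation at $w$ in $M(E)$.
\end{itemize}

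For the inverse, define $\Psi\colon M(E)_G\to M(E_G)$ by $\Psi(a_v)=a_{[v]}$. Orbit invariance is automatic. It remains to check that the graph relation at an arbitrary non-source vertex $v$ holds in $M(E_G)$, not only at representatives; I expect this to be the only point needing care.

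\begin{itemize}
\item Write $v=gw$ with $w\in\Omega$.
\item Then $r^{-1}(v)=\{ge: e\in r^{-1}(w)\}$, and $s(ge)=g\,s(e)$, so $[s(ge)]=[s(e)]$.
\item Hence $\sum_{f\in r^{-1}(v)}a_{[s(f)]}=\sum_{e\in r^{-1}(w)}a_{[s(e)]}=a_{[w]}=a_{[v]}$, using the defining relation of $E_G$ at $[w]$.
\end{itemize}

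This is the same reindexing computation used in the paper to show that the $G$-action on $M(E)$ is well defined.

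Finally, $\Phi$ and $\Psi$ are mutually inverse on generators, so they are inverse monoid isomorphisms. Therefore $M(E)_G\cong M(E_G)$, and in particular $M(E)_G$ is a graph monoid. Note that $E_G$ is row-finite because $E$ is. The choice of $\Omega$ only affects $E_G$ up to graph isomorphism, so it does not matter for the statement.
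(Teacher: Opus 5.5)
Your proposal is correct and follows essentially the same route as the paper. Both define mutually inverse homomorphisms $a_{[v]}\mapsto [a_v]$ and $a_v\mapsto a_{[v]}$ on generators, and both check the graph relations using the bijection $r^{-1}(w)\to r^{-1}(gw)$ induced by $g$. Your extra steps, the explicit presentation of $M(E)_G$ via the pairs $(a_v,a_{gv})$ and the check that sources of $E$ and $E_G$ correspond, spell out points the paper treats as obvious.
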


\begin{proof}  
	Define $\phi\colon M(E_G)\to M(E)_G$ by $\phi (a_{[v]}) = [a_v]\in M(E)_G$. The relations of $M(E_G)$ are clearly preserved. Indeed, if $v\in E^0$ and $\ol{v}\in \Omega$ is the unique element of $\Omega$ such that $[v]=[\ol{v}]$ in $E^0/G$, then we have
	$$\phi (a_{[v]}) = [a_v] = [a_{\ol{v}}] = [\sum _{e\in r^{-1}(\ol{v})} a_{s(e)}] = \sum_{e\in r_G^{-1}([v])} [a_{s(e)}] = \sum _{e\in r_G^{-1}([v])} \phi (s_G (e)).$$
	Hence $\phi$ defines a monoid homomorphism. Conversely, define $\psi \colon M(E)\to M(E_G)$
	by $\psi (a_v)= a_{[v]}$. The relations of $M(E)$ are preserved because every  $g\in G$ induces a bijection between $r^{-1}(v)$ and $r^{-1}(gv)$, for all $v\in E^0$. To see that $\psi$ factors through $M(E)_G$, it suffices to check that $\psi (a_v) = \psi (g\cdot a_v)$. But this is obvious, because $g\cdot a_v = a_{gv}$. 
    Hence we obtain a homomorphism $\ol{\psi}\colon M(E)_G\to M(E_G)$. 
    The maps $\phi$ and $\ol{\psi}$ are clearly mutually inverse.   
\end{proof}

Hence, we can state, by \cite[Proposition 7.7]{KMP25}
\begin{equation}
	\label{eq:type-semigroup-ssactions}
	\Typ (\mathcal G_{G,E}) \cong M(E)_G\cong M(E_G).
\end{equation} 
In particular, taking as $G$ the trivial group, we obtain that $\Typ (\mathcal G _E) \cong M(E)$, providing a proof of the fact mentioned in Section \ref{sect:graphs}.

Let us just show that there is a natural well-defined homomorphism $\lambda \colon M(E)_G\to \Typ (\mathcal G)$, where $\mathcal G = \mathcal G_{G,E}$. Denoting by $[v]$ the equivalence classes of $v\in E^0$ in $E^0/G$, we define 
$$\lambda (a_{[v]}) = [\mathcal Z (v)].$$
Now notice that there is a basis of open compact subsets of $\mathcal G$ of the form $\mathcal Z (\alpha,g,\beta)$, for $(\alpha,g,\beta)\in \mathcal S_{G,E}$. 
Taking $U=\mathcal Z (gv,g,v)$, for $g\in G$, we see that $UU^*= \mathcal Z (gv)$ and $U^*U=\mathcal Z (v)$.
This shows that $[\mathcal Z (v)]$ does not depend on the representative of $[v]$. 
Now one easily shows that, for $v\in E^0$, 
$$[\mathcal Z (v)] = \sum _{e\in r^{-1}(v)} [\mathcal Z (s(e))],$$
using the open compact bisections $V_e= \mathcal Z (e, 1, s(e))$ for $e\in r^{-1}(v)$. The homomorphism $\lambda$ is shown to be an isomorphism in \cite[Proposition 7.7]{KMP25}.

\medskip

We are now ready to present an example where the canonical map $\mathfrak{t}_{\mathcal G} \colon \Typ (\mathcal G)\to \V (C^*(\mathcal G))$ is not surjective. Observe that if in Example \ref{exam:Katsura-algebras} we take $B$ such that $\ker (I-B^T)\ne 0$,  then the above map cannot be surjective, since the image of $\Typ  (\mathcal G_{A,B}) \cong M(E)_G$ in $K_0(C^*(\mathcal G_{A,B}))$ is contained in the factor $\coker (I-A^T)$. Observe that for Katsura algebras we have that $M(E)_G=M(E)$, since $G$ fixes the vertices of $E$.

\begin{example}
	\label{exam:lamplighter}
	We now present an example of a self-similar action of the lamplighter group. 
	This group is important because it gave the first counterexample to the Strong Atiyah Conjecture (SAC), see \cite{GZ} and also \cite{DiSc, Gra, AC21}. Indeed the approach using the self-similar action was essential for the computations of the spectra of random walks on this group, given by Grigorchuk and  Żuk in \cite{GZ}. 
	
	\begin{definition}
    \label{def:lamplighter}
		The \emph{lamplighter group} $\Gamma$ is defined to be the wreath product of the finite group of two elements, $\Z_2$, by $\Z$. In other words,
		$$\Gamma = \Z_2 \wr \Z = \Big( \bigoplus_{i \in \Z} \Z_2 \Big) \rtimes_{\sigma} \Z,$$
		where the action implementing the semidirect product is the well-known Bernoulli shift $\sigma$ defined by
		$$\sigma_n(x)_i = x_{i+n} \quad \text{ for }x = (x_i) \in \bigoplus_{i \in \Z} \Z_2.$$
		If we denote by $t$ the generator corresponding to $\Z$ and by $s_i$ the generator corresponding to the $i^{\text{th}}$ copy of $\Z_2$, we have the following presentation for $\Gamma$:
		$$\Gamma = \langle t,\{s_i\}_{i \in \Z} \mid s_i^2 , s_is_js_is_j, ts_it^{-1}s_{i-1} \text{ for } i,j \in \Z \rangle.$$
	\end{definition}

	Grigorchuk, Nekrashevich and Sushchanskii \cite{GNS00} have shown that the lamplighter group appears as the group of automorphisms of $\Z_2[[x]]$ generated by addition by $1$, denoted by $\alpha_1$, and by multiplication $\mu_f$ by $f$, where $f(x)=(1-x)^{-1}$. The generators giving the self-similar presentation are $a= \mu_f$ and $b=\mu_f\alpha_1$. The corresponding automaton is shown in Figure \ref{fig:lamplighter}. 	
	Skipper and Steinberg \cite{SS20} have generalized this construction by considering the algebra $R[[x]]	$, where $R$ is a finite commutative ring. The isomorphism between the self-similar group $L$ generated by $a$ and $b$ and the lamplighter group $\Gamma$ sends $a$ to $t$,  $a^{-1}b= \alpha_1$ to $s_0$, and in general
	$a^{-m}(a^{-1}b)a^m$ is sent to $s_m$ for all $m\in \Z$.  
\end{example}

\begin{figure}[htbp]
	\label{fig:lamplighter}
	\begin{center}
		\begin{tikzpicture}[->,shorten >=1pt,%
			auto,node distance=3cm,semithick,
			inner sep=5pt,bend angle=30]
			\tikzset{every loop/.style={min distance=10mm,looseness=10}}
			\node[state] (A)  {$a$};
			\node[state] (B) [right of=A] {$b$};
			\path 
			(A) edge [loop left] node [below]  {$0\mid 0$} (A)
			(A) edge [bend left] node [above]  {$1\mid 1$} (B)
			(B) edge [bend left] node [below]  {$1\mid 0$} (A)
			(B) edge [loop right] node [below]  {$0\mid 1$} (B);
		\end{tikzpicture}
	\end{center}
	\caption{Automaton for the lamplighter group}
\end{figure}
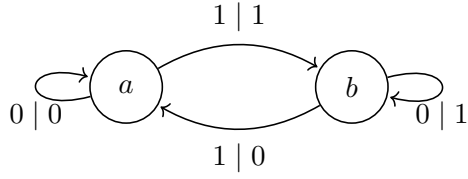

By \cite[Section 8]{MS}, the $C^*$-algebra $\mathcal O_{\Gamma,R_2}$ associated to the self-similar action of the lamplighter group $\Gamma$ is unital, nuclear, simple and purely infinite. Moreover $K_0(\mathcal O_{\Gamma,R_2}) =K_1(\mathcal O_{\Gamma,R_2}) =0$. Hence we can conclude from the classification theorem that $\mathcal O _{\Gamma,R_2} \cong \mathcal O_2$.

\begin{remark}
	\label{rem:HKConjecture} 
In \cite[Definition 5.4]{RS}, Rainone and Sims define the type semigroup of the ample groupoid $\mathcal G$ as the quotient $C_c (\mathcal G^{(0)}, \Z^+)/{\sim}$, where $f\sim g $ if and only if there are open compact bisections $U_1,\dots , U_n$ such that $f= \sum_{i=1}^n 1_{s(U_i)}$ and $g= \sum _{i=1}^n 1_{r(U_i)}$. They also observe that their type semigroup is isomorphic to the one introduced by B\"onicke and Li in \cite{bl20}, see \cite[Remark 5.5]{RS}. The fact that both semigroups coincide with the semigroup introduced in Section \ref{sect:graphs} was noticed in \cite[Proposition 7.3]{abps}.

By \cite{abbl23}, the map $\iota \colon \Typ (\mathcal G) \to H_0(\mathcal G)$ defined by $\iota ([f]) =[f]\in H_0(\mathcal G)$ is a well-defined semigroup homomorphism, where $H_0(\mathcal G)$ is the $0$-th homology group of the groupoid $\mathcal G$. Moreover $\iota (\Typ (\mathcal G)) = H_0(\mathcal G)^+$, and the semigroup homomorphism $\iota$ represents exactly the Grothendieck group of $\Typ (\mathcal G)$, see also \cite[Subsection 2.2]{Melleray} for related results. Note that there is a commutative diagram  
$$\xymatrix{\Typ (\mathcal G) \ar[d]_{\mathfrak t _{\mathcal G}} \ar[r]^{\iota}& H_0 (\mathcal G) \ar[d]^{\mathfrak h_{\mathcal G}}\\
	V(C^*(\mathcal G)) \ar[r] & K_0(C^* (\mathcal G))},$$
    where $\mathfrak h_{\mathcal G} \colon H_0 (\mathcal G)\to K_0(C^*(\mathcal G))$ and $V(C^*(\mathcal G))\to K_0(C^*(\mathcal G))$ are the standard homomorphisms. 

    Matui's Conjecture predicts in particular that the map $\mathfrak h_{\mathcal G} \colon H_0(\mathcal G)\to K_0(C^*(\mathcal G))$ is injective. Although this conjecture has been disproved in general, its validity has also been proved in many situations. By the diagram above, the semigroup homomorphism
    $\mathfrak t_{\mathcal G} \colon \Typ (\mathcal G)\to V(C^*(\mathcal G))$ can be considered as a non-stable version of the group homomorphism 
    $\mathfrak h_{\mathcal G} \colon H_0(\mathcal G)\to K_0(C^*(\mathcal G))$, and it is natural to investigate conditions under which it is injective or surjective.

\end{remark}

\section{Separated graph $C^*$-algebras}
\label{sect:separated}

Separated graph algebras were introduced in \cite{ag11} and \cite{ag12}. The purpose was to obtain a class of graph algebras whose structure of projections (finitely generated projective modules) is as general as possible. In particular, the algebras $L(m,n)$ considered by Leavitt \cite{Lea57, Lea62} are realized as full corners of the Leavitt path algebras associated to the separated graph $E(m,n)$. See Figure \ref{fig:m,nsepargraph} for the case of $E(2,3)$.
Similarly, the $C^*$-algebras built by McClanahan \cite{MCl92, MCl93} can be obtained as full corners of the graph $C^*$-algebras of the separated graphs $E(m,n)$. 

We will review here the definitions of these algebras. 

\begin{definition}{\rm \cite{ag12}} 
\label{defsepgraph}
	A \emph{separated graph} is a pair $(E,C)$ where $E$ is a graph,
	$C=\bigsqcup _{v\in E^ 0} C_v$, and $C_v$ is a partition of
	$r^{-1}(v)$ (into pairwise disjoint nonempty subsets) for every
	vertex $v$. (In case $v$ is a source, we take $C_v$ to be the empty
	family of subsets of $r^{-1}(v)$.)
	
	If all the sets in $C$ are finite, we say that $(E,C)$ is a
	\emph{finitely separated} graph. This necessarily holds if $E$ is
	row-finite (that is, if $r^{-1}(v)$ is a finite set for every
	$v\in E^0$.)
	
	The set $C$ is a \emph{trivial separation} of $E$ in case $C_v=
	\{r^{-1}(v)\}$ for each $v\in E^0\setminus \Sour (E)$. In that case,
	$(E,C)$ is called a \emph{trivially separated graph} or a
	\emph{non-separated graph}.
\end{definition}

The following definition gives the Leavitt path algebra $L(E,C)$
as a universal object in the category of complex $*$-algebras.

\begin{definition}
	\label{def:LPASG} The
	{\it Leavitt path algebra of the separated graph} $(E,C)$ is the complex $*$-algebra $L(E,C)$ with
	generators $\{ v, e\mid v\in E^0, e\in E^1 \}$, subject to the
	following relations:
	\begin{enumerate}
		\item[] (V)\ \ $vv^{\prime} = \delta_{v,v^{\prime}}v$ \ and \ $v=v^*$ \ for all $v,v^{\prime} \in E^0$ ,
		\item[] (E)\ \ $r(e)e=es(e)=e$ \ for all $e\in E^1$ ,
		\item[] (SCK1)\ \ $e^*e'=\delta _{e,e'}s(e)$ \ for all $e,e'\in X$, $X\in C$, and
		\item[] (SCK2)\ \ $v=\sum _{ e\in X }ee^*$ \ for every finite set $X\in C_v$, $v\in E^0$.
	\end{enumerate}
\end{definition}

The Leavitt path algebra $L(E)$ is just $L(E,C)$ where $C_v= \{
r^{-1}(v)\}$ if $r^{-1}(v)\ne \emptyset $ and $C_v=\emptyset $ if
$r^{-1}(v)=\emptyset$.

We now recall the definition of the graph C*-algebra $C^*(E,C)$,
introduced in \cite{ag12}.

\begin{definition} The \emph{graph C*-algebra} of a separated graph $(E,C)$ is the
	C*-algebra $C^*(E,C)$  with generators $\{ v, e \mid v\in E^0,\ e\in
	E^1 \}$, subject to the relations (V), (E), (SCK1), (SCK2). In other
	words, $C^*(E,C)$ is the enveloping C*-algebra of $L(E,C)$.
\end{definition}

In case $(E,C)$ is trivially separated, $C^*(E,C)$ is just the
classical graph C*-algebra $C^*(E)$. There is a unique
*-homomorphism  $L (E,C) \rightarrow
C^*(E,C)$ sending the generators of $L(E,C)$ to their canonical
images in $C^*(E,C)$. This map is injective by \cite[Theorem
3.8(1)]{ag12}.

A remarkable difference between $L(E,C)$ and $C^*(E,C)$ is that we {\it know} exactly what is the
structure of the monoid $\V (L_K(E,C))$ for any separated graph
$(E,C)$, but we still do not know the structure of the monoid
$\V (C^*(E,C))$, although it is conjectured in \cite{ag11} that the
natural map $L (E,C)\to C^*(E,C)$ induces an isomorphism
$\V (L(E,C))\to \V (C^*(E,C))$, see \cite[Problem 7.6]{ag11}.

We will need the definition of $M(E,C)$ only for finitely separated
graphs. The reader can consult \cite{ag12} for the definition in the
general case. Let $(E,C)$ be a finitely separated graph, and let
$M(E,C)$ be the abelian monoid given by generators $a_v$, $v\in
E^0$, and relations $a_v=\sum _{e\in X} a_{s(e)}$, for $X\in C_v$,
$v\in E^0$. Then there is a canonical monoid homomorphism $M(E,C)\to
\V (L(E,C))$, which is shown to be an isomorphism in
\cite[Theorem 4.3]{ag12}. 

The graph $C^*$-algebra $C^*(E,C)$, as well as the algebra $L(E,C)$, have a natural structure of an amalgamated free product. In particular no groupoid model is known for them. We may produce a class of groupoid algebras associated to a separated graph by considering the following construction from \cite{ae14}.

\begin{definition}
	\label{def:algebrasOEC}
	Let $(E,C)$ be a separated graph. We then define 
	$$\mathcal O (E,C) = C^*(E,C)/J$$
	where $J$ is the closed two-sided ideal of $C^*(E,C)$ generated by all the commutators $[uu^*,vv^*]$, where $u,v$ range on all elements of the multiplicative subsemigroup $U$ of $C^*(E,C)$ generated by $E^1\cup (E^1)^*$. 
	Similarly we define
	$$\Lab (E,C) = L(E,C)/J_{\mathrm{alg}},$$
	where $J_{\mathrm{alg}}$ is the two-sided ideal of $L(E,C)$ generated by all the commutators  $[uu^*,vv^*]$ as above.  	
\end{definition}

The above construction corrects a ``defect" which the algebras $L(E,C)$ and $C^*(E,C)$ might have. Namely it is not true in general that the product of two partial isometries of a $C^*$-algebra is again a partial isometry. The exact condition needed for the product $uv$ of two partial isometries $u,v$ of a $C^*$-algebra to be a partial isometry is that the projections $vv^*$ and $u^*u$ commute.

Indeed if $vv^*$ and $u^*u$ commute then
$$(uv)(uv)^* (uv) = u(vv^*)(u^*u)v = (uu^*u)(vv^*v) = uv,$$
so $uv$ is a partial isometry. This holds in any $*$-ring, and in fact in any $*$-semigroup.
The reverse implication uses the fact that $A$ is a $C^*$-algebra, see \cite[Proposition 12.8]{ExelBook}.

Returning to our graph $C^*$-algebras $C^*(E,C)$, whenever $v\in E^0$ is such that $C_v$ contains two different elements $X$ and $Y$ such that $|X|\ge 2$ and $|Y|\ge 2$, and $e\in X$, $f\in Y$, we have that 
$e^*$ and $f$ are partial isometries but $e^*f$ is not a partial isometry, because $ee^*$ and $ff^*$ do not commute. With our construction of $\mathcal O (E,C)$, we get that the $*$-subsemigroup $S$ of $\mathcal O (E,C)$ generated by $E^0\cup E^1$ is an {\it inverse semigroup}, which means that all elements of $S$ are partial isometries and that all idempotents in $S$ mutually commute. This makes the algebras $\mathcal O (E,C)$ amenable to be studied using the theory of étale groupoids, following the model introduced by Exel in \cite{Exel08}. We refer the reader to \cite{law} for the general theory of inverse semigroups.

\subsection{Examples}

First we note that all separated graph $C^*$-algebras can be reduced, modulo Morita equivalence, to the study of bipartite graphs. A {\it bipartite graph} is a directed graph $E$ such that $E^0= E^{0,0} \sqcup E^{0,1}$ and all arrows in $E$ start in $E^{0,1}$ and end in $E^{0,0}$, that is, for all $e\in E^1$ we have 
$s(e)\in E^{0,1}$ and $r(e)\in E^{0,0}$. 

\begin{proposition} {\rm \cite[Proposition 9.1]{ae14}}
	\label{prop:reductiontobipartitegraphs} Let $(E,C)$ be a separated
	graph. Then there exists a bipartite separated graph
	$(\widetilde{E},\widetilde{C})$ such that
	$$L(\widetilde{E},\widetilde{C}) \cong  M_2(L(E,C)),\qquad
	C^*(\widetilde{E},\widetilde{C}) \cong M_2(C^*(E,C)).$$  Moreover we
	have
	$$\Lab (\widetilde{E}, \widetilde{C}) \cong M_2(\Lab
	(E,C)),\qquad \mathcal O (\widetilde{E}, \widetilde{C})\cong
	M_2(\mathcal O (E,C)) .$$
\end{proposition}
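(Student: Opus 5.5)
We build $(\widetilde{E},\widetilde{C})$ explicitly by splitting every vertex into a ``range copy'' and a ``source copy'', and then check by hand that the algebras match. Set $\widetilde{E}^{0,0}=\{v^0 : v\in E^0\}$ and $\widetilde{E}^{0,1}=\{v^1: v\in E^0\}$. For the edges, take
$$\widetilde{E}^1=\{\tilde e : e\in E^1\}\sqcup\{h_v : v\in E^0\},$$
with
$$r(\tilde e)=r(e)^0,\quad s(\tilde e)=s(e)^1,\quad r(h_v)=v^0,\quad s(h_v)=v^1.$$
The separation is $\widetilde{C}_{v^0}=\{\widetilde X : X\in C_v\}\cup\{\{h_v\}\}$, where $\widetilde X=\{\tilde e: e\in X\}$, and $\widetilde{C}_{v^1}=\emptyset$. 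The singleton $\{h_v\}$ forces $h_vh_v^*=v^0$ and $h_v^*h_v=v^1$, so $h_v$ is a partial isometry identifying the two copies of $v$. This graph is bipartite by construction.

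For the algebraic isomorphism we work in $M_2(L(E,C))$ with matrix units $e_{ij}$. Define $\Phi\colon L(\widetilde E,\widetilde C)\to M_2(L(E,C))$ by
$$v^0\mapsto v e_{11},\quad v^1\mapsto v e_{22},\quad h_v\mapsto v e_{12},\quad \tilde e\mapsto e\, e_{12}.$$
We then check the relations (V), (E), (SCK1) and (SCK2).
\begin{itemize}
\item (SCK1): for $e,f\in X$ we have $(e e_{12})^*(f e_{12})=e^*f\, e_{22}=\delta_{e,f}\,s(e)e_{22}$.
\item (SCK2) at $v^0$: this holds for each $\widetilde X$ because of (SCK2) in $L(E,C)$, and for $\{h_v\}$ because $v e_{12}\,e_{21} v=v e_{11}$.
\end{itemize}
The inverse $\Psi$ is defined on matrix entries. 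Put
$$\Psi(v e_{11})=v^0,\quad \Psi(ve_{22})=v^1,\quad \Psi(ve_{12})=h_v,\quad \Psi(e\,e_{11})=\tilde e\, h_{s(e)}^*,\quad \Psi(e\,e_{22})=h_{r(e)}^*\tilde e,$$
and similarly $\Psi(e\, e_{12})=\tilde e$. To make this rigorous, it is cleanest to define a homomorphism $L(E,C)\to L(\widetilde E,\widetilde C)$ via $v\mapsto v^0$ and $e\mapsto \tilde e h_{s(e)}^*$. One checks it respects the relations, using $h_wh_w^*=w^0$ and $h_w^*h_w=w^1$, so it lands in the corner $P L(\widetilde E,\widetilde C)P$ with $P=\sum v^0$ (a local unit in the infinite case). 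Since the $h_v$ implement an equivalence between $P$ and $1-P$, this corner is full and the whole algebra is $M_2$ of the corner. Then we verify that $\Phi$ and $\Psi$ compose to the identity on generators.

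The $C^*$-statement follows from the algebraic one by universality: $C^*(\widetilde E,\widetilde C)$ is the enveloping $C^*$-algebra of $L(\widetilde E,\widetilde C)$, and the enveloping algebra of $M_2(L(E,C))$ is $M_2(C^*(E,C))$.

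The main obstacle is the abelianized versions. We must show that $\Phi$ carries the ideal $J$ generated by the commutators $[uu^*,ww^*]$, with $u,w$ in the semigroup $\widetilde U$ generated by $\widetilde E^1\cup(\widetilde E^1)^*$, onto $M_2(J)$. The plan is to describe $\Phi(\widetilde U)$ explicitly. A product of $\tilde e$'s, $h_v$'s and their adjoints maps to $x\, e_{ij}$, where $x$ lies in the semigroup $U$ (or is a vertex). This is because the $h$'s contribute only vertices, and consecutive $\tilde e$, $\tilde f^*$ factors must alternate through the $e_{12}$, $e_{21}$ pattern exactly as $e$, $f^*$ do. Conversely, every $x e_{ii}$ with $x\in U$ arises this way by inserting $h$'s. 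Hence
$$\Phi(uu^*)=xx^*e_{ii},$$
and the commutator of two such elements is either $0$ (when $i\neq j$) or $[xx^*,yy^*]e_{ii}$. Therefore $\Phi(J)$ is the ideal generated by $J e_{11}$ and $J e_{22}$, which is $M_2(J)$. Passing to quotients gives both $\Lab(\widetilde E,\widetilde C)\cong M_2(\Lab(E,C))$ and $\mathcal O(\widetilde E,\widetilde C)\cong M_2(\mathcal O(E,C))$.
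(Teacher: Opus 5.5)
Your proof is correct and follows the paper's route. You use the same doubled bipartite graph with connecting edges $h_v$ and the same pair of mutually inverse maps: your $\Psi$, built from $e\mapsto \tilde e h_{s(e)}^*$ and $h=\sum_v h_v$, reproduces exactly the paper's formulas for $\psi$. Your computation of $\Phi(\widetilde U)$ supplies the details the paper leaves out when it says the abelianized isomorphisms are obtained ``in the same way''.

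Your separation $\widetilde C_{v^0}=\{\widetilde X : X\in C_v\}\cup\{\{h_v\}\}$ is the right one for a general separation. The sketch in the notes writes $\{e_0 \mid e\in r^{-1}(v)\}$ instead, which would make $\varphi$ ill-defined unless $(E,C)$ is trivially separated.
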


{\it Sketch of proof.}
Let $V_0$ and $V_1$ be two disjoint copies of $E^0$, and denote the
canonical maps $E^0\to V_i$ by $v\mapsto v_i$ for $i=0,1$.  Write
$\widetilde{E}^{0,0}= V_0$ and $\widetilde{E}^{0,1}=  V_1$. Now
$\widetilde{E}^1$ will be the disjoint union of a copy of $E^0$ and
a copy of $E^1$: $$\widetilde{E}^1= \{h_v\mid v\in E^0 \} \bigsqcup
\{e_0\mid e\in E^1 \},$$ with
$$
\tilde{r}(h_v)=v_0, \quad \tilde{s}(h_v)=v_1, \quad \tilde{r}(e_0)=
r(e)_0, \quad \tilde{s}(e_0)= s(e)_1, \qquad (v\in E^0, e\in E^1 )
.$$ 
For $v\in E^0$, define $C_{v_0}=\{\{e_0 \mid e\in r^{-1}(v)\} , \{h_v\} \}$, and $C_{v_1} = \emptyset$.

Denote by $e_{ij}$, $0\le i,j\le 1$ the standard matrix units of
$M_2(K)$.  We define maps $\varphi \colon L(\widetilde{E},
\widetilde{C})\to M_2(L(E,C))$ and $\psi \colon M_2(L(E,C))\to
L(\widetilde{E},\widetilde{C})$ by the rules
$$\varphi (v_i)= v\otimes e_{ii},\quad \varphi (h_v)= v\otimes
e_{01},\quad \varphi (e_0)= e\otimes e_{01}, \qquad (v\in E^0, e\in
E^1, i=0,1) ,$$ and
$$\psi (v\otimes e_{ii}) =v_i,\qquad \psi (v\otimes e_{01})=h_v,
\qquad (v\in E^0, i=0,1) ,$$
\begin{align*}\psi (e\otimes e_{00}) & =
	e_0h_{s(e)}^*,\qquad \, \psi (e\otimes e_{11})  = h ^*_{r(e)}e_0\\
	\psi
	(e\otimes e_{01}) & =e_0,\qquad \qquad \psi (e\otimes e_{10})  =
	h^*_{r(e)} e_0 h^*_{s(e)}, \qquad (e\in E^1) .
\end{align*}
Then $\varphi$ and $\psi$ give well-defined $*$-homomorphisms which are
mutually inverse. Hence we obtain $L(\widetilde{E},\widetilde{C}) \cong  M_2(L(E,C))$.
The other isomorphisms are shown in the same way. \qed.

In particular, since any classical graph C*-algebra $C^*(E)$
satisfies that the final projections mutually commute, that is, $[e(u),e(u')]=0$ for all
$u,u'\in U$, we see that $\mathcal O (E)= C^*(E)$ can always be
interpreted (through a very concrete Morita equivalence) as a graph
C*-algebra of a bipartite separated graph. A similar statement
applies to Leavitt path algebras.

We start our description of concrete examples with a motivational
example for the entire theory of separated graphs, see \cite{ag11},
\cite{ag12}, \cite{aek13}, \cite{ae14}.

\begin{example}
	\label{exam:m,ndyn-system}
	For integers $1\le m\le n$, we define the separated graph
	\newline $(E(m,n),C(m,n)) $, where
	\begin{enumerate}
		\item $E(m,n)^0 := \{v,w\}$ (with $v\ne w$).
		\item $E(m,n)^1 :=\{\alpha_1,\dots , \alpha_n,\beta_1,\dots ,\beta_m\}$ (with $n+m$ distinct edges).
		\item $s(\alpha_i)=s(\beta _j) =w$ and $r(\alpha _i)=r(\beta _j)=v$
		for all $i$, $j$.
		\item $C(m,n)= C(m,n)_v := \{X,Y\}$, where $X:= \{\alpha_1,\dots ,\alpha_n\}$ and $Y:=  \{\beta _1,\dots, \beta _m \}$.
	\end{enumerate}
	See Figure \ref{fig:m,nsepargraph} for a picture in the case $m=2$,
	$n=3$. By \cite[Proposition 2.12]{ag12},
	$$L(E(m,n),C(m,n))\cong
	M_{n+1}(L(m,n)) \cong M_{m+1}(L(m,n)),$$ where $L(m,n)$ is the
	classical Leavitt algebra of type $(m,n)$. The same argument (by way
	of universal properties) shows that
	\begin{equation*}  \label{isomatUnc}
		C^*(E(m,n),C(m,n)) \cong M_{n+1}(U^{\text{nc}}_{m,n})\cong
		M_{m+1}(U^{\text{nc}}_{m,n}) \,,
	\end{equation*}
	where $U^{\text{nc}}_{m,n}$ denotes the C*-algebra generated by the
	entries $u_{ij}$ of a universal unitary $m\times n$ matrix $U= (u_{ij})$, as studied by
	Brown and McClanahan in \cite{Brown, MCl92, MCl93}. A concrete isomorphism from the corner $wC^*(E(m,n),C(m,n))w$ to $U_{m,n}^{\mathrm{nc}}$ is given by sending $\alpha_i^*\beta_j$ to $u_{ij}$ and $\beta_j^*\alpha_i$ to $u_{ij}^*$.   
	
	The algebras $\Lab (m,n):= \Lab (E(m,n), C(m,n))$ provide natural examples
	of actions on a compact Hausdorff space
	supporting $(m,n)$-paradoxical decompositions (see Subsection \ref{subsect:type-semigroup} below).
\end{example}

\begin{figure}[htb]
	\begin{tikzpicture}[scale=4]
		\node (v) at (0,1)  {$v$};
		\node (w) at (0,0) {$w$};
		\draw[<-,red] (v.west) .. controls+(left:9mm) and +(up:.1mm) ..
		(w.north);
		\draw[<-,red] (v.west) .. controls+(left:6mm) and +(up:1mm) ..
		(w.north);
		\draw[<-,red] (v.west) .. controls+(left:3mm) and +(up:2mm) ..
		(w.north);
		\draw[<-,blue] (v.east) .. controls+(right:7mm) and +(up:1mm) ..
		(w.north);
		\draw[<-,blue] (v.east) .. controls+(right:3.5mm) and +(up:2mm) ..
		(w.north);
	\end{tikzpicture}
	\caption{The separated graph $(E(2,3),C(2,3))$}
	\label{fig:m,nsepargraph}
\end{figure}
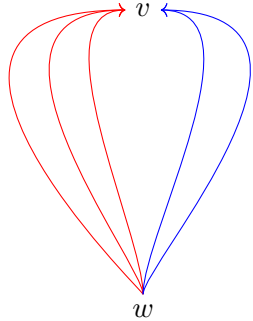

We now give an example related to the universal $C^*$-algebra generated by a partial isometry.

\begin{example}
	\label{exam:alggenepi} Let $(E,C)$ be the separated graph described
	in Figure \ref{fig:partiisometry}, with $C_v=\{ X, Y\}$ and $X=\{
	\alpha _1,\alpha_2\}$, $Y=\{ \beta _1,\beta _2 \}$. By
	\cite[Lemma 5.5(a)]{Aone-rel}, the corner $vC^*(E,C)v$ is the unital
	universal C*-algebra generated by a partial isometry. See \cite{BN}
	for a study of the {\it non-unital} universal C*-algebra
	generated by a partial isometry (of which $vC^*(E,C)v$ is the
	unitization).

	\begin{center}{
			\begin{figure}[htb]
				\begin{tikzpicture}[scale=2]
					\node (v) at (1,1)  {$v$};
					\node (w_1) at (1,0) {$w_1$};
					\node (w_2) at (0,0) {$w_2$};
					\node (w_3) at (2,0) {$w_3$};
					\draw[<-,blue]  (v.west) ..  node[above]{$\alpha_2$} controls+(left:3mm) and +(up:3mm) ..
					(w_2.north) ;
					\draw[<-,blue] (v.south) .. node[below, left]{$\alpha_1$}  controls+(left:4mm) and +(up:5mm) ..
					(w_1.north);
					\draw[<-,red] (v.south) .. node[below, right]{$\beta_1$}
					controls+(right:4mm) and +(up:5mm) ..
					(w_1.north);
					\draw[<-,red] (v.east) .. node[above]{$\beta_2$}
					controls+(right:3mm) and +(up:3mm) ..
					(w_3.north);
				\end{tikzpicture}
				\caption{The separated graph of a partial isometry}
				\label{fig:partiisometry}
		\end{figure}
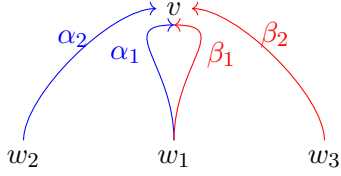}
	\end{center}

	The C*-algebra $v\mathcal O (E,C) v$ is the C*-algebra of the {\it
		free monogenic inverse monoid}, which was studied, among other
	places, in \cite{HR}. 
\end{example}

\begin{example}
	\label{exam:lamplighter} Let $(E,C)$ be the separated graph
	described in Figure \ref{fig:lampgroup}, with $C_v=\{ X, Y\}$ and
	$X=\{ \alpha _1,\alpha_2\}$, $Y=\{ \beta _1,\beta _2 \}$. By \cite[Lemma 5.5(b)]{Aone-rel}, 
	we have
	$$ vC^*(E,C)v\cong C^*( (* _{\Z}\Z_2)\rtimes \Z) \,$$
	where $\Z$ acts on $*_{\Z} \Z_2$ by shifting the factors of the
	free product. Similar computations give that $vL(E,C)v$ is
	isomorphic to the group algebra $\C [(*_{\Z}\Z_2)\rtimes \Z]$.
	
\begin{center}{
			\begin{figure}[htb]
				\begin{tikzpicture}[scale=1.5]
                \node (v) at (0,1)  {$v$};
					\node (w_1) at (-1,0) {$w_1$};
					\node (w_2) at (1,0) {$w_2$};
				        \draw[<-,blue] (v) -- (w_1) node[midway, below] {$\alpha_1$};
                        \draw[<-,blue] (v) -- (w_2) node[midway, below] {$\alpha_2$};
                       \draw[<-,bend right=30,red] (v) to node[midway,above] {$\beta_1$} (w_1);
                       \draw[<-,bend left=30,red] (v) to node[midway,above] {$\beta_2$} (w_2);
                     \end{tikzpicture}
				\caption{The separated graph underlying the lamplighter group}
				\label{fig:lampgroup}
		\end{figure}}
	\end{center}

	It is easy to show that
	$$v\mathcal O (E, C)v\cong C^*(\Z_2\wr \Z), \qquad v\Lab (E,C)v\cong \C [\Z_2\wr \Z] , $$ where $\Z_2 \wr \Z$ is the lamplighter group, see Definition \ref{def:lamplighter}. The $C^*$-algebra $ C^*(\Z_2\wr \Z)$ is isomorphic to the crossed product $C^*$-algebra $C(\{0,1\}^\Z)\rtimes \Z$ associated to the two-sided shift on the Cantor space $\{0,1\}^\Z$ of biinfinite sequences on $\{0,1\}$. This follows upon identifying 
	the group $C^*$-algebra $C^*[\bigoplus_{\Z}\Z_2]$ with the $C^*$-algebra $C(\{0,1\}^\Z)$, since $\{0,1\}^\Z$ is the {\it Pontrjagin dual} of the discrete group $\bigoplus_{\Z}\Z_2$. 
\end{example}

\subsection{The groupoid model}
\label{subsect:groupoid-model}

It has been shown in complete generality in \cite{abc25} that the tame $C^*$-algebras $\mathcal O (E,C)$ admit a groupoid model. The corresponding groupoid is always Hausdorff. The structure of this groupoid is a special one, and comes from a structure of $\mathcal O (E,C)$ as a partial crossed product. In these notes, we will sketch the construction of the partial action in a particular case, namely in the case of a finite bipartite separated graph. We will always assume that a finite bipartite separated graph $(E,C)$ satisfies that $s(E^1)=E^{0,1}$and $r(E^1)= E^{0,0}$. The groupoid has better properties whenever the finite bipartite separated graph $(E,C)$ satisfies an additional rule, the ``three-twos'' rule: 
\begin{enumerate}
\item[(TT1)] $|C_v| \geq 2$ for all $v \in E^{0,0}$; 
\item[(TT2)] $|X| \geq 2$ for all $X \in C$;
\item[(TT3)] $|s^{-1}(w)| \geq 2$ for all $w \in E^{0,1}$.
\end{enumerate}

For instance, the separated graphs $E(m,n)$ for $2\le m\le n$ satisfy these conditions. 

We first recall the definition of a partial action of a group, following \cite{ExelBook}.  The definition can be given in various categories, the most basic of them is the category of sets. But we are also interested in partial actions on topological spaces, on $*$-algebras, and on $C^*$-algebras. We will give first the definition for sets, and then we will indicate the necessary upgradings for the remaining structures.  

\begin{definition} {\rm \cite[Definition 2.1]{ExelBook}}
	A partial action of a group $G$ on a set $X$ is a pair $\theta = (\{D_g\}_{g\in G}, \{\theta_g \}_{g\in G})$, where $D_g$ are subsets of $X$ and 
	$$\theta_g \colon D_{g^{-1}} \to D_g$$
	are maps such that
	\begin{enumerate}
		\item[(i)] $D_1= X$ and $\theta_1$ is the identity,
		\item[(ii)] $\theta_g\circ \theta _h\subseteq \theta _{gh}$ for all $g,h\in G$. 
	\end{enumerate}
\end{definition}

Property (ii) says that the map $\theta_{gh}$ extends the composition $\theta_g\circ \theta_h$ of the partially defined maps $\theta_g$, $\theta_h$. Recall that for two partially defined maps $f,g$ on $X$, the domain of $f\circ g$ is defined as $g^{-1}(\text{dom} (f))= \{x\in \text{dom}(g) : g(x)\in \text{dom} (f)\}$, and the range of $f\circ g$ is defined as $\text{range} (f\circ g) = f(\text{range} (g)\cap \text{dom}(f))$. It follows that each map $\theta_g$ is bijective, and $\theta_g^{-1} = \theta_{g^{-1}}$
\cite[Proposition 2.4]{ExelBook}.

If $X$ is a topological space, then a topological partial action of $G$ on $X$ is a partial action $\theta$ such that all sets $D_g$ are open subsets of $X$ and all maps $\theta_g$ are homeomorphisms. 
If $A$ is a $*$-algebra, then we require that all the sets $D_g$ are $*$-ideals of $A$, and that all maps $\theta_g$ are $*$-algebra isomorphisms. Finally, if $A$ is a $C^*$-algebra, we require all the subsets $D_g$ to be closed two-sided ideals of $A$, and all maps $\theta_g$ to be $*$-isomorphisms. 


We can generalize Example \ref{exam:transformation-groupoid} to get also a transformation groupoid from a topological partial action.

\begin{example}
	\label{exam:partialtransformation-groupoid}
	Let $\theta = (\{D_g\}_{g\in G}, \{\theta_g\}_{g\in G})$ be a topological partial action of a discrete group $G$ on a compact set $X$. Then the transformation groupoid associated to the partial action $\theta$ is the groupoid 
	$\mathcal G_{G,X}$ whose underlying set is  
	$$\{ (g,x)\in G\times X : x\in D_{g^{-1}}\},$$ 
	with unit space $\{e\}\times X\cong X$, and with  $s(g,x) = x$, $r(g,x)= gx$, and
	$$(h,y) (g,x) = (hg,x)$$
	whenever $y=gx$.  
\end{example}

Again, the groupoid $C^*$-algebra associated to the transformation groupoid of a partial action of $G$ on $X$ is isomorphic with the {\it partial crossed product} $C^*$-algebra, denoted by $C(X)\rtimes_\theta G$. This is defined in the same way as the ordinary crossed product, but in the construction of 
the algebraic partial crossed product  $*$-algebra $C(X)\alrtimes_\theta G$, one only takes elements of the form $f\delta_g$, where $f\in C_0(D_g)$ and $g \in G$, see \cite[Chapters 8 and 11]{ExelBook}. 

Theorem \ref{thm:cross-product} can be generalized to the case of partial actions, as follows:

\begin{theorem}{\rm \cite{Abadie}}
	\label{thm:partial-cross-product}
	Let $\theta$ be a topological partial action of a discrete group $G$ on a compact set $X$. Then 
	$$C^*(\mathcal G_{G,X} ) \cong C(X)\rtimes_\theta G.$$
\end{theorem}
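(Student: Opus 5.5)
The plan is to pass through the dense $*$-subalgebras. On one side sits $C_c(\mathcal G_{G,X})$; on the other sits the algebraic partial crossed product $C(X)\alrtimes_\theta G$, spanned by the elements $f\delta_g$ with $f\in C_0(D_g)$, with multiplication $(f\delta_g)(h\delta_k)=\theta_g(\theta_{g^{-1}}(f)h)\delta_{gk}$ and the corresponding involution. Following the remark after Theorem \ref{thm:cross-product}, I define
$$\omega\colon C_c(\mathcal G_{G,X})\to C(X)\alrtimes_\theta G,\qquad \omega(F)=\sum_{g\in G} F_g\,\delta_g,\qquad F_g(y)=F(g,\theta_{g^{-1}}(y))\ \ (y\in D_g).$$
Note that I index by the range variable. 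The reason is that the partial crossed product uses functions supported in $D_g$, while the arrow $(g,x)$ has source $x\in D_{g^{-1}}$ and range $\theta_g(x)\in D_g$. I then extend $F_g$ by zero outside $D_g$.

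The first step is to check that $\omega$ is well defined and bijective. $\mathcal G_{G,X}$ is the disjoint union of the open sets $\{g\}\times D_{g^{-1}}$. Each of these is an open bisection, homeomorphic to $D_g$ via the range map. So a compactly supported $F$ meets only finitely many $g$, and each $F_g$ is continuous with compact support inside $D_g$. Hence $F_g\in C_c(D_g)\subseteq C_0(D_g)$.

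This already exposes a subtlety: the image is $\bigoplus_g C_c(D_g)\delta_g$, not $\bigoplus_g C_0(D_g)\delta_g$. I would therefore take the target dense subalgebra to be the $C_c$ version. This is a $*$-subalgebra because $\theta_g$ maps $C_c(D_{g^{-1}})$ onto $C_c(D_g)$. It is also dense, and any $C^*$-norm bound holds on it. The enveloping $C^*$-algebras of the two versions then agree. With this choice, $\omega$ is a linear bijection by construction.

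The second step is to verify that $\omega$ preserves products and adjoints. The convolution formula gives
$$(F*H)(gk,x)=F(g,\theta_k(x))H(k,x),$$
summed over factorizations. Rewriting this in range coordinates $y=\theta_{gk}(x)$ yields exactly $\theta_g(\theta_{g^{-1}}(F_g)H_k)$. Here the inclusion $\theta_g\circ\theta_k\subseteq\theta_{gk}$ ensures that the domains match: $x\in D_{k^{-1}}$ together with $\theta_k(x)\in D_{g^{-1}}$ is exactly the domain of $\theta_g\circ\theta_k$. The involution check is similar, using $(g,x)^{-1}=(g^{-1},\theta_g(x))$.

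The final step is to compare the universal norms, and I expect this to be the main obstacle. $C^*(\mathcal G_{G,X})$ is the completion of $C_c(\mathcal G)$ in the universal norm over all $*$-representations. $C(X)\rtimes_\theta G$ is the enveloping algebra of the algebraic partial crossed product. An isomorphism of dense $*$-algebras gives an isomorphism of completions as soon as both universal norms are taken over the same class of representations, namely all $*$-representations on Hilbert space. So I need both norms to be finite and computed over all representations.

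On the groupoid side, every $*$-representation of $C_c(\mathcal G)$ is bounded by the $I$-norm (the $\sup$ over units of the $\ell^1$ norms along fibers); this is standard for étale groupoids. On the crossed product side, a representation of the algebraic partial crossed product is bounded because each $f\delta_g$ satisfies $\|\pi(f\delta_g)\|^2=\|\pi((f\delta_g)^*(f\delta_g))\|$, and the right-hand side is the norm of $\pi$ applied to an element of $C(X)$, hence at most its sup norm. So the $\omega$-transported norms coincide, and $\omega$ extends to the desired isomorphism.
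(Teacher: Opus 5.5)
Your proof is correct and follows the route the paper indicates. The paper only cites Abadie for this statement, but it explains the global case (Theorem~\ref{thm:cross-product}) as induced by an identification $\omega$ of $C_c(\mathcal G_{G,X})$ with the algebraic crossed product; your argument extends exactly this to partial actions, including the passage from $C_c(D_g)$ to $C_0(D_g)$ coefficients via the bound $\|\pi(f\delta_g)\|\le\|f\|_\infty$. Your choice to index by the range variable is the right one for the conventions $s(g,x)=x$, $r(g,x)=gx$; the source-indexed formula $\omega(f)(\gamma)(x)=f(\gamma,x)$ displayed after Theorem~\ref{thm:cross-product} is not multiplicative under those conventions, as already the swap action of $\mathbb{Z}_2$ on two points shows.
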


Let $(E,C)$ be a finite bipartite separated graph. We now define the space $\Omega(E,C)$ following \cite[Section 8]{ae14}.

\begin{definition}\label{definition:local_configurations}
	Let $\mathbb{F}$ be the free group on the set $E^1$. Given $P \in 2^{\mathbb{F}}$ and $\alpha \in P$, the \textit{local configuration} $P_{\alpha}$ of $P$ at $\alpha$ is defined as the set
	$$P_{\alpha} := \{ \sigma \in E^1 \cup (E^1)^{-1} \mid \alpha \cdot \sigma \in P\}.$$
\end{definition}

\begin{definition}\label{definition:space_configurations}
	The space $\Omega(E,C)$ is defined as the space of all \textit{configurations} $\xi \in 2^{\mathbb{F}}$, which satisfy the following properties:
	\begin{enumerate}[(a)]
		\item $1 \in \xi$;
		\item $\xi$ is \textit{left convex}, i.e., if $x_1 \cdots x_n \in \xi$, where $x_1 \cdots x_n$ is a reduced word in $\mathbb{F}$ (so that $x_i \in E^1 \cup (E^1)^{-1}$ and $x_i \neq x_{i+1}^{-1}$ for $1 \leq i < n$), then $x_1 \cdots x_m \in \xi$ for all $1 \leq m \leq n$;
		\item for each $\alpha \in \xi$, the local configuration $\xi_{\alpha}$ is of exactly  one of the following forms:
		\begin{enumerate}
			\item[(c.1)] $\xi_{\alpha} = \{e^{-1} \mid e \in s^{-1}(w)\}$ for some source vertex $w \in E^{0,1}$;
			\item[(c.2)] $\xi_{\alpha} = \{e_X \mid X \in C_v\}$ for a single choice of an edge $e_X \in X$ for each $X \in C_v$, for some range vertex $v \in E^{0,0}$.
		\end{enumerate}
	\end{enumerate}
\end{definition}
In words, $\Omega(E,C)$ is the space of all subsets $\xi\subset\mathbb{F}$ containing $1$, being left convex, and whose local configurations at each point $\alpha$ consist of either all inverse edges having source a single vertex $w \in E^{0,1}$, or a choice of exactly one edge $e_X \in X$ for each $X \in C_v$, $v \in E^{0,0}$.

With the induced topology of $2^{\mathbb{F}}$, the space $\Omega(E,C)$ is a totally disconnected compact metrizable space. Thus, a basis of compact open sets for the topology is given by the ``cylinder sets''
$$\Omega_T := \{ \xi \in \Omega(E,C) \mid T \subseteq \xi\} \quad \text{for a finite subset } T\subseteq \mathbb{F} .$$
We denote the set of cylinders of the configuration space by $\mathcal Z (E,C)$ and if $T = \{ \alpha\}$, $\alpha \in \mathbb{F}$, we will simply write $\Omega_{\alpha}$ instead of $\Omega_{\{\alpha\}}$.


For $v \in E^0$, the space $\Omega_v$ is defined to be the space of all configurations ``based'' at $v$, that is, the set of all configurations $\xi \in \Omega(E,C)$
such that $\xi_1$ is of the form (c.1) for the specific source vertex $v$ in case $v \in E^{0,1}$ and of the form (c.2) for the specific range vertex $v$ in case $v \in E^{0,0}$.

We are now going to define a partial action of the free group $\F$ on $\Omega (E,C)$. 
For each $\alpha \in \F$, we have the open compact cylinder set $\Omega_{\alpha}$, and we have the homeomorphism
$$\theta_{\alpha} \colon \Omega_{\alpha^{-1}} \to \Omega_{\alpha}, \quad \theta_{\alpha}(\xi) = \alpha \cdot \xi,$$
where $\alpha \cdot \xi = \{\alpha \cdot \beta \mid \beta \in \xi\}$. For $\alpha = 1$, $\Omega_1 = \Omega(E,C)$ and $\theta_1 = \mathrm{id}_{\Omega(E,C)}$.

Geometrically, $\theta_\alpha (\xi)$ consists of a translation, that is, a change of the base point in the configuration $\xi$.

A word about notation. For two elements $\alpha,\beta\in \F$, we denote by $\alpha\cdot \beta$ the product of $\alpha$ and $\beta$ done in the free group $\F$ (which possibly includes cancellation of terms), and we just write $\alpha \beta$ to indicate the concatenation of $\alpha$ and $\beta$. Hence $\alpha\cdot \beta =\alpha \beta$ if and only if the last letter of $\alpha$ is not the inverse of the first letter of $\beta$. 

  The conditions (TT1) and (TT3) together imply that each configuration $\xi \in \Omega (E,C)$ is infinite in all the directions, that is, for each element $\alpha \in \xi$ there exists another element 
$\beta = \alpha x \in \xi$, where $x\in E^1\cup (E^1)^{-1}$. The condition (TT2) guarantees that the isotropy of the associated groupoid is topologically small (see below). The three conditions together guarantee that the space $\Omega (E,C)$ is a Cantor set. 

\begin{theorem} {\rm{\cite{ae14}}}
	\label{thm:charac-tame}
	Let $(E,C)$ be a finite bipartite separated graph. Then we have a $*$-isomorphism
	$$\mathcal O (E,C) \cong C(\Omega (E,C)) \rtimes _\theta \F \cong C^* (\mathcal G_{\F,\Omega(E,C)})$$
	for the partial action $\theta $ of $\F$ on $\Omega(E,C)$ defined above. 
	
	The isomorphism sends $p_v$ to $1_v \delta_1$ for $v\in E^0$, and $s_e$ to $1_{\Omega_e}\delta_e$ for $e\in E^1$. 
\end{theorem}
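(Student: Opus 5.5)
The second isomorphism $C(\Omega(E,C))\rtimes_\theta \F \cong C^*(\mathcal G_{\F,\Omega(E,C)})$ is Theorem \ref{thm:partial-cross-product}, applied once we check that $\theta$ is a topological partial action. Each $\Omega_\alpha$ is compact open, and $\theta_\alpha(\xi)=\alpha\cdot\xi$ is a homeomorphism $\Omega_{\alpha^{-1}}\to\Omega_\alpha$. For the condition $\theta_\alpha\circ\theta_\beta\subseteq\theta_{\alpha\cdot\beta}$, note that translation by a point of $\xi$ preserves membership in $\Omega(E,C)$: it preserves the local configurations, and it preserves left convexity because a configuration containing $1$ and $\alpha$ contains the whole reduced geodesic between them. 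So the work is the first isomorphism. I will build mutually inverse $*$-homomorphisms.

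\emph{Step 1: a tame family in the crossed product.} Set $P_v=1_{\Omega_v}\delta_1$ and $S_e=1_{\Omega_e}\delta_e$. I need relations (V), (E), (SCK1), (SCK2).
\begin{itemize}
\item (V): the sets $\Omega_v$ partition $\Omega(E,C)$, since by (c) every $\xi$ has $\xi_1$ of exactly one type, based at a unique vertex.
\item (E) and (SCK1): $S_e^*S_e=1_{\Omega_{e^{-1}}}\delta_1$, and $\xi\ni e^{-1}$ holds exactly when $\xi$ is based at $s(e)$ by (c.1). For $e\neq e'$ in the same $X$, we get $S_e^*S_{e'}=\theta_{e^{-1}}(1_{\Omega_e\cap\Omega_{e'}})\delta_{e^{-1}e'}=0$, because (c.2) picks exactly one edge from $X$.
\item (SCK2): for $X\in C_v$, the sum $\sum_{e\in X}S_eS_e^*=\sum_{e\in X}1_{\Omega_e}$ equals $1_{\Omega_v}$, again by (c.2).
\end{itemize}
Every element of the semigroup $U$ generated by the edges and their adjoints maps to some $1_{D}\delta_\alpha$. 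Its range projection is a characteristic function in the commutative algebra $C(\Omega)\delta_1$, so all the commutators $[uu^*,vv^*]$ vanish. This gives $\Phi\colon\mathcal O(E,C)\to C(\Omega)\rtimes_\theta\F$.

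\emph{Step 2: the inverse map.} In $\mathcal O(E,C)$ the generated semigroup is an inverse semigroup. For a reduced word $\alpha=x_1\cdots x_n$ set $s_\alpha=s_{x_1}\cdots s_{x_n}$, with $s_{e^{-1}}:=s_e^*$, and set $e_\alpha=s_\alpha s_\alpha^*$. These $e_\alpha$ commute. I will show that the commutative $C^*$-subalgebra $\mathcal D$ they generate has spectrum $\Omega(E,C)$, via $\xi\mapsto$ the character sending $e_\alpha$ to $1$ if $\alpha\in\xi$ and to $0$ otherwise.
\begin{itemize}
\item Surjectivity onto characters: given a character $\chi$, the set $\xi=\{\alpha:\chi(e_\alpha)=1\}$ contains $1$ if we use the vertex projections. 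It is left convex because $e_{\alpha\beta}\le e_\alpha$ for reduced concatenations. The local rules (c.1)/(c.2) follow from (SCK1) and (SCK2) conjugated by $s_\alpha$, together with $s^{-1}(w)$-relations coming from $s_e^*s_e=p_{s(e)}$.
\item The converse, that every configuration gives a nonzero character, I will obtain from Step 1: $\Phi$ separates these projections, since $\Phi(e_\alpha)=1_{\Omega_\alpha}$ and the sets $\Omega_\alpha$ are nonempty by a tree-building argument.
\end{itemize}
Then $C(\Omega)\cong\mathcal D$ via $1_{\Omega_\alpha}\mapsto e_\alpha$. The map $u_\alpha\colon\alpha\mapsto s_\alpha$ is a partial representation of $\F$: $s_\alpha s_\beta s_\beta^*=s_{\alpha\cdot\beta}s_\beta^*$, which holds because $s_xs_{x^{-1}}s_x=s_x$. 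It is covariant: $s_\alpha e_\beta s_\alpha^*=e_{\alpha\cdot\beta}e_\alpha$, matching $\theta_\alpha$. The universal property of partial crossed products (Exel's book, covariant pairs) then yields $\Psi\colon C(\Omega)\rtimes_\theta\F\to\mathcal O(E,C)$ with $\Psi(f\delta_\alpha)=\pi(f)s_\alpha$.

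\emph{Step 3: the maps are inverse.} Both composites fix generators: $\Psi\Phi(s_e)=e_e s_e=s_e$. For $\Phi\Psi$, the element $1_{\Omega_\alpha}\delta_\alpha$ maps to $\Phi(s_\alpha)=1_{\Omega_\alpha}\delta_\alpha$. Since the elements $1_{\Omega_T}\delta_\alpha$ span densely, both composites are the identity.

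The main obstacle I expect is Step 2's identification of $\mathcal D$ with $C(\Omega(E,C))$. One must show that the projections $e_\alpha$, under only the given relations, satisfy exactly the Boolean relations encoded by configurations: no hidden extra relations, and the correct reduced-word behavior when a non-reduced product collapses. My first approach is to prove in $\mathcal O(E,C)$ the identity $s_\alpha=s_{\alpha'}$ for any word $\alpha$ with reduced form $\alpha'$, up to multiplication by a product of the $e_\beta$'s, so that $\mathcal D$ is spanned by products of the $e_\alpha$. I will then use the map $\Phi$ to certify that the resulting description is faithful.
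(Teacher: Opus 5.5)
The paper gives no proof of its own here; the statement is quoted from \cite{ae14}. There, as I recall, it is obtained from Exel's general theory of tame families of partial isometries subject to relations on their range projections (see \cite{ExelBook}). In that theory a tame family indexed by $E^1$ corresponds to a semi-saturated partial representation of $\F$. The universal $C^*$-algebra of such families satisfying given relations is then a partial crossed product by $\F$ over the closed invariant subspace of convex subsets of $\F$ containing $1$ cut out by those relations. The graph-specific work reduces to rewriting (V), (E), (SCK1), (SCK2) as relations among the $e_\alpha$ and recognizing that subspace as $\Omega(E,C)$, which is essentially what your Step 1 and the first bullet of Step 2 do. Your proposal is a correct, self-contained specialization of this. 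You gain explicit inverse maps and avoid the general machinery, but you must check by hand that $\alpha\mapsto s_\alpha$ is a partial representation and that $(\pi,s)$ is covariant, which the general theorem handles uniformly.

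Some details need correcting.

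\emph{Nonemptiness of $\Omega_\alpha$.} Your assertion that all $\Omega_\alpha$ are nonempty is false. If $e,f\in E^1$ and $ef\in\xi$, the local configuration $\xi_e$ contains both $e^{-1}$ and $f$, which neither (c.1) nor (c.2) allows. So $\Omega_{ef}=\emptyset$, matching $s_es_f=0$ (since $s(e)\in E^{0,1}$ and $r(f)\in E^{0,0}$). It is also not what you need: for $\xi\in\Omega(E,C)$ the required character is simply $\mathrm{ev}_\xi\circ\Phi|_{\mathcal D}$.

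\emph{Identifying $\mathcal D$.} Better still, your first bullet shows that every character $\chi$ of $\mathcal D$ coincides on the generators $e_\alpha$, hence everywhere, with $\mathrm{ev}_{\xi_\chi}\circ\Phi$. So $\Phi|_{\mathcal D}$ is injective, and its image is all of $C(\Omega(E,C))$ by Stone--Weierstrass, since the $1_{\Omega_\alpha}$ separate points. Thus $\pi=(\Phi|_{\mathcal D})^{-1}$. The ``main obstacle'' of your last paragraph is therefore already resolved, and no normal form for elements of $U$ is required.

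\emph{The partial-representation identity.} The identity $s_\alpha s_\beta s_\beta^*=s_{\alpha\cdot\beta}s_\beta^*$ does not follow from $s_xs_x^*s_x=s_x$ alone. Write $\alpha=\alpha_0\gamma$ and $\beta=\gamma^{-1}\beta_0$ with $\alpha_0\beta_0$ reduced. You need $e_\gamma e_{\beta_0}s_\gamma=e_{\beta_0}s_\gamma$, which uses that $e_\gamma$ and $e_{\beta_0}$ commute and that $s_\gamma$ is a partial isometry, i.e.\ tameness again. The same is needed for your covariance identity.

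\emph{Remaining checks.} The formula $\Phi(s_\alpha)=1_{\Omega_\alpha}\delta_\alpha$ for reduced $\alpha$ uses left convexity of configurations. In Step 3 you should also check $\Psi\Phi(p_v)=p_v$. This follows from $1_{\Omega_v}=\sum_{e\in X}1_{\Omega_e}$ for $X\in C_v$, $v\in E^{0,0}$, and from $1_{\Omega_w}=1_{\Omega_{e^{-1}}}$ for any $e\in s^{-1}(w)$, using $s(E^1)=E^{0,1}$.
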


It follows that a concrete groupoid model for $\mathcal O (E,C)$ can be described as follows. 

Let $(E,C)$ be a finite bipartite separated graph satisfying the three-twos rule. Let $\mathcal G_{E,C}$ be the groupoid consisting of all the triples $(\xi',g,\xi)$, where $\xi,\xi'\in \Omega (E,C)$, $g^{-1}\in \xi$ and $g\cdot \xi = \xi'$.  The product is defined by
$$(\xi'',h,\xi')(\xi',g,\xi) = (\xi'', h\cdot g, \xi),$$
and the inverse of $(\xi', g,\xi)$ is $(\xi, g^{-1},\xi')$. 
The space $\mathcal G_{E,C}^0$ of units is $\Omega (E,C)$, and $s(g\cdot \xi,g,\xi)= \xi$, $r(g\cdot \xi,g,\xi)= g\cdot \xi$. 

The topology of $\mathcal G_{E,C}$ has a basis of open compact sets given by the sets $\Omega_{(T',g,T)}$, where $T$ is a finite, left convex subset of $\F$, $g^{-1}\in T$ and $T'=g\cdot T$, and where 
$$\Omega_{(T',g,T)} = \{ (g\cdot \xi,g,\xi): \xi \in \Omega_T\}. $$
Of course, the groupoid $\mathcal G _{E,C}$ coincides with the transformation groupoid $\mathcal G_{\F,\Omega(E,C)}$ associated to the partial action of $\F$ on $\Omega (E,C)$. 

Concerning the description of the tame Leavitt path algebra $\Lab (E,C)$, one can use the same groupoid $\mathcal G_{E,C}$ as above. In this case one recovers the $*$-algebra $\Lab (E,C)$ as the {\it Steinberg algebra} $A(\mathcal G _{E,C})$, which is the algebra of locally constant, compactly supported functions on $\mathcal G_{E,C}$, endowed with the convolution product.  We have

\begin{theorem} {\rm{\cite{ae14}}}
	\label{thm:charac-tameLeavitt}
	Let $(E,C)$ be a finite bipartite separated graph. Then we have  $*$-isomorphisms
	$$\Lab  (E,C) \cong A(\Omega (E,C)) \alrtimes _\theta \F \cong A(\mathcal G_{E,C}).$$
	for the partial action $\theta $ of $\F$ on $\Omega(E,C)$ and the groupoid $\mathcal G_{E,C}$ defined above. 
\end{theorem}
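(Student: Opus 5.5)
The plan is to mirror the proof of Theorem \ref{thm:charac-tame}, replacing $C^*$-algebras by $*$-algebras and continuous functions by locally constant ones. The second isomorphism, $A(\Omega(E,C))\alrtimes_\theta \F\cong A(\mathcal G_{E,C})$, is the algebraic analogue of Theorem \ref{thm:partial-cross-product}. Here $A(\Omega(E,C))$ denotes the locally constant $\C$-valued functions on the Cantor-type space $\Omega(E,C)$, and $A(\mathcal G_{E,C})$ is the Steinberg algebra of the transformation groupoid $\mathcal G_{\F,\Omega(E,C)}$. I would define
$$\omega\colon A(\mathcal G_{E,C})\to A(\Omega(E,C))\alrtimes_\theta\F,\qquad \omega(f)=\sum_{g\in\F} f_g\delta_g,\quad f_g(\xi)=f(\xi,g,g^{-1}\cdot\xi),$$
where $f_g$ is supported in $\Omega_g$. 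The steps are:
\begin{itemize}
\item check that $f_g$ is locally constant with compact support in $\Omega_g$;
\item check that only finitely many $g$ occur, because a compact open subset of $\mathcal G_{E,C}$ meets only finitely many of the clopen sets $\{g\}\times\Omega_{g^{-1}}$;
\item verify that convolution corresponds to the partial crossed product multiplication, exactly as in \cite[Example 9.1.7]{SimsCRMNotes} and \cite{Abadie}, and that $\omega$ is bijective.
\end{itemize}

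For the first isomorphism I would define $\Phi\colon \Lab(E,C)\to A(\Omega(E,C))\alrtimes_\theta\F$ by $v\mapsto 1_{\Omega_v}\delta_1$ and $e\mapsto 1_{\Omega_e}\delta_e$. Most relations are routine to check.
\begin{itemize}
\item (V) holds since the sets $\Omega_v$ partition $\Omega(E,C)$.
\item (E) holds since $\Omega_e\subseteq\Omega_{r(e)}$ and $\theta_{e^{-1}}(\Omega_e)=\Omega_{e^{-1}}=\Omega_{s(e)}$; the last equality uses (c.1).
\item (SCK1) holds because for $e\ne e'$ in the same $X$, the sets $\Omega_e$ and $\Omega_{e'}$ are disjoint by (c.2).
\item (SCK2) holds because $\Omega_v=\bigsqcup_{e\in X}\Omega_e$, again by (c.2).
\end{itemize}
The tameness relations $[uu^*,vv^*]=0$ also hold, because the image of every $u\in U$ has the form $1_{\Omega_\alpha}\delta_\alpha$. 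The elements $uu^*$ therefore map to characteristic functions in the commutative algebra $A(\Omega)\delta_1$. So $\Phi$ factors through $\Lab(E,C)$.

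Surjectivity of $\Phi$ amounts to showing that every $1_{\Omega_T}\delta_g$ lies in the image. Here $T$ is finite and left convex, and by (c.2) it can be refined to a disjoint union of such cylinders. Such an element is a product of the form $uu^*\cdots$ times $\alpha$-monomials, which is a standard induction on $|T|$.

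The main obstacle is injectivity of $\Phi$. My first approach is to build an inverse $\Psi$ by defining the $\Psi(1_{\Omega_T}\delta_g)$ as the corresponding words $e(T)\,u_g$ in $\Lab(E,C)$. This requires knowing that the commutative subalgebra of $\Lab(E,C)$ generated by the projections $uu^*$ is isomorphic to $A(\Omega(E,C))$, via $uu^*\mapsto 1_{\Omega_{\alpha}}$ with $\alpha$ the reduced word of $u$. To prove that, I would identify its spectrum (its Boolean algebra of idempotents) with the set of characters. Each character determines the set of words $\alpha$ whose idempotent $e_\alpha$ it does not annihilate; I would show this set is a configuration satisfying (a)--(c), using (SCK1)/(SCK2) to get exactly the local forms (c.1) and (c.2). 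Conversely, every configuration defines a character, which gives a Stone duality isomorphism.

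Once that is established, $\Lab(E,C)$ is graded over $\F$, with homogeneous components spanned by $e\,u_g$. Comparing these components with $A(\Omega_g)\delta_g$ shows that $\Psi$ is a well-defined inverse of $\Phi$.
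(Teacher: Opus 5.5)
Your strategy works: after two repairs it gives a complete proof of the algebraic statement, by a more hands-on route than the cited one.

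\textbf{The repairs.} First, two claims are false as written. It is not true that every $u\in U$ maps to $1_{\Omega_\alpha}\delta_\alpha$, nor that $uu^*$ corresponds to $1_{\Omega_\alpha}$ with $\alpha$ the reduced word of $u$. For $u=e^*e=s(e)$ the reduced word is $1$, but $\Phi(uu^*)=1_{\Omega_{s(e)}}\delta_1\neq 1$.

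What is true is the following.
\begin{enumerate}
\item $\Phi(u)=1_K\delta_\alpha$ for some clopen $K\subseteq\Omega_\alpha$, which is enough for the tameness relations.
\item $\Phi(u_\alpha)=1_{\Omega_\alpha}\delta_\alpha$ when $u_\alpha$ is the product of the letters of a reduced word $\alpha$, by left convexity.
\end{enumerate}
So the right generators of the diagonal $\mathcal D$ are $e_\alpha:=u_\alpha u_\alpha^*$, which is what you actually use later.

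Second, you use without proof that $\mathcal D$ is generated by the $e_\alpha$ and that $\Lab(E,C)=\sum_\alpha \mathcal D\,u_\alpha$. Both follow from a normal form. For $u=x_1\cdots x_n$ with reduced form $\alpha$, one has $u=(uu^*)\,u_\alpha$ and $uu^*=\prod_{i=0}^n e_{\mathrm{red}(x_1\cdots x_i)}$. This holds because the image of $U\cup\{1\}$ in $\Lab(E,C)$ is an inverse monoid generated by $E^1$, hence a quotient of the free inverse monoid (Munn trees). This is where tameness is really used.

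With these in hand you do not need $\Psi$ to be multiplicative. Write $\Phi(d)=\phi(d)\delta_1$ for $d\in\mathcal D$. Then $\Phi(d\,u_\alpha)=\phi(de_\alpha)\delta_\alpha$, and $d\,u_\alpha=0$ if and only if $de_\alpha=0$. Since $\bigoplus_\alpha A(\Omega_\alpha)\delta_\alpha$ is a direct sum, injectivity of $\Phi$ reduces to injectivity of $\phi$.

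That follows from your character computation:
\begin{enumerate}
\item characters separate points of $\mathcal D$, because $\mathcal D$ is spanned by commuting idempotents;
\item once $\xi_\chi\in\Omega(E,C)$ is known, $\chi=\mathrm{ev}_{\xi_\chi}\circ\phi$, since both agree on the generators $e_\alpha$.
\end{enumerate}
Showing $\xi_\chi\in\Omega(E,C)$ also needs (V), (E) and bipartiteness to kill the remaining letters, not only (SCK1)/(SCK2). The converse direction, that configurations give characters, is then automatic from $\Phi$.

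\textbf{Comparison of routes.} The survey gives no argument beyond the citation. In \cite{ae14} the result is obtained within Exel's framework of partial representations of free groups arising from tame families of partial isometries subject to relations (cf.\ \cite{ExelBook}). There the spectrum of the diagonal is identified with the configuration space.

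Your proof is a hands-on instance of that framework. The $\F$-grading together with the inverse-semigroup normal form replaces the semi-saturated partial representation $\alpha\mapsto u_\alpha$. Your computation of $\xi_\chi$ is exactly the spectrum identification.

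Your version is elementary and self-contained in the algebraic setting, where there is no completion and injectivity can be read off homogeneous components. The general machinery has the advantage of also covering the $C^*$-version, Theorem \ref{thm:charac-tame}. There, for the full crossed product, a component-wise injectivity argument is not available, and one must build the inverse map from a genuine covariant pair.
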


\subsection{The type semigroup of a separated graph I}
\label{subsect:type-semigroup}

We are now interested in the properties of the type semigroup associated to the groupoid model of a separated graph. This semigroup can be computed in the case of a finite bipartite separated graph, using a construction from \cite{ae14}, see Section \ref{sect:specialprops} for details. When applied to the $(m,n)$-dynamical system (Example \ref{exam:m,ndyn-system}) this produces examples of $(m,n)$-paradoxes. 

Before we give the promised description, we need to introduce the subject of $(m,n)$-paradoxes. 

Let $G$ be a group acting on a set $X$. In the setting of the classical Banach-Tarski paradox, one is interested in the study of equidecomposability for subsets of $X$. Recall that two subsets $U$ and $V$ of $X$ are {\it $G$-equidecomposable} if there are decompositions
$$U=U_1\sqcup U_2\sqcup \cdots \sqcup U_n, \qquad V=V_1\sqcup V_2\sqcup \cdots \sqcup V_n$$
and elements $g_1,\dots , g_n\in G$ such that $g_iU_i=V_i$ for $i=1,\dots , n$. 

A subset $E$ of $X$ is called $G$-paradoxical if there are two disjoint subsets $E_1$ and $E_2$ of $E$ such that each $E_i$ is $G$-equidecomposable to $E$. Tarski's Theorem states that a subset $E$ of $X$ is {\it not} $G$-paradoxical if and only if there is a finitely additive invariant measure $\mu\colon \mathcal P (X)\to [0,\infty]$ such that $\mu (E)=1$.  Tarski's Theorem can be
expressed as a result on the type semigroup $\Typ (X,G)$ associated to the action of $G$ on $X$, see
\cite[Theorem 11.1 and Corollary 11.2]{TomWagon}. This type semigroup coincides with the type semigroup associated to the transformation groupoid of the action which, in this case, is a discrete groupoid.

As Tarski shows in
\cite[Theorem 16.12]{Tarski}, his result holds also if we replace
global actions with partial actions. 

In the topological setting, that is, when $G$ is a discrete group acting by (partial) homeomorphisms on a locally compact Hausdorff topological space $X$, we may restrict the sets considered to the family $\mathbb K$ of open compact subsets of $X$. In this way, a version of the type monoid may be constructed, see for instance \cite[Section 7]{ae14}. This type monoid will be denoted here by
$\Typ (X,G,\mathbb K)$ in order to distinguish it from the set-theoretical type monoid $\Typ (X,G)$. Note that $\Typ (X,G,\mathbb K)\cong \Typ (\mathcal G _{G,X})$, where $\mathcal G_{G,X}$ is the transformation groupoid of the (partial) action.

The proof of Tarski's Theorem is based on
two special properties of the type semigroup $\Typ (X,G)$, and a general semigroup-theoretic result.

If $S$ is a commutative monoid, we consider the so-called {\it algebraic pre-order} on $S$, defined by $x\le y$ if and only if there is $z\in S$ such that $y=x+z$. In the present notes, we only consider the algebraic pre-order, but of course there are in general other pre-orders on semigroups, which may be of interest in specific contexts.  

The crucial observation linking paradoxicality of subsets of $X$ with the type monoid $\Typ (X,G)$ is that $E\subseteq X$ is $G$-paradoxical if and only if $2[E]\le [E]$ in the type semigroup $\Typ (X,G)$.  
Observe also that finitely additive invariant measures $\mu$ correspond exactly with semigroup homomorphisms $\Typ (X,G)\to [0,\infty]$.

With these preliminaries, we can state Tarski's dichotomy as follows:

\begin{theorem}
	\label{thm:TarskiDichotomy}
	Let $G$ be a group acting on a set $X$. For a subset $E$ of $X$, the following conditions are equivalent:
	\begin{enumerate}
		\item[(a)] $[E]$ is not paradoxical, that is, $2[E]\nleq [E]$ in $\Typ (X,G)$.
		\item[(b)] There exists a monoid homomorphism $\mu\colon \Typ (X,G)\to [0,\infty]$ such that $\mu ([E])= 1$.   
	\end{enumerate}
\end{theorem}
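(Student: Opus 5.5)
The implication (b)$\Rightarrow$(a) is immediate, and (a)$\Rightarrow$(b) is the real content, which I would obtain from the semigroup-theoretic result of Tarski together with two structural properties of $\Typ (X,G)$.

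For (b)$\Rightarrow$(a): if $\mu\colon \Typ (X,G)\to[0,\infty]$ is a monoid homomorphism with $\mu([E])=1$ and we had $2[E]\le [E]$, then $[E]=2[E]+z$ for some $z$. Hence $1=\mu([E])=2+\mu(z)\ge 2$, which is a contradiction.

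For (a)$\Rightarrow$(b), I would first record two properties of $S=\Typ (X,G)$. The first is that $S$ is a \emph{refinement monoid}: if $a_1+a_2=b_1+b_2$, there are $c_{ij}$ with $a_i=\sum_j c_{ij}$ and $b_j=\sum_i c_{ij}$. This follows by intersecting the pieces of the two equidecompositions, since every element of $S$ is represented by a single subset of $X\times\N$. The second is the \emph{cancellation of multiples} property, $na\le nb \Rightarrow a\le b$. This is the Banach--Schröder--Bernstein plus König/Hall matching argument from \cite[Theorem 10.20]{TomWagon}: from an equidecomposition witnessing $n[A]\le n[B]$ one builds a bipartite $n$-regular-type graph between $A\times\{1..n\}$ and $B\times\{1..n\}$, extracts a perfect matching via König's theorem, and the matching assembles into finitely many pieces witnessing $[A]\le[B]$.

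Then I would invoke Tarski's theorem on commutative monoids: if $S$ is a commutative monoid and $e\in S$, there is a homomorphism $\mu\colon S\to[0,\infty]$ with $\mu(e)=1$ if and only if $(n+1)e\nleq ne$ for all $n$. This is proved by a Hahn--Banach/Zorn-type extension of the partial state defined on $\{ke\}$ to all of $S$, using the algebraic preorder. Finally, hypothesis (a) gives $2e\nleq e$, where $e=[E]$. If we had $(n+1)e\le ne$ for some $n$, iterating would give $2ne\le ne$, i.e. $n(2e)\le n(e)$, and cancellation of multiples would then yield $2e\le e$, a contradiction. So the hypothesis of Tarski's monoid theorem holds and produces the required $\mu$.

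The main obstacle is the cancellation property $na\le nb\Rightarrow a\le b$ in $\Typ (X,G)$. It is the only genuinely combinatorial step, requiring the infinite König matching theorem, and it is exactly what fails in the topological type semigroup $\Typ(X,G,\mathbb K)$. I would present it following \cite{TomWagon}, while citing the monoid extension theorem as a known result.
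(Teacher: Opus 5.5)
Your proof is correct, but it differs from the paper's in the cancellation property it feeds into Tarski's monoid theorem (Theorem~\ref{thm:tarski-semigroup}).

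The paper uses two properties of $\Typ (X,G)$: the Schr\"oder--Bernstein property, and $n$-cancellation for \emph{equalities}, $nx=ny\Rightarrow x=y$. From $(n+1)e\le ne$ it gets $2ne\le ne\le 2ne$. Schr\"oder--Bernstein then gives $2ne=ne$, and $n$-cancellation gives $2e=e$.

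You instead use the single stronger property $na\le nb\Rightarrow a\le b$, i.e.\ unperforation. This turns $n(2e)\le ne$ directly into $2e\le e$, so you avoid Schr\"oder--Bernstein entirely. It is essentially the alternative the paper mentions via almost unperforation (condition (c)), which unperforation implies. The price is in the combinatorics: the paper needs only K\"onig's theorem for $n$-regular bipartite multigraphs together with Banach--Schr\"oder--Bernstein, while your property needs a matching theorem for an unbalanced graph. The refinement property you record is never used and can be dropped.

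Your sketch of that matching step is slightly off in two places. First, the graph should have vertex set $A\sqcup B$, collapsing the $n$ copies. On $A\times\{1,\dots,n\}\sqcup B\times\{1,\dots,n\}$ the given injection is already a matching and gives nothing. Second, the graph is not $n$-regular: points of $A$ have degree exactly $n$, but points of $B$ only at most $n$.

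So you need Hall's theorem for infinite bipartite graphs with finite left degrees. Hall's condition $|N(F)|\ge |F|$ for finite $F\subseteq A$ follows by counting edges. This yields a matching saturating $A$, not a perfect matching; a perfect matching would give $[A]=[B]$, which is false in general. Since every edge carries one of finitely many labels, the matching then splits $A$ into finitely many pieces witnessing $[A]\le [B]$, as you say.
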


On the other hand, Tarski's semigroup theoretic result reads as follows:

\begin{theorem} {\rm{\cite[Theorem 11.1]{TomWagon}}}
	\label{thm:tarski-semigroup}
	Let $S$ be a commutative monoid and let $e\in S$.
	Then the following conditions are equivalent:
	\begin{enumerate}
		\item[(a)] For any $n\in \N$, $(n+1)e \nleq ne$.
		\item[(b)] There is a monoid homomorphism $\mu \colon S \to [0,\infty]$ such that $\mu (e)= 1$.  
	\end{enumerate}
\end{theorem}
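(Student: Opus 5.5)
The implication (b)$\Rightarrow$(a) is immediate. A monoid homomorphism $\mu\colon S\to[0,\infty]$ is monotone for the algebraic pre-order, since $y=x+z$ gives $\mu(y)=\mu(x)+\mu(z)\ge\mu(x)$. So $(n+1)e\le ne$ would give $n+1\le n$ after applying $\mu$, which is absurd because $\mu(e)=1$ is finite.

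For (a)$\Rightarrow$(b), we first record a consequence of (a): $ke\nleq ne$ for all $k>n$. Indeed, if $ne=ke+z$, then $ne=(n+1)e+\bigl((k-n-1)e+z\bigr)$, so $(n+1)e\le ne$, contradicting (a).

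We then split $S$ into two parts:
\begin{itemize}
\item the elements $x$ that are not dominated by any multiple of $e$, on which we will set $\mu(x)=\infty$;
\item the submonoid $S_e=\{x\in S: x\le ke \text{ for some } k\}$, which contains $e$.
\end{itemize}
The complement $S\setminus S_e$ is an ideal, because $x\le x+y$ for every $y$. Hence any homomorphism $\mu_0\colon S_e\to[0,\infty)$ with $\mu_0(e)=1$ extends to all of $S$ by the value $\infty$ off $S_e$, and this extension is still additive. So it suffices to construct $\mu_0$ on $S_e$.

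On $S_e$ we use the gauge
\[
p(x)=\inf\{m/n : n,m\in\N,\ nx\le me\}.
\]
It is finite, subadditive and positively homogeneous. It also satisfies $p(e)=1$, since $ne\le me$ with $m<n$ is excluded by the consequence of (a) above. The goal is an additive $\mu_0$ with $\mu_0\le p$ and $\mu_0(e)=1$; such a $\mu_0$ is automatically nonnegative.

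We plan to obtain $\mu_0$ by a Hahn--Banach/compactness scheme. Let $\mathcal F$ range over the finite subsets of $S_e$ containing $e$. For each such $\mathcal F$, we look for a function $\mu_{\mathcal F}\colon\mathcal F\to[0,p(x)]$ with $\mu_{\mathcal F}(e)=1$ that is additive on every relation $x+y=z$ with $x,y,z\in\mathcal F$. The set of such functions is a closed subset of the compact space $\prod_{x\in S_e}[0,p(x)]$ (leaving coordinates outside $\mathcal F$ free). These closed sets decrease as $\mathcal F$ grows, so they have the finite intersection property. Tychonoff then yields a point in the intersection, which is the desired $\mu_0$.

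The main obstacle is the finite step, namely showing that each finite system of constraints is feasible. This is a finite linear system over $\mathbb R$: equalities $\mu(x)+\mu(y)=\mu(z)$, bounds $0\le\mu(x)\le p(x)$, and $\mu(e)=1$. We would apply Farkas' lemma, or Hahn--Banach on the finite-dimensional space $\mathbb R^{\mathcal F}$ modulo the relations. Infeasibility would produce a rational combination of the relations and bounds certifying $\mu(e)<1$. After clearing denominators, this certificate would assemble, inside $S$, into an inequality of the form $ne+w\le n'e+w$ with $n>n'$. We then need a cancellation trick to remove $w$: since $w\le ke$ for some $k$, iterate the inequality to get $Nn\,e+w\le Nn'e+w$ for large $N$, and bound $w$ above by $ke$ to reach $ke'\le k''e$ with $k'>k''$. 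That contradicts (a).

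Making this last bookkeeping precise is the delicate part, and it is exactly Tarski's argument. If the Farkas route gets messy, we would fall back on the classical proof in \cite{TomWagon} via a Hall-type marriage lemma.
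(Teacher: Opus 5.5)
Your proof is correct, but it does not follow the route the paper indicates. The paper gives no proof of its own. It cites \cite[Theorem 11.1]{TomWagon} and remarks that the statement, even for preordered monoids \cite[Corollary 2.18]{KMP25}, follows from the injectivity of $[0,\infty]$ in the category of positively ordered commutative monoids \cite[Example 3.10]{weh1992}.

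Along that route the proof is one line. Give the cyclic submonoid $\Z^+e$ the preorder induced from $S$. The assignment $ke\mapsto k$ is then well defined and monotone precisely because of your observation that $ke\le ne$ forces $k\le n$. Injectivity extends it to a monotone homomorphism $\mu\colon S\to[0,\infty]$ with $\mu(e)=1$.

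You are in effect proving this instance of injectivity by hand:
\begin{enumerate}
\item the absorbing complement $S\setminus S_e$ accounts for the value $\infty$;
\item Tychonoff reduces the problem to finite linear systems;
\item LP duality plus the iteration $w+Nte\le w+Nse$ with $w\le ke$ is where (a) does its real work, compensating for the lack of cancellation in $S$.
\end{enumerate}
The citation is shorter and yields the preordered version for free. Your argument is elementary and self-contained.

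The step you call delicate closes as you predict. First replace the bounds $p(x)$ by integers $k_x$ with $x\le k_xe$. Any nonnegative additive $\mu$ with $\mu(e)=1$ satisfies $\mu(x)\le k_x$ anyway, so nothing is lost, and the gauge $p$ becomes unnecessary. This also avoids perturbing a certificate whose bounds $p(x)$ may be irrational.

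All data are now integral, so an infeasibility certificate consists of an integer $t$ and integers $c_r$, one for each relation $x_r+y_r=z_r$ with $x_r,y_r,z_r\in\mathcal F$. They satisfy: $\ell=\sum_r c_r(\delta_{x_r}+\delta_{y_r}-\delta_{z_r})+t\delta_e$ has $s:=\sum_x \ell_x^+k_x<t$.

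Splitting the $c_r$ by sign gives $A,B\in(\Z^+)^{\mathcal F}$ with $A+\ell^-+t\delta_e=B+\ell^+$. Writing $\widehat A=\sum_x A_x x$, the relations give $\widehat A=\widehat B=:w$, and $w\in S_e$. Hence $w+te\le w+\widehat{\ell^-}+te=w+\widehat{\ell^+}\le w+se$ with $s<t$, and your iteration finishes the argument.

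So no fallback is needed. A Hall-type matching argument would not be available here in any case, since it needs an underlying set to match along, which an abstract monoid does not supply.
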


This theorem has been generalized to arbitrary commutative pre-ordered monoids, see for instance \cite[Corollary 2.18]{KMP25}. It is worth to mention here that this general result is a consequence of the 
injectivity of the monoid $[0,\infty]$ in the category of positively ordered commutative monoids, see \cite[Example 3.10]{weh1992}. 

We now see that Tarski's dichotomy for an arbitrary commutative monoid $S$ holds exactly when $S$ has {\it plain paradoxes}, a term borrowed from \cite{KMP25}, as follows.  

\begin{definition}
	\label{def:plain-paradox} 
	Let $S$ be a commutative monoid. For $x,y\in S$, write $x<_s y$ in case there exists $n\in \N$ such that $(n+1)x\le ny$.  We say that $S$ has {\it plain paradoxes} if $x<_s x$ implies $2x\le x$.  
\end{definition}

When $S$ has plain paradoxes, for any element $x$ of $S$ such that $2x\nleq x$ we can find a monoid homomorphism $\mu \colon S\to [0,\infty]$ such that $\mu (x)=1$. In the context of operator algebras, this 
might lead to dichotomy results, such as the ones obtained in e.g. \cite{ACAHMTolich}, \cite{bl20}, \cite{KMP25}, \cite{ma} and \cite{RS}. 
Because of its connections with graph theory, we state here a result from \cite{KMP25}. We refer to that paper for all the undefined terms appearing in the statement.    

\begin{theorem} {\rm{cf. \cite[Theorem 7.15]{KMP25}}}
	\label{thm:DichotomyEP}
	Let $(G,E)$ be a self-similar action, with $E$ a row-finite graph without sources. 
	Assume that $\mathcal O _{G,E}$ is simple. Then the following conditions are equivalent:
	\begin{enumerate}
		\item $\mathcal O _{G,E}$ is not purely infinite,
		\item $\mathcal O _{G,E}$ is stably finite,
		\item $\mathcal O _{G,E}$ has a faithful semifinite lower semicontinuous trace,
		\item The graph $C^*$-algebra $C^*(E_G)$ is not purely infinite,
		\item The graph $C^*$-algebra $C^*(E_G)$ is stably finite,
		\item The type semigroup $\Typ (\mathcal G_{E,C})\cong M(E)_G = M(E_G)$ is stably finite, that is, $x+y=x$ implies $y=0$, for all $x,y\in \Typ (\mathcal G _{G,E})$.   
	\end{enumerate}
\end{theorem}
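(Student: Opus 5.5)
We prove the cycle of implications by splitting the six conditions into a ``graph side'' (4)--(6) and an ``$\mathcal O_{G,E}$ side'' (1)--(3). The bridge between the two sides is the type semigroup, computed in \eqref{eq:type-semigroup-ssactions} as $\Typ(\mathcal G_{G,E})\cong M(E)_G\cong M(E_G)$ via Lemma \ref{lem:coinvariants}.

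\emph{Graph side.} We first show (4)$\iff$(5)$\iff$(6) purely for the graph $E_G$. By Theorem \ref{thm:APM}, $\V(C^*(E_G))\cong M(E_G)$, and stable finiteness of $C^*(E_G)$ is exactly the statement that $\V(C^*(E_G))$ has no nonzero $y$ with $x+y=x$. This gives (5)$\iff$(6). For (4)$\iff$(5) we need simplicity, or at least a dichotomy, for $C^*(E_G)$. We deduce from the simplicity of $\mathcal O_{G,E}$ (via \cite[Theorem 16.1]{ep17}) that $E$ is $G$-cofinal, so that $E_G$ is cofinal. For a cofinal graph one knows that $C^*(E_G)$ is either purely infinite (when every vertex connects to a cycle with an entry) or AF-like/stably finite (no cycle with an entry). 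In the latter case $M(E_G)$ is stably finite, while in the former every $a_v$ is properly infinite in $M(E_G)$. This is the standard dichotomy for graph monoids of cofinal graphs. One subtlety: $E_G$ may have cycles without entries even when $\mathcal O_{G,E}$ is simple. In that case $C^*(E_G)$ is a matrix algebra over $C(\mathbb T)$ or a similar non-purely-infinite algebra, and we must check it is still stably finite. This is fine because such $M(E_G)$ is cancellative.

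\emph{Algebra side.} (3)$\Rightarrow$(2): a faithful semifinite lsc trace on a simple algebra forbids infinite projections in matrix amplifications. (2)$\Rightarrow$(1) is immediate for a nonzero simple algebra.

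The remaining links (1)$\Rightarrow$(6) and (6)$\Rightarrow$(3) are where the groupoid enters.

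For (6)$\Rightarrow$(3): a stably finite $M(E_G)$ coming from a cofinal graph has no paradoxical elements, so by Tarski's theorem (Theorem \ref{thm:tarski-semigroup}) there is a nonzero homomorphism $\mu\colon\Typ(\mathcal G_{G,E})\to[0,\infty]$ that is finite on some $[\mathcal Z(v)]$. Cofinality makes $\mu$ faithful and finite on all generators. Such a $\mu$ is precisely a $\mathcal G_{G,E}$-invariant regular measure on $E^\infty$ that is finite on compact open sets. Composing with the conditional expectation onto $C_0(E^\infty)$ gives a faithful semifinite lower semicontinuous trace on $C^*_r(\mathcal G_{G,E})$. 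We then need $\mathcal O_{G,E}=C^*(\mathcal G_{G,E})$ to agree with the reduced algebra; simplicity of $\mathcal O_{G,E}$ forces this, since the full algebra surjects onto the reduced one with a kernel that must vanish.

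For (1)$\Rightarrow$(6), we argue contrapositively. If $M(E_G)$ is not stably finite, then by the cofinal graph monoid dichotomy every nonzero element of $\Typ(\mathcal G_{G,E})$ is properly infinite, i.e.\ $2x\le x$. Compact open subsets of $E^\infty$ are finite unions of cylinders, so this shows that $\mathcal G_{G,E}$ is purely infinite in the sense that every compact open set is paradoxical via bisections. Since $\mathcal O_{G,E}$ is simple (hence $\mathcal G_{G,E}$ is minimal and effective), the groupoid results of \cite{bl20,RS} give that $C^*(\mathcal G_{G,E})$ is purely infinite. The map $\mathfrak t_{\mathcal G}$ transports $2x\le x$ to proper infiniteness of the projections $1_U$, and every hereditary subalgebra contains such a $1_U$ by effectiveness.

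The main obstacle will be this last step. Passing from paradoxicality in the type semigroup to pure infiniteness of the $C^*$-algebra needs effectiveness and minimality of a possibly non-Hausdorff groupoid. Our approach is to reduce to the Hausdorff or pseudo-free case handled in \cite{KMP25}, using the fact that simplicity already forces the needed topological freeness.
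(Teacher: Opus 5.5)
Your overall architecture is the right one. It matches the route behind the citation to \cite[Theorem 7.15]{KMP25}; the paper itself only supplies the bridge $\Typ(\mathcal G_{G,E})\cong M(E_G)$, the fact that graph monoids have plain paradoxes (hence Remark~\ref{rem:simple-dichotomy}), and Theorem~\ref{thm:APM}. The easy parts of your argument are fine: (4)$\iff$(5)$\iff$(6) via Theorem~\ref{thm:APM} and the dichotomy for the simple monoid $M(E_G)$, (3)$\Rightarrow$(2)$\Rightarrow$(1), and the Tarski step producing an invariant $\mu$ that is faithful and finite on generators. One small point: derive cofinality of $E_G$ from minimality of $\mathcal G_{G,E}$, i.e.\ simplicity of $M(E_G)$, rather than from \cite[Theorem 16.1]{ep17}, which concerns finite graphs.

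The genuine gap is in the two bridging implications, (6)$\Rightarrow$(3) and (1)$\Rightarrow$(6), which you run entirely on Hausdorff-groupoid machinery. The statement does not assume pseudo-freeness, and $\mathcal G_{G,E}$ can be non-Hausdorff while $\mathcal O_{G,E}$ is simple. The Grigorchuk group \cite{CEPSS} is such a case, and it lies exactly in the purely infinite branch. Your proposed fix does not work: topological freeness does not imply Hausdorffness, so simplicity gives no reduction to the Hausdorff or pseudo-free case.

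Concretely, in (6)$\Rightarrow$(3) there is no conditional expectation of $C^*_r(\mathcal G_{G,E})$ onto $C_0(E^\infty)$ in the non-Hausdorff case. The diagonal only lands in bounded Borel functions, so your faithfulness argument (faithful expectation plus full support of $\mu$) collapses. This part is easily repaired. Set $\tau(a)=\int_{E^\infty}\langle\lambda_x(a)\delta_x,\delta_x\rangle\,d\mu(x)$ on $C^*_r(\mathcal G_{G,E})=\mathcal O_{G,E}$. Invariance of $\mu$ gives the trace property, and $\tau$ is lower semicontinuous, finite and nonzero on $1_{\mathcal Z(v)}$. Faithfulness is then automatic, because $\{a:\tau(a^*a)=0\}$ is a closed ideal of a simple algebra.

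In (1)$\Rightarrow$(6), the results of \cite{bl20,RS} and your claim that every nonzero hereditary subalgebra contains some $1_U$ ``by effectiveness'' are Hausdorff statements. Here an ingredient is genuinely missing. You need that every nonzero positive element of $\mathcal O_{G,E}$ Cuntz-dominates $1_U$ for some nonempty compact open $U\subseteq E^\infty$. In the non-Hausdorff setting this requires passing to the essential $C^*$-algebra, which coincides with $\mathcal O_{G,E}$ under simplicity since the full algebra surjects onto it. It then requires an aperiodicity argument for $C_0(E^\infty)\subseteq C^*_{\mathrm{ess}}(\mathcal G_{G,E})$; this is the kind of input \cite{KMP25} provides and the paper imports by citation. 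As written, your contrapositive of (1)$\Rightarrow$(6) is proved only in the pseudo-free (Hausdorff) case.
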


Hence we see that, although we may have that $\Typ (\mathcal G_{G,E})\ncong \V (\mathcal O _{G,E})$, the type semigroup controls important structural aspects of this large class of $C^*$-algebras. An important point here is that the graph monoid $M(E_G)$ has always plain paradoxes, indeed much better properties hold for this monoid even in the non-simple situation, as shown in \cite[Theorem 6.3, Proposition 6.4]{amp07}. We will review later these connections, in Subsection \ref{subsect:levels-of-cancellation}.   

Theorem \ref{thm:DichotomyEP} expresses an important dichotomy for the $C^*$-algebras $\mathcal O _{G,E}$ in the simple case, namely the dichotomy stably-finite/purely-infinite. Let us point out that, for simple conical commutative monoids, the validity of this dichotomy is indeed equivalent to the monoid having plain paradoxes. See \cite[Section 2]{KMP25} for a detailed account of this sort of dichotomies for preordered commutative monoids.

\begin{remark}
\label{rem:simple-dichotomy}
Suppose that $M$ is a simple conical commutative monoid. Then the following conditions are equivalent:
\begin{enumerate}
\item[(a)] $M$ has plain paradoxes.
\item[(b)] $M$ is either stably finite (i.e. $a+b= a \implies b=0$ for $a,b\in M$) or purely infinite (i.e. $2a\le a$ for every $a\in M$). 
\end{enumerate}
Indeed, suppose that (a) holds and that $M$ is not stably finite. Then there exists $a,b\in M$ such that $a+b=a$ and $b\ne 0$.
Take any nonzero $c$ in $M$. Then, by the simplicity of $M$, there exists $n,m\ge 1$ such that $a\le nc$ and $c\le mb$. Take $x\in M$ such that $nc= x+a$
We have
$$(n+1) c = nc+c \le nc + mb = x+a+mb = x+a= nc.$$
Since $M$ has plain paradoxes, we get $2c\le c$. Hence $M$ is purely infinite. 

Conversely, suppose that (b) holds, and that $(n+1) a \le na$ for some $n\ge 1$ and a nonzero element $a$ in $M$. Then $M$ is not stably finite, hence by (b) it must be purely infinite.
Hence, $2a\le a$, and thus $M$ has plain paradoxes. 
\end{remark}

In favorable circumstances the dichotomy stably-finite/purely-infinite can be improved to a dichotomy cancellative/purely-infinite in the simple setting, 
see Subsection \ref{subsect:levels-of-cancellation}.

The proof of Tarski's Theorem is based on the following properties
of $\Typ (X,G)$ (see \cite[Theorem 3.6 and Theorem 10.19]{TomWagon}):

\smallskip

(a) {\it Schr\" oder-Bernstein}: Let $x,y\in \Typ (X,G)$. If $x\le y$
and $y\le x$ then $x=y$.

\smallskip

(b) {\it $n$-cancellation}: Let $n\ge 1$ and $x,y\in \Typ (X,G)$. If
$nx=ny$ then $x=y$.

\smallskip

Any semigroup satisfying these two properties has plain paradoxes, and hence the 
semigroup obeys Tarski's dichotomy. Indeed if (a) and (b) above hold and $(n+1)e\le ne$
for some $n\ge 1$, then we have $2ne\le ne\le 2ne$ and thus $2ne= ne$ by the Schr\"oder-Bernstein property, and then since $n(2e) = ne$ we get from $n$-cancellation that $2e=e$.   

As observed in \cite[Theorem
5.4]{RorSie}, $S$ also has plain paradoxes if it satisfies the following
condition, which is quite popular amongst $C^*$-algebraists:

\smallskip

(c) {\it almost unperforation}: $(n+1)x\le ny \implies  x\le y$ for
every $n\ge 1$ and $x,y\in S$.

We now specify a possible source of non-plain paradoxes.
Given a cyclic monoid $C$,  it is easy to show that either $C$ is isomorphic to $\N$, or
there are uniquely determined
integers $m,n$ with $1\le m< n$ such that $C= \langle a \mid ma=na \rangle$. 
In the latter case, the different elements of $C$ are $0,a,2a,\dots , (n-1)a$.
Moreover, $ka=la\implies k=l$ whenever $0\le k <m$, and $ka=la\implies k=l+i(n-m)$ for some $i\in \Z$ whenever $k,l\ge m$.  In this case, we say that $C$ has {\it Leavitt type} $(m,n)$. If $C$ is cyclic infinite, we say that $C$ has Leavitt type $(1,1)$.

We say that an element $a$ of a commutative monoid $S$ is {\it $(m,n)$-paradoxical} in case the cyclic subsemigroup of $S$ generated by $a$ is of Leavitt type $(m,n)$. 
We say that the element $a$ is {\it directly finite} if $a+b= a$ in $S$ implies that $b=0$. Note that $S$ is stably finite if and only if all its elements are directly finite. 
If $S$ contains a directly finite element $a$ such that $a$ is $(m,n)$-paradoxical for some $1<m<n$, then $S$ does not have plain paradoxes, because $(m+1) a\le ma$ but $2a\nleq a$, since $a$ is directly finite.  

We now state a result, obtained in \cite{ae14}, concerning the natural homomorphism $\mathfrak{t}$ associated to a separated graph. 

\begin{theorem}
	\label{thm:typesemigroupforOmegaEC} Let $(E,C)$ be a finite
	bipartite separated graph such that $s(E^1)=E^{0,1}$ and
	$r(E^1)=E^{0,0}$, and let $(\Omega (E,C),\theta)$ be the associated partial action
	of the free group $\F$ on $E^1$ on the compact space $\Omega (E,C)$. Let $\mathcal G_{E,C}$ be the associated transformation groupoid.  
	Then the natural map
	$$\mathfrak{t}_{\mathcal G_{E,C}} \colon \Typ (\mathcal G_{E,C}) \longrightarrow \V (A (\mathcal G_{E,C}))$$ is an isomorphism. 
\end{theorem}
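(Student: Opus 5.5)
We will prove that $\mathfrak t_{\mathcal G_{E,C}}$ is an isomorphism by factoring it through a known computation of $\V(\Lab(E,C))$. By Theorem \ref{thm:charac-tameLeavitt}, $A(\mathcal G_{E,C})\cong \Lab(E,C)$, with $1_{\Omega_v}$ corresponding to $v$. So the target is $\V(\Lab(E,C))$. The approach in \cite{ae14} describes this monoid as a direct limit of graph monoids $M(E_n,C^n)$, where
$$(E,C)=(E_0,C^0)\to (E_1,C^1)\to (E_2,C^2)\to\cdots$$
is the canonical sequence of finite bipartite separated graphs. This sequence satisfies $\Lab(E,C)\cong \varinjlim L(E_n,C^n)$. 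At each step the $n$-th graph is built by ``resolving'' the non-commuting final projections: new vertices of $E_{n+1}$ are indexed by choices $(e_X)_{X\in C_v}$ and by the vertices of $E_n^{0,1}$. Combining this with \cite[Theorem 4.3]{ag12} and the continuity of $\V$ under direct limits, we will obtain
$$\V(\Lab(E,C))\cong \varinjlim_n M(E_n,C^n)=:M_\infty .$$

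The plan is then to construct a monoid homomorphism $\Phi\colon M_\infty\to \Typ(\mathcal G_{E,C})$ such that $\mathfrak t_{\mathcal G_{E,C}}\circ\Phi$ is the above isomorphism, and to show that $\Phi$ is surjective. Together these two facts force both maps to be isomorphisms.

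\textbf{Defining $\Phi$.} Each vertex $u$ of $E_n$ corresponds to a compact open cylinder $\Omega_{T_u}\subseteq\Omega(E,C)$, where $T_u$ is a finite left convex set describing the local configuration at distance $\le n$. We send $a_u\mapsto[\Omega_{T_u}]$. To check that the relations of $M(E_n,C^n)$ hold in $\Typ$, note that for $X\in C_v$ the cylinder $\Omega_v$ is the disjoint union of the sets $\Omega_{\{e\}}\cap\Omega_v$, $e\in X$, by condition (c.2). Moreover $\Omega_e$ is mapped by the bisection $\Omega_{(\{1,e\},e,\{1,e^{-1}\})}$ onto $\Omega_{e^{-1}}$, which by (c.1) equals $\Omega_{s(e)}$. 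The refined relations at level $n$ are handled the same way. We also need compatibility with the connecting maps $M(E_n,C^n)\to M(E_{n+1},C^{n+1})$; this reduces to the same decomposition of cylinders one level deeper.

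\textbf{Surjectivity of $\Phi$.} The type semigroup is generated by classes of compact open sets, and every compact open set is a finite disjoint union of basic cylinders $\Omega_T$ with $T$ finite and left convex. Using translation bisections $\theta_\alpha$, each $\Omega_T$ is equivalent to a cylinder based at the identity. Such a cylinder is a finite disjoint union of cylinders whose $T$ is a full ball of some radius $n$, that is, sets of the form $\Omega_{T_u}$ for a vertex $u$ of $E_n$. Hence every $[\Omega_T]$ lies in the image of $\Phi$.

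\textbf{Main obstacle.} The hard part is the identification $\V(\Lab(E,C))\cong\varinjlim M(E_n,C^n)$, together with matching vertices of $E_n$ to cylinders so that the connecting maps agree with the cylinder decompositions. That requires the explicit construction of $(E_{n+1},C^{n+1})$ from $(E_n,C^n)$ and the isomorphism $L(E_n,C^n)\to L(E_{n+1},C^{n+1})$ that linearizes the commutators. I would first verify the one-step case $n=0\to1$ carefully, showing that the $\Lab$-relations make the idempotents $ee^*ff^*$ correspond to new vertices, and then argue by induction. Once this is in place, the commutative triangle and the surjectivity argument above are routine.
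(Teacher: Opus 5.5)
Your proposal is correct and takes essentially the route the paper relies on, namely the proof of \cite[Theorem 7.4]{ae14}. You identify $\V(A(\mathcal G_{E,C}))\cong\varinjlim \V(L(E_n,C^n))\cong\varinjlim M(E_n,C^n)$ via $A(\mathcal G_{E,C})\cong\varinjlim L(E_n,C^n)$ and \cite[Theorem 4.3]{ag12}, then map $\varinjlim M(E_n,C^n)$ to $\Typ(\mathcal G_{E,C})$ by sending vertices to ball cylinders, with injectivity coming from the composite with $\mathfrak t_{\mathcal G_{E,C}}$ and surjectivity from decomposing compact open sets into such cylinders. One slip to fix: the connecting maps $L(E_n,C^n)\to L(E_{n+1},C^{n+1})$ are surjections that kill the commutators $[ee^*,ff^*]$, not isomorphisms; otherwise the limit would just be $L(E,C)$ rather than $\Lab(E,C)$.
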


Here $A(\mathcal G_{E,C} )$ is the Steinberg algebra of $\mathcal G_{E,C}$, as defined above. 
It is not known whether the natural map 
$$\V (A(\mathcal G_{E,C} ) )\longrightarrow \V (C^*(\mathcal G_{E,C} ) )$$
is an isomorphism for a finite bipartite separated graph $(E,C)$. By the results in \cite{ae14}, this would be the case if the natural 
homomorphism $\V (L(E,C)) \longrightarrow \V (C^*(E,C))$ is an isomorphism for every finite bipartite separated graph $(E,C)$.
Indeed, it is shown in \cite[Section 5]{ae14} that 
\begin{equation}
\label{eq:AG-is-a direct-limit}
A(\mathcal G_{E,C})\cong \varinjlim L(E_n,C^n),\qquad C^*(\mathcal G_{E,C}) \cong \varinjlim C^*(E_n,C^n)
\end{equation}
for a certain canonical sequence $\{(E_n,C^n)\}$ of finite bipartite separated graphs. Hence the isomorphism
$\V(A(\mathcal G _{E,C})) \cong \V (C^* (\mathcal G _{E,C}))$ would follow if we knew that the natural maps 
$\V (L(E_n,C^n)) \longrightarrow \V (C^*(E_n,C^n))$ are all isomorphisms. 

See Definition \ref{def:F.infty.and.others} for the construction of the canonical sequence $(E_n,C^n)$ of finite bipartite separated graphs
associated to $(E,C)$. 

Using the first part of \eqref{eq:AG-is-a direct-limit} and \cite[Theorem 4.3]{ag12}, we obtain a monoid isomorphism 
$$\V(A(\mathcal G_{E,C})) \cong \varinjlim \V (L(E_n,C^n))   \cong \varinjlim M(E_n,C^n),$$ 
where $M(E_n,C^n)$ are the graph monoids of the separated graphs $(E_n,C^n)$.

Using these tools, the following result was obtained in \cite{ae14}. Recall that an {\it order-embedding}
of a monoid $M$ into a monoid $N$ is an injective monoid homomorphism $\iota\colon M\to N$ such that $x\le y$ if and only if $\iota (x)\le \iota (y)$ for all $x,y\in M$. 

\begin{theorem} {\rm{\cite[Theorem 7.5]{ae14}}} 
	\label{thm:main-typesemigroup} Let $M$ be a finitely generated
 conical commutative monoid. Then there exists a finite bipartite separated graph $(E,C)$
	and an order-embedding $\iota\colon M\to \Typ (\mathcal G_{E,C})$ of $M$ into the type semigroup $\Typ (\mathcal G_{E,C})$. 
\end{theorem}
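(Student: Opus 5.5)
The plan is to realize $M$ first as the graph monoid of a finite bipartite separated graph, and then to show that the map from this graph monoid into the type semigroup is an order-embedding.

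\emph{Step 1: realize $M$ as a graph monoid.} By \cite{ag12}, every finitely generated conical commutative monoid is isomorphic to $M(E,C)$ for some finite bipartite separated graph $(E,C)$. The construction goes as follows.
\begin{enumerate}
\item Write $M=\langle x_1,\dots,x_k\mid \sum_j a_{ij}x_j=\sum_j b_{ij}x_j,\ i=1,\dots,r\rangle$.
\item Take $E^{0,1}=\{w_1,\dots,w_k\}$.
\item For each relation $i$, take a vertex $v_i\in E^{0,0}$ with $C_{v_i}=\{X_i,Y_i\}$.
\item $X_i$ consists of $a_{ij}$ edges from $w_j$ to $v_i$, and $Y_i$ of $b_{ij}$ edges from $w_j$ to $v_i$.
\end{enumerate}
Conicality lets us assume both sides of each relation are nonzero, so $X_i,Y_i\neq\emptyset$. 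We may need auxiliary relations to ensure $s(E^1)=E^{0,1}$, for instance a trivial one-set vertex $\{e\}$ with $s(e)=w_j$. Then $M(E,C)\cong M$, with $a_{w_j}\mapsto x_j$ and $a_{v_i}$ equal to both sides of relation $i$.

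\emph{Step 2: reduce to a direct limit.} By Theorem~\ref{thm:typesemigroupforOmegaEC} and the description after it,
$$\Typ(\mathcal G_{E,C})\cong \V(A(\mathcal G_{E,C}))\cong\varinjlim M(E_n,C^n),$$
with $(E_0,C^0)=(E,C)$. Define $\iota$ as the composite
$$M\cong M(E,C)\to\varinjlim M(E_n,C^n).$$
It therefore suffices to show that each connecting map $M(E_n,C^n)\to M(E_{n+1},C^{n+1})$ is an order-embedding. Injectivity and order-reflection then pass to the direct limit, since $x\le y$ in the limit is witnessed at some finite stage. In particular, $x\le y$ in the limit means $y=x+z$ with $z$ coming from some later stage $M(E_m,C^m)$, and order-reflection at that stage pulls this back.

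\emph{Step 3: the connecting maps.} Recall that $(E_{n+1},C^{n+1})$ is obtained from $(E_n,C^n)$ by one step of ``resolving'' the range vertices. Each $v$ with $C_v=\{X_1,\dots,X_t\}$ is replaced by new source vertices $v(e_1,\dots,e_t)$, one for each choice $e_i\in X_i$. The new edges realize $a_v=\sum_{e_i\in X_i}a_{v(e_1,\dots,e_t)}$ and $a_{s(e)}=\sum a_{v(\dots e\dots)}$. The connecting map sends $a_w$ to the corresponding sum.

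To prove that this map is an order-embedding, I would construct a retraction-type argument. The plan is to exhibit $M(E_{n+1},C^{n+1})$ as obtained from $M(E_n,C^n)$ by a sequence of elementary refinement moves, each of the form ``adjoin a refinement matrix of a given relation'' (a Riesz-type refinement). I would then verify, using the confluence/normal-form description of $M(E,C)$ from \cite{ag12}, that such moves do not create new equalities or order relations among old elements. Concretely, the refinement step adds free new generators $z_{ij}$ with $x_i=\sum_j z_{ij}$ and $y_j=\sum_i z_{ij}$ whenever $\sum x_i=\sum y_j$.

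The key lemma is that the map $M\to M\langle z_{ij}\rangle$ is an order-embedding. I would prove it by defining, for each $z\in M$ with $x\le y$ in the extension, a way to push the witness back. The natural tool is a monoid homomorphism back to $M$ sending $z_{ij}$ to suitable elements, and this is the main obstacle. Such elements need not exist in $M$ itself, since $M$ is not assumed refinement. So instead I would argue combinatorially via the explicit congruence description (Bergman-type normal forms, as in \cite[Section 4]{ag12}): any chain of elementary moves in the extension connecting old words can be replaced by a chain in $M$. I expect this confluence/diamond-lemma bookkeeping to be the hardest part.
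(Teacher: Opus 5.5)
Your architecture is the one the paper relies on, via \cite{ag12} and \cite{ae14}: realize $M\cong M(E,C)$, identify $\Typ(\mathcal G_{E,C})$ with $\varinjlim M(E_n,C^n)$, and note that order-embeddings survive the direct limit. Steps~1 and~2 are fine. The gap is Step~3, which is the substantive part. You never prove that $M(E_n,C^n)\to M(E_{n+1},C^{n+1})$ is an order-embedding; the paper takes this from \cite{ae14}, where $M(E,C)\to M(F,D)$ is even a unitary embedding. Moreover, the tool you plan to use is not available. A confluence/normal-form property of the kind that works for ordinary graph monoids would make $M(E,C)$ a refinement monoid, since expansions of a sum are sums of expansions. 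But by your own Step~1, $M(E,C)$ can be an arbitrary finitely generated conical monoid, so no such property can hold in general.

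Your idea of pulling chains back into $M$ is nonetheless right, and your key lemma holds for every commutative monoid $M$. The proof is short once you view the refinement step as a pushout:
\begin{enumerate}
\item Let $P=\langle \bar x_i,\bar y_j\mid \sum_i\bar x_i=\sum_j\bar y_j\rangle$. Let $f\colon P\to M$ be the obvious map, and let $g\colon P\to R$ map into the free monoid $R$ on the $z_{ij}$ by $g(\bar x_i)=\sum_j z_{ij}$ and $g(\bar y_j)=\sum_i z_{ij}$.
\item Then $M'=M\sqcup_P R=(M\oplus R)/{\equiv}$, where $\equiv$ is generated by the moves $(u+f(a),\rho)\leftrightarrow(u,\rho+g(a))$, and $\iota(m)=[(m,0)]$.
\item Check that $g$ is injective with unitary image. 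Indeed $g$ sends $(\alpha,\beta)$ to the matrix $(\alpha_i+\beta_j)$. A nonnegative matrix $(\gamma_i+\delta_j)$ with $\gamma\in\Z^p$, $\delta\in\Z^q$ can be rewritten with $\gamma,\delta\ge 0$ by shifting by $\min_i\gamma_i$.
\item Consequently, along any chain of moves starting at $(m',0)$, the $R$-coordinate stays in $g(P)$, and $(u,g(a))\mapsto u+f(a)$ is constant.
\item Thus $\iota(m)+[(u,\rho)]=\iota(m')$ forces $\rho=g(a)$ and $m+u+f(a)=m'$. So $\iota$ is injective, order-reflecting, and in fact unitary.
\end{enumerate}
For $|C_v|=t\ge 3$, the same argument works with the $t$-fold refinement, or you can iterate. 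Doing this vertex by vertex and eliminating the now redundant generators $a_v$ gives exactly $M(E_{n+1},C^{n+1})$, with the connecting map you described. That completes Step~3.
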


\begin{remark}
\label{rem:unitary-embedding} In fact, a condition stronger than being an order-embedding, called a {\it unitary embedding}, is satisfied by the embedding $\iota 
$ of Theorem \ref{thm:main-typesemigroup}. 
\end{remark}

Take positive integers $m,n$ such that $1<m<n$. Then the construction used in the proof of 
Theorem \ref{thm:main-typesemigroup} applied to
the monoid $M=\langle a \mid ma=na\rangle$ gives rise to the  
separated graph $(E,C)=(E(m,n), C(m,n))$. Hence we obtain from Theorem \ref{thm:main-typesemigroup} that the type monoid $\Typ  (\mathcal G_{E,C} )$ supports an $(m,n)$-paradoxical decomposition, indeed the open compact set $\mathcal Z (w)$ is $(m,n)$-paradoxical.  Using globalization techniques from \cite{Abadie}, one can convert this example into an example of a global action of a group on a compact space $X$ admitting non-plain paradoxes, showing that Tarski's Dichotomy Theorem does not extend to the topological setting. See \cite[Section 7]{ae14} for details. This answered questions formulated in  \cite{KN} and \cite{RorSie}. 

We may wonder what are the properties of the groupoid $\mathcal G_{E,C}$ of the separated graph $(E,C) = (E(m,n),C(m,n))$ and of its associated algebras. Using known results in the literature, we 
state below some of its properties.
 
  Recall that a groupoid $\mathcal G$ is {\it effective} if the interior of the isotropy subgroupoid $\text{Iso} (\mathcal G) = \bigsqcup_{x\in \mathcal G^{(0)}} \mathcal G_x^x$ 
 equals the group of units $\mathcal G^{(0)}$ of $\mathcal G$. 

Recall from \cite{ae14} that there exists a {\it reduced} tame $C^*$-algebra $\mathcal O ^r (E,C)$ associated to any
finite bipartite separated graph $(E,C)$. It is defined as the reduced crossed product
$$\mathcal O ^r (E,C) := C(\Omega (E,C)) \rtimes ^r \mathbb F .$$
There is a canonical surjective $*$-homomorphism $\mathcal O (E,C) \to \mathcal O^r (E,C)$.

\begin{theorem}
\label{thm:properties-ofGmn} Let $m,n$ be integers such that $1<m<n$. Then the groupoid $\mathcal G (m,n) := \mathcal G_{E(m,n),C(m,n)}$ satisfies the following properties:
\begin{enumerate}
\item $\mathcal G (m,n)$ supports an $(m,n)$-paradoxical decomposition, that is, its type monoid $\Typ (\mathcal G (m,n))$ contains an element $u$ which is $(m,n)$-paradoxical. In particular
$\Typ (\mathcal G (m,n))$ does not have plain paradoxes.
\item The Murray-von Neumann monoid $V(A(\mathcal G (m,n)))$ of the Steinberg algebra $A(\mathcal G (m,n)) = \Lab (E(m,n),C(m,n))$ does not have plain paradoxes.
\item $\mathcal G (m,n)$ is an effective Hausdorff groupoid.
 \item $\mathcal G (m,n)$ is not a minimal groupoid.
\item The Steinberg algebra $A (\mathcal G (m,n)) = \Lab (E(m,n),C(m,n))$ is a prime algebra. Likewise the reduced $C^*$-algebra  $C^*_r (\mathcal G (m,n))= \linebreak  \mathcal O^r(E(m,n),C(m,n))$ is a prime $C^*$-algebra. 
 \item The natural map $C^* (\mathcal G (m,n)) \to C^*_r(\mathcal G (m,n))$ is not injective. In particular, $\mathcal G (m,n)$ is not an amenable groupoid.
 \end{enumerate}
\end{theorem}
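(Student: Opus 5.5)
We take the six items in turn, leaning on the results already stated for finite bipartite separated graphs.

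\emph{Items (1) and (2).} Apply Theorem \ref{thm:main-typesemigroup} to the monoid $M=\langle a\mid ma=na\rangle$. As noted after Remark \ref{rem:unitary-embedding}, the construction yields exactly $(E(m,n),C(m,n))$, and the order-embedding sends $a$ to $u=[\Omega_w]$. An order-embedding is injective, so the cyclic submonoid generated by $u$ is isomorphic to that generated by $a$ and has Leavitt type $(m,n)$. To see that plain paradoxes fail we need $u$ to be directly finite. If $u+b=u$, then $u\ge u+b$ in $\Typ$, so by the order-embedding property the relation is reflected into $M$. We want to use the unitary-embedding property of Remark \ref{rem:unitary-embedding}, which we expect forces $b$ to lie in the image of $M$. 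In $M$ the equation $a+b'=a$ forces $b'=0$, because $M$ is a Leavitt-type monoid with $m>1$ and so $a$ is not of the form $a+c$ with $c\neq 0$. Granting this, $(m+1)u\le mu$ while $2u\nleq u$, so plain paradoxes fail. Item (2) then follows at once from Theorem \ref{thm:typesemigroupforOmegaEC} together with Theorem \ref{thm:charac-tameLeavitt}.

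\emph{Items (3) and (4).} Hausdorffness holds because $\mathcal G_{E,C}$ is a partial transformation groupoid whose domains $\Omega_\alpha$ are clopen. For effectiveness we use (TT2), which holds since $m,n\ge 2$. Given a basic bisection $\Omega_{(T',g,T)}$ with $g\neq 1$, we will exhibit a configuration $\xi\in\Omega_T$ with $g\cdot\xi\neq\xi$. The reason such a $\xi$ exists is that at every range-vertex local configuration we may choose any edge $e_X\in X$, and $|X|\ge 2$ lets us alter $\xi$ far out along $g$ while keeping $T\subseteq\xi$. For non-minimality we look for a closed invariant proper subset. The natural candidate is the set of configurations whose choices $e_X$ are ``constant'' (say $e_X=\alpha_1$ or $\beta_1$ everywhere), or the set of configurations periodic under some translation; we check that this set is invariant under all $\theta_\alpha$ and is closed.

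\emph{Items (5) and (6).} For primeness, it suffices to show that the groupoid is topologically transitive and effective; in that case the reduced $C^*$-algebra and the Steinberg algebra are prime, by standard results for Hausdorff effective groupoids. Topological transitivity means that any two nonempty cylinder sets $\Omega_T$ and $\Omega_{T'}$ are linked by some $g$ with $\theta_g(\Omega_T\cap\Omega_{g^{-1}})\cap\Omega_{T'}\neq\emptyset$. To prove it, we glue a configuration containing $T$ to a translate of one containing $T'$ along a long path, using the freedom of left-convex configurations. For (6), we compare $\V(C^*(\mathcal G))$ and $\V(C^*_r(\mathcal G))$. The reduced algebra of an effective groupoid with the type semigroup in (1) should admit a faithful conditional expectation, which prevents certain relations. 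On the other side, $\mathcal O(E,C)$ maps onto the algebra generated by $U^{\mathrm{nc}}_{m,n}$-type relations, where the projection $1_{\Omega_w}$ carries a tracial or $K$-theoretic obstruction that is absent in the reduced algebra. More concretely, we plan to use the known fact, due to McClanahan, that $U^{\mathrm{nc}}_{m,n}$ is not exact or has nonzero $K_0$-class behaviour, and push this through the corner $w\mathcal O(E,C)w$. Nonamenability then follows, since for amenable groupoids the full and reduced algebras coincide.

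The main obstacle is item (6), together with the direct-finiteness step in (1). We would try first to reduce (6) to the nonamenability of the partial action of $\F$ via a failure of approximation properties on $\Omega_w$.
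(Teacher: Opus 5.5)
The genuine gaps are in items (4) and (6).

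\textbf{Item (4).} Neither of your candidate sets works. The ``constant choice'' set is empty. At every $(c.1)$-vertex a configuration contains all of $\alpha_1^{-1},\dots,\alpha_n^{-1},\beta_1^{-1},\dots,\beta_m^{-1}$, so every edge occurs as the choice $e_X$ at some $(c.2)$-vertex. For periodic configurations, the fixed set of a single $g$ is closed but not invariant: $\theta_h$ carries it to the fixed set of $h\cdot g\cdot h^{-1}$. The union over all $g\neq 1$ is invariant but not closed. In fact it is dense, since any finite patch can be extended $g$-periodically along a long geodesic segment through it.

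More importantly, your plan never addresses why the set would be \emph{proper}, and that is the real content. The paper gets properness from (3). By universality of $\Omega^u=\Omega(E(m,n),C(m,n))$ among $(m,n)$-dynamical systems, the following explicit system maps equivariantly and continuously into $\Omega^u$: take $p=n-m+1$, $Y=\{1,\dots,p\}^{\mathbb N}$, $X=\{1,\dots,m\}\times Y$, $v_j(y)=(j,y)$, $h_i=v_i$ for $i<m$, and $h_{m-1+k}(y)=(m,ky)$. The image is a nonempty closed invariant set. On its relatively clopen part over $w$, the element $\beta_1^{-1}\alpha_1$ acts trivially. So the image cannot be all of $\Omega^u$, because the action on $\Omega^u$ is topologically free.

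\textbf{Item (6).} The obstructions you invoke are not there. Neither algebra has a tracial state, since $\tau(p_v)=n\tau(p_w)=m\tau(p_w)$ forces $\tau(1)=\tau(p_v+p_w)=0$. Also $m[1_{\Omega_w}]=n[1_{\Omega_w}]$ holds in both algebras, so nothing tracial or $K$-theoretic about $1_{\Omega_w}$ separates them. The map also runs the other way from what you wrote: $U^{\mathrm{nc}}_{m,n}\cong wC^*(E,C)w$ surjects onto $w\mathcal O(E,C)w$, and non-exactness does not pass to quotients. Finally, non-amenability is the conclusion of (6), not a tool for it; by itself it does not force $C^*(\mathcal G)\neq C^*_r(\mathcal G)$.

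The missing idea is the exactness asymmetry behind \cite[Theorem 7.2]{aek13}, which the paper cites (see also \cite[Theorem 5.1]{lolk-nuc}). The full algebra $\mathcal O(E(m,n),C(m,n))$ is not exact for $m,n\ge 2$. By contrast, $C^*_r(\mathcal G(m,n))=C(\Omega)\rtimes^r\F$ is exact because $\F$ is an exact group. Hence the canonical surjection cannot be injective.

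\textbf{Smaller points.} In (1) the direct-finiteness detour is unnecessary. Its step that $u\ge u+b$ ``is reflected into $M$'' is also unjustified, since $u+b$ need not lie in $\iota(M)$. Instead, note that $2u=\iota(2a)$ and $u=\iota(a)$ both lie in the image. The order-embedding then gives $2u\le u\iff 2a\le a$, which fails in $\langle a\mid ma=na\rangle$ because $1<m$. Meanwhile $(m+1)u\le mu$ is just the image of $(m+1)a\le ma$.

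Your direct arguments for (3) and (5) are valid alternatives to the paper's citations of \cite[Theorem 10.5]{ae14} and \cite[Example 9.12(1)]{al18}. For (3), a $g$-fixed configuration contains every $g^k$. So switching, via (TT2), to another edge of $X$ at a far $(c.2)$-vertex on the forward axis of $g$ breaks invariance while keeping $T$. For (5), effectiveness plus topological transitivity gives primeness through the intersection property, and transitivity follows by gluing patches.
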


\begin{proof}
(1) This follows from \cite[Theorem 7.5]{ae14}.

(2) This follows from (1) and Theorem \ref{thm:typesemigroupforOmegaEC}. 

(3) Recall that, for a finite bipartite separated graph $(E,C)$, $\mathcal G_{E,C}$ is the transformation groupoid of a partial action $\theta_{(E,C)} \colon \mathbb F \act \Omega (E,C)$.
It is well-known (and easy to show) that the transformation groupoid of a partial action $\theta$ of a discrete group on a locally compact Hausdorff space is always a Hausdorff groupoid, and that it is effective if and only if the partial action $\theta$  is topologically free. 
Now by \cite[Theorem 10.5]{ae14}, the partial action $\theta_{(E,C)}$ is topologically free if and only if $(E,C)$ satisfies the so-called condition (L) introduced in \cite[Definition 10.2]{ae14}.
Now it is straightforward to show that a finite bipartite separated graph satisfying condition (TT2) (i.e. such that $|X| \ge 2$ for all $X\in C$) satisfies condition (L). 
It follows in particular that $(E(m,n),C(m,n))$ satisfies condition (L) and consequently $\mathcal G (m,n)$ is an effective Hausdorff groupoid. 

(4) We provide here an explicit example of a non-trivial closed invariant subset of $\Omega^u := \Omega  (E(m,n),C(m,n))$, following the proof of \cite[Proposition 3.9]{aek13}. We first recall from \cite{aek13} that an $(m,n)$-dynamical system consists of a pair $(X,Y)$ of compact Hausdorff spaces such that there are clopen partitions 
$$X= \bigsqcup_{i=1}^n H_i = \bigsqcup_{j= 1}^m V_j $$
together with homeomorphisms $h_i\colon Y\to H_i$, for $i=1,\dots , n$ and $v_j\colon Y\to V_j$, for $j=1,\dots ,m$. Given an $(m,n)$-dynamical system $(X,Y, \{h_i\},\{v_j\})$, there is a canonical partial action $\theta$ of the free group $\mathbb F_{n+m}$ on the topological disjoint union 
$\Omega = X \overset{\cdot}{\cup} Y$, see \cite[Proposition 3.3]{aek13}. 

For instance, take $\Omega ^u = \Omega (E(m,n), C(m,n))$, and set 
$$ X^u :=  \Omega ^u_v  = \{\xi \in \Omega^u : \xi_1 \text{ is of the form (c.2) }\}, $$
$$Y^u :=  \Omega ^u_w  = \{\xi \in \Omega^u : \xi_1 \text{ is of the form (c.1) }\} ,$$
and $h_i^u\colon Y^u\to X^u$, $v_j^u\colon Y^u\to X^u$, defined by $h_i^u (\xi) = \alpha_i \cdot \xi$,  $v_j^u(\xi) = \beta_j \cdot \xi$, for $i=1,\dots, n$ and $j=1,\dots , m$. 
Then $(X^u,Y^u, \{h^u_i\}, \{v^u_j\})$ is an $(m,n)$-dynamical system, and it is indeed the universal $(m,n)$-dynamical system, in the sense that for each other
$(m,n)$-dynamical system $(X,Y, \{h_i\},\{v_j\})$, there exists a unique equivariant continuous map $\gamma \colon \Omega :=X\overset{\cdot}{\cup} Y \to \Omega ^u$, see \cite[Theorem 3.8]{aek13}. 

We are now ready to define our explicit example of a non-trivial closed invariant subset of $\Omega^u$. Set $p= n-m+1$, and take $Y= \{1,\dots ,p \}^{\mathbb N}$, and $X= \{1,\dots ,m \} \times Y$. 
Define the maps $v_j \colon Y\to X$ and $h_i\colon Y\to X$ as follows. First we set $v_j(y) = (j,y)$ for $j=1,\dots , m$. We now set $h_i= v_i$ for $i=1,\dots , m-1$, and 
$$h_{m-1+k} (y) = (m,ky) := (m, (k,y_1,y_2,y_3,\dots )), \qquad 1\le k\le p .$$
Then $(X,Y,\{h_i\},\{v_j\})$ is an $(m,n)$-dynamical system, and hence there is a unique equivariant continuous map $\gamma \colon \Omega := X\overset{\cdot}{\cup} Y \to \Omega^u$.
Note that the partial action of $\mathbb F_{n+m}$ on $\Omega$ is clearly minimal, but not topologically free. Indeed $Y$ is a clopen subset of $\Omega$ on which $v_1^{-1}h_1$ is the identity. 
Hence the same properties hold for its continuous image $\gamma (\Omega) \subseteq \Omega^u$. Since the partial action of $\mathbb F_{n+m}$ on $\Omega^u$ is topologically free (by (3)), it follows that $\gamma (\Omega)$ is a non-trivial closed invariant subset of $\Omega^u$, showing that the transformation groupoid $\mathcal G (m,n)$ is not minimal.

(5) This is \cite[Example 9.12(1)]{al18}.

(6) This follows from \cite[Theorem 7.2]{aek13}, see also \cite[Theorem 5.1]{lolk-nuc}. 
\end{proof}

By Theorem \ref{thm:properties-ofGmn}(4), the groupoid $\mathcal G (m,n)$ is not a minimal groupoid, equivalently the partial action of $\mathbb F_{n+m}$ on $\Omega (E(m,n),C(m,n))$ is not minimal. We propose the following open problem.

\begin{openproblem}
\label{opp:simple-groupoid}
Given integers $m,n$ with $1<m<n$,
does there exist a minimal $(m,n)$-dynamical system such that the corresponding type semigroup contains an $(m,n)$-paradoxical element? 
\end{openproblem}

It is worth recording here that using globalization techniques, one can pass from a partial action to a global action in the above example:

\begin{theorem}{\rm{\cite[Corollary 7.12]{ae14}}} There exists a global action of a finitely generated free group $\mathbb F$ on a totally disconnected 
metrizable compact space $Z$ such that the type monoid $\Typ (Z,\mathbb F, \mathbb K)$ does not have plain paradoxes. 
\end{theorem}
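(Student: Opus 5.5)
The plan is to globalize the partial action of $\mathbb F=\mathbb F_{n+m}$ on $\Omega^u=\Omega(E(m,n),C(m,n))$ for some $1<m<n$, using Abadie's enveloping action \cite{Abadie}. Then I would check that the type monoid of the open compact sets does not change under this passage, at least after cutting down to a suitable clopen subset.

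\emph{Step 1: the enveloping space.} Let $\theta$ be the partial action of Theorem~\ref{thm:charac-tame}. Its globalization $Z^e$ is the quotient of $\mathbb F\times\Omega^u$ by the relation $(g,x)\sim(h,y)$ iff $x\in D_{g^{-1}h}$ and $\theta_{h^{-1}g}(x)=y$. The group $\mathbb F$ acts globally by left translation in the first coordinate, and $\Omega^u$ embeds as an open subset. Since all $D_g=\Omega_g$ are clopen, $Z^e$ is Hausdorff, locally compact, totally disconnected and second countable. It is not compact, however, so I would not take $Z=Z^e$. Instead I take $Z$ to be a compact invariant piece, or a one-point (or Alexandroff-type) compactification with the fixed point added. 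The simplest route is the one-point compactification $Z=Z^e\cup\{\infty\}$ with $\mathbb F$ fixing $\infty$. It is compact, metrizable and totally disconnected, because $Z^e$ is a countable union of compact open sets.

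\emph{Step 2: comparing type monoids.} Every compact open subset of $Z^e$ is a finite disjoint union of sets $g\cdot U$ with $U\subseteq\Omega^u$ compact open. So the inclusion induces a monoid map $\Typ(\mathcal G_{E,C})\to\Typ(Z^e,\mathbb F,\mathbb K)$, and it is surjective. Injectivity, and more importantly order-reflection on the relevant element, is the heart of the argument. The idea is to lift an equidecomposition $\bigsqcup g_iA_i$ in $Z^e$ back to $\Omega^u$. To do this, I refine the pieces so that each piece together with its image lies inside a single translate $g\Omega^u$. Then, using $\theta_g\circ\theta_h\subseteq\theta_{gh}$, I read each such move as a move of the partial action. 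This yields an isomorphism $\Typ(\mathcal G_{E,C})\cong\Typ(Z^e,\mathbb F,\mathbb K)$, which is the standard Morita invariance of type monoids: the transformation groupoids of $\theta$ and of its globalization are Kakutani equivalent, since $\Omega^u$ is a full clopen subset.

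For $Z$, the compact open subsets not containing $\infty$ are exactly those of $Z^e$, and those containing $\infty$ are complements of compact sets. I would then show that $\Typ(Z^e,\mathbb F,\mathbb K)$ embeds as an order-ideal in $\Typ(Z,\mathbb F,\mathbb K)$. The point is that a relation $x+z=y$ with $x,y$ supported away from $\infty$ forces $z$ to be supported away from $\infty$ as well, because the point $\infty$ is fixed and the decompositions are disjoint.

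\emph{Step 3: the conclusion.} By Theorem~\ref{thm:properties-ofGmn}(1), $u=[\mathcal Z(w)]$ is $(m,n)$-paradoxical. By the construction in Theorem~\ref{thm:main-typesemigroup}, it is also directly finite, since $\mathcal Z(w)$ corresponds to $a$ in $\langle a\mid ma=na\rangle$, which embeds unitarily. Both properties transfer along an order-ideal embedding. Hence $(m+1)u\le mu$ but $2u\nleq u$, so $\Typ(Z,\mathbb F,\mathbb K)$ does not have plain paradoxes.

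I expect Step 2 to be the main obstacle. Specifically, the injectivity of the map from the partial type monoid into the global one requires careful refinement of the pieces so that each lies in a single translate of $\Omega^u$, and the behaviour at the added point $\infty$ also needs handling. If the one-point compactification causes trouble, my fallback is to replace $Z$ by $\overline{\mathbb F_0\cdot\Omega^u}$ for a suitable choice, or simply to accept a locally compact space and restrict to the compact clopen set $\Omega^u$.
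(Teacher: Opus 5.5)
Your proposal is correct and follows the paper's route. You globalize the partial action of $\mathbb F_{n+m}$ on $\Omega(E(m,n),C(m,n))$ so that the type monoids agree; the paper cites \cite[Corollary 7.9]{ae14} for this, while you justify it via Abadie's enveloping action and Kakutani equivalence with the full clopen set $\Omega^u$. You then pass to the one-point compactification with $\infty$ fixed, as the paper does. Your order-ideal embedding $\Typ(Z^e,\mathbb F,\mathbb K)\hookrightarrow\Typ(Z,\mathbb F,\mathbb K)$, which uses the invariance of $Z^e$, supplies a step the paper leaves implicit: the witness $u=[\mathcal Z(w)]$, with $(m+1)u\le mu$ and $2u\nleq u$, survives the compactification.
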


\begin{proof}
Take $1<m<n$. By \cite[Corollary 7.9]{ae14}, the partial action $\theta$ of $\mathbb F_{n+m}$ on $\Omega:= \Omega (E(m,n),C(m,n))$ can be globalized to an action $\beta$ of $\mathbb F_{n+m}$
on a locally compact, totally disconnected, metrizable space $Y$, in such a way that $\Typ (\Omega, \mathbb F_{n+m},\mathbb K) \cong \Typ (Y, \mathbb F_{n+m},\mathbb K)$.
Taking $Z$ as the one-point compactification of $Y$, with trivial action on the point at infinity, one gets the desired compact space $Z$.
\end{proof}

Recall that a commutative monoid $M$ is said to be a {\it refinement monoid} if whenever $a+b=c+d$ in $M$, there exist $x,y,z,t$ in $M$ such that $a=x+y$ and $b=z+t$ while $c= x+z$ and $d= y+t$. 
Also, $M$ is said to be {\it conical} in case $x+y=0$ in $M$ implies that $x=y=0$. It is shown in \cite{weh} that the type semigroup of any ample groupoid is a conical refinement monoid (see Section \ref{sect:specialprops}).

In \cite{weh}, Wehrung showed that all countable conical refinement monoids appear as type semigroups, as follows:

\begin{theorem}{\rm{\cite{weh}}}
\label{thm:wehrung-realization}
Let $M$ be a countable conical refinement monoid. Then there exist a locally compact Hausdorff metrizable  zero-dimensional space $X$ and an action of a discrete group $G$ by homeomorphisms on $X$ such that $M\cong \Typ (X,G,\mathbb K)$.
\end{theorem}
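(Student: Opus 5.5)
Given a countable conical refinement monoid $M$, the plan is to build a Boolean (ample) dynamical model whose type monoid is $M$. The first step is to invoke the structure theory of countable refinement monoids. Every countable conical refinement monoid is a direct limit
$$M\cong \varinjlim (M_k,\phi_k)$$
of finitely generated free commutative monoids $M_k=\Z^{+(n_k)}$. This is the Grillet/Dobbertin/Wehrung representation. In the countable case the transition maps are monoid homomorphisms, and refinement allows us to choose the system so that every relation holding in $M$ is already witnessed at a finite stage.

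Each free monoid $\Z^{+(n)}$ is the type monoid of a trivial model: $n$ copies of a compact open set with no identifications. Concretely, take $n$ disjoint Cantor sets $C_1,\dots ,C_n$ and the trivial group. A homomorphism $\phi\colon \Z^{+(n)}\to\Z^{+(n')}$ sends each generator $e_i$ to $\sum_j a_{ij}e'_j$. We realize this by cutting a piece corresponding to $e_i$ into $\sum_j a_{ij}$ clopen subsets and declaring each subset equivalent, via a partial homeomorphism, to the next-level piece $C'_j$.

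Iterating this along the sequence, we take as $X$ the locally compact zero-dimensional metrizable space obtained as a disjoint union of countably many Cantor pieces, arranged as a Bratteli-type tree of clopen subsets. This gives a countable inverse semigroup of partial homeomorphisms generated by the chosen identifications. Equivalently, it gives an ample groupoid $\mathcal G$ built from a "Bratteli diagram with multiplicities", and by construction there is a surjection $M\to\Typ(\mathcal G)$.

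The main obstacle is injectivity. We must show that no extra relations are introduced beyond those in the direct limit, i.e. that every equidecomposition of compact opens is already witnessed at some finite level. For this I would use the conical refinement property of $\Typ(\mathcal G)$, which is stated as known from \cite{weh}. Any decomposition of compact opens lifts to a common refinement at a finite stage, and that refinement is controlled by a refinement matrix in $M_k$. The goal is an inductive argument showing the induced map on the limit is an isomorphism.

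To pass from a groupoid or partial action to a genuine action of a discrete group $G$ by homeomorphisms, there are two options. One is to take $G$ to be the group generated by suitable involutive homeomorphisms: each partial identification $U\to V$ with $U\cap V=\emptyset$ extends to a global involution of $X$ swapping $U$ and $V$ and fixing everything else. The other is to use globalization as in \cite{Abadie}, keeping local compactness. Either way we must check that $\Typ(X,G,\mathbb K)$ equals $\Typ(\mathcal G)$. This holds because the involutions generate the same equidecomposition relation on compact open sets.

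I expect the delicate part to be choosing the Bratteli-type data so that both the refinement and the "no extra relations" conditions hold. To handle this, I would try to enumerate all pairs of equal elements in $M$ and interleave them into the sequence, so that every relation is realized while each finite stage stays free.
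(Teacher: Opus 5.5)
Your first step is false, and the rest of the plan depends on it. A direct limit $\varinjlim(\Z^{+(n_k)},\phi_k)$ of finitely generated free commutative monoids is always cancellative. Indeed, if $x+z=y+z$ in the limit, the equation already holds at some finite stage after pushing forward, and $\Z^{+(n_k)}$ is cancellative. By Grillet's theorem (equivalently Effros--Handelman--Shen), such limits are exactly the positive cones of countable dimension groups, as recalled in Section~\ref{sect:specialprops}.

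A countable conical refinement monoid need not be cancellative. Examples are $\{0,\infty\}$, the graph monoid $M(R_n)=\langle a\mid a=na\rangle$ for $n\ge 2$ (the type monoid of the Cuntz groupoid), and the wild monoid of Theorem~\ref{thm:type-semigroup-lamplighter}. The last one is not even a direct limit of finitely generated \emph{refinement} monoids. So no choice of Bratteli data with multiplicities, and no interleaving of relations, can work while ``each finite stage stays free''. Cancellation passes to the limit, so a relation such as $a=2a$ is never produced.

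There is also a smaller error: a Cantor set with the trivial group has type monoid $C(\text{Cantor},\Z^+)$, not $\Z^+$. You want instead the path space of an honest Bratteli diagram with tail equivalence. Even after that fix, your construction proves the statement only for dimension-group cones, where it is the familiar AF-groupoid realization. Your injectivity step is also unsupported: refinement of $\Typ(\mathcal G)$ says nothing about which equidecompositions actually occur.

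The missing idea is the one the paper's guide singles out: Dobbertin's theorem that every countable conical refinement monoid $M$ is V-measurable. This means there is a Boolean ring $B$ and a surjective finitely additive map $\mu\colon B\to M$ with a splitting property: $\mu(c)=\alpha+\beta$ forces $c=a\sqcup b$ with $\mu(a)=\alpha$ and $\mu(b)=\beta$. Dobbertin builds $B$ and $\mu$ inductively over $M$ itself, using refinement in $M$ rather than free approximations, and this is what accommodates non-cancellative relations.

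Wehrung then shows the V-measure can be chosen group-induced. This gives $M\cong\Z^+\langle B\rangle/\!/G\cong\Typ(\mathrm{Inv}(B,G))$ for an action of a group $G$ on $B$. Stone duality turns this into an action by homeomorphisms on a locally compact zero-dimensional metrizable space. The passage to a global group action, which you worry about at the end, is secondary to this.
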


This is shown in \cite[Theorem 4.8.9]{weh}, but since both the statement and its proof are written in a different language, to which people working on operator algebras may not be used, I will give here a brief guide to the main lines of the proof.

The first thing to notice is that Wehrung works in \cite{weh} with the equivalent notion of Boolean rings. By Stone duality, this is equivalent to working with locally compact, Hausdorff, zero-dimensional topological spaces, see \cite[Section 1.4]{weh}.
Hence one has to show that any countable conical refinement monoid is isomorphic to the type monoid, denoted $\text{Typ} (\text{Inv} (B,G))$, associated to the action of a discrete group $G$ on a Boolean ring $B$. The type monoid of a Boolean inverse semigroup is defined in \cite[Definition 4.1.3]{weh} (see Section \ref{sect:specialprops} below). The inverse monoid $\text{Inv}(B,G)$ associated to an action of a group $G$ on a Boolean ring $B$ is defined in \cite[Example 4.4.15]{weh}. Now \cite[Theorem 4.8.9]{weh} asserts that every countable conical refinement monoid is group-measurable. By definition (see \cite[Definition 4.8.4]{weh}), this means that there are a Boolean ring $B$ and a surjective group-induced V-measure $\mu \colon B \to M$. By \cite[Theorem 4.7.14]{weh}, this implies that there exists a monoid isomorphism $\tau \colon \Z^+<B>//G \to M$ for a suitable action of a group $G$ on $B$, where $\Z^+<B>//G$ is a certain monoid, which by \cite[Proposition 4.4.20]{weh} is isomorphic to the type semigroup $\text{Typ} (\text{Inv} (B,G))$ associated to the action of $G$ on $B$. This concludes our little guide to the proof of the theorem. We stress that all the argument depends on a crucial result of Dobbertin \cite[Theorem 3.4]{dobbertin}, which implies that all countable conical refinement monoids are $V$-measurable.  

Using an embedding result of Wehrung, one can deduce the existence of a minimal action whose type semigroup does not have plain paradoxes, as follows:

\begin{corollary}
There exists a minimal action of a countable discrete group $G$ on a locally compact Hausdorff zero-dimensional metrizable space $X$ such that the type semigroup $\Typ (G,X,\mathbb K)$ does not have plain paradoxes.
\end{corollary}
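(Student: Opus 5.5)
The plan is to combine Wehrung's realization theorem (Theorem \ref{thm:wehrung-realization}) with an embedding result: first produce a countable conical refinement monoid $M$ that is \emph{simple} and fails to have plain paradoxes, then realize it as a type semigroup, and finally check that simplicity of the type semigroup forces minimality of the action.

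\textbf{Step 1 (constructing $M$).} Take $1<m<n$ and $N=\langle a\mid ma=na\rangle$. This is a finitely generated conical monoid, and $a$ is $(m,n)$-paradoxical in $N$. Moreover $a$ is directly finite: if $a+b=a$ with $b=ka$ and $k\ge 1$, then $(1+k)a=a$, which is impossible since $1<m$. Hence $N$ lacks plain paradoxes. The embedding result of Wehrung I would invoke says that every countable conical commutative monoid (in particular this one, or one containing it) order-embeds, in fact unitarily, into a countable \emph{simple} conical refinement monoid $M$. I expect this to be available from \cite{weh}, where embeddings into simple refinement monoids are discussed. Let $\iota\colon N\to M$ be the embedding, and replace $M$ by the order-ideal generated by $\iota(a)$ if needed; this is again simple, conical, refinement and countable.

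\textbf{Step 2 (failure of plain paradoxes transfers).} In $M$ we have $(m+1)\iota(a)\le m\iota(a)$, since this already holds in $N$. Suppose $2\iota(a)\le\iota(a)$ held in $M$. Because $\iota$ is an order-embedding, $2a\le a$ in $N$, i.e.\ $a=2a+b$, so $a+(a+b)=a$ with $a+b\ne 0$ (conicality), contradicting direct finiteness of $a$. So $M$ does not have plain paradoxes. This step is where the \emph{order}-embedding, as opposed to a mere embedding, is essential.

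\textbf{Step 3 (realization and minimality).} By Theorem \ref{thm:wehrung-realization} there is an action of a countable discrete group $G$ on a locally compact Hausdorff zero-dimensional metrizable space $X$ with $\Typ(X,G,\mathbb K)\cong M$. Countability of $G$ should come out of Wehrung's construction; if not, we pass to a countable subgroup that realizes all the needed equidecompositions, which is possible because $M$ and the countable basis of compact open sets are countable. To get minimality, suppose $Z\subsetneq X$ is a nonempty closed invariant set. Then pick a nonzero compact open $U\subseteq X\setminus Z$ and a nonzero compact open $V$ meeting $Z$. Simplicity gives $[V]\le k[U]$, so $V$ is covered by finitely many translates of pieces of $U$; these translates avoid $Z$ by invariance of $X\setminus Z$, which contradicts $V\cap Z\neq\emptyset$.

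The main obstacle is Step 1: locating the precise form of Wehrung's embedding theorem, namely that it yields a \emph{simple}, \emph{countable} refinement monoid and an \emph{order}-embedding (or unitary embedding). I would first try to cite the relevant result from \cite{weh} directly. Failing that, I would first embed $N$ unitarily into a countable conical refinement monoid, as in Theorem \ref{thm:main-typesemigroup} via $\Typ(\mathcal G(m,n))$. I would then attempt to reach a simple quotient or a simple order-ideal while keeping $a$ directly finite and $(m,n)$-paradoxical, which is delicate in view of Theorem \ref{thm:properties-ofGmn}(4).
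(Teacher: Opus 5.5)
Your three steps follow the paper's proof exactly. You order-embed $N=\langle a\mid ma=na\rangle$ into a countable simple conical refinement monoid, and use the order-embedding to get $(m+1)\iota(a)\le m\iota(a)$ but $2\iota(a)\nleq\iota(a)$. You then realize the monoid via Theorem \ref{thm:wehrung-realization} and deduce minimality from simplicity; your equidecomposition argument just spells out the paper's one-line remark.

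The problem is the step you flagged, which as you state it is false, not merely unlocated. Suppose $\iota\colon P\to Q$ is an order-embedding and $Q$ is simple. Then for nonzero $x,y\in P$ one has $\iota(x)\le k\iota(y)=\iota(ky)$, hence $x\le ky$, so $P$ must itself be simple. For instance, $\mathbb{Z}^+\times\mathbb{Z}^+$ admits no such embedding. The correct input, which the paper uses, is Wehrung's result that a \emph{simple} conical commutative monoid order-embeds into a simple conical refinement monoid \cite[Corollary 2.7]{weh1998}. To apply it you must check that $N$ is simple. This is immediate: every nonzero element is $ka$ with $k\ge 1$, and $la\le j(ka)$ as soon as $jk\ge l$.

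Second, that result does not give countability. Your proposed fix, passing to the order-ideal generated by $\iota(a)$, is vacuous, since in a simple monoid that order-ideal is the whole monoid. The paper instead uses a standard L\"owenheim--Skolem type closure. Starting from $\iota(N)$, you repeatedly adjoin countably many witnesses in the ambient simple refinement monoid: refinement matrices for equations $x+y=z+t$, and elements $z$ with $x+z=ky$ for nonzero $x,y$. The union is a countable simple conical refinement submonoid $M$ containing $\iota(N)$. Moreover, $\iota\colon N\to M$ is still an order-embedding, because $\le_M$ implies the order of the ambient monoid. With these two repairs your Steps 2 and 3 go through as written, and the fallback through $\Typ(\mathcal G(m,n))$ is unnecessary.
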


\begin{proof} Take integers $m,n$ such that $1<m<n$, and consider the conical commutative monoid $S= \langle a \mid na=ma \rangle$.
Then $S$ is a simple monoid, and obviously $S$ does not have plain paradoxes, since $a$ is an atom in $S$, and $a$ is directly finite in $S$. 
By \cite[Corollary 2.7]{weh1998}, there exists an order-embedding $\iota \colon S \to N$, where $N$ is a simple conical refinement monoid.  By a standard argument, there exists a countable simple conical refinement submonoid $M$ of $N$ containing $\iota (S)$. Note that $2\iota (a)\nleq \iota (a)$ because $\iota $ is an order-embedding. Hence $M$ does not have plain paradoxes. Now by Theorem 
\ref{thm:wehrung-realization}, there exist a locally compact Hausdorff metrizable  zero-dimensional space $X$ and an action of a discrete group $G$ by homeomorphisms on $X$ such that $M\cong \Typ (X,G,\mathbb K)$. Since $\Typ (X,G,\mathbb K)$ is simple, the action is minimal, as desired. 
\end{proof}

\section{Special properties of type semigroups}
\label{sect:specialprops}

In this section we will state some properties of type semigroups in certain favorable circumstances. In particular, we review an interesting result of Wehrung \cite[Theorem 5.3.8]{weh}. This result goes far beyond the framework of minimal actions. In the final part of the section, we will describe how to compute the type monoid associated to any finite bipartite separated graph, and we will consider the specific example of the type monoid of the full shift, which illustrates very well this computation, and establishes a link to symbolic dynamics.

\subsection{Different levels of cancellation in commutative monoids}
\label{subsect:levels-of-cancellation}

We will follow \cite{AGOP} and the recent paper \cite{agnopp} concerning notation and terminology. 

Let $M$ be a commutative monoid. We say that $M$ is {\it cancellative} 
if $x + z = y + z$ implies $x = y$ for any $x, y, z \in  M$. We say that 
$M$ is {\it separative} (or has separative cancellation) if $2x = x + y = 2y$ implies $x = y$ for
every $x, y  \in M$. We say that $M$ is {\it strongly separative} if $2x = x + y$ implies $x = y$ for any
$x, y \in  M$.

Observe that a cancellative monoid is necessarily stably finite. 

For a commutative monoid $M$, separativity is equivalent to cancellation of ``two-sided small" elements: $M$ is separative if and only if whenever we have
$x+z=y+z$ and $z\le nx$, $z\le my$ for some $n,m\ge 1$, then we have $x=y$, see \cite[Lemma 2.1]{AGOP}. 

Similarly, strong separativity is equivalent to cancellation of ``one-sided small" elements: $M$ is strongly separative if and only if whenever we have
$x+z=y+z$ and $z\le nx$ for some $n\ge 1$, then we have $x=y$, see \cite[page 126]{AGOP}. 

Separativity leads to a strong form of dichotomy in the simple case, that is, a simple conical monoid $M$ is separative if and only if it is either cancellative or purely infinite. 
If a simple conical monoid is strongly separative then it is necessarily cancellative.

We now state the definition of a Boolean inverse monoid, see for instance \cite{weh}.

Recall that two elements $s,t$ of an inverse semigroup are said to be {\it orthogonal} if $s^*t= 0= st^*$. 

\begin{definition}
	\label{def:BIS}
	A {\it Boolean inverse semigroup} is an inverse semigroup $S$ such that the set of idempotents is a (generalized) Boolean algebra and such that every pair of orthogonal elements $x,y\in S$ has a supremum, denoted by $x\oplus y$. 
\end{definition}

We denote the semilattice of idempotents of an inverse semigroup by $\mathcal E (S)$. 

Given a Boolean inverse semigroup $S$, one can define the {\it type monoid} $\text{Typ} (S)$ as the commutative monoid with generators $\text{typ} (x)$, where $x\in \mathcal E (S)$, and relations:
\begin{enumerate}
	\item $\text{typ} (0) = 0$,
	\item $\typ (x) = \typ (y)$ whenever $x,y\in \mathcal E (S)$ and there is some $s\in S$ such that $x= s^*s$ and $y= ss^*$. 
	\item $\typ (x\oplus y) = \typ (x) + \typ (y) $ whenever $x,y\in S$ are orthogonal.  
\end{enumerate} 

The type monoid is always a conical refinement monoid, see \cite[Corollary 4-1.4]{weh}. 
Let $\mathcal G$ be an ample Hausdorff groupoid. Then the type monoid $\Typ (\mathcal G)$ of $\mathcal G$ as defined in these notes agrees with the type monoid $\Typ (B(\mathcal G))$, where $B(\mathcal G)$ is the Boolean inverse semigroup of open compact bisections of $\mathcal G$.  

As mentioned above, this type semigroup $\Typ (\mathcal G)$ also agrees with other versions of the type semigroup, as defined in \cite{bl20}, \cite{PSS}, \cite{RS}. 

In the case where $G$ is a discrete group acting on a zero-dimensional, metrizable locally compact Hausdorff space $X$, the Boolean inverse monoid associated to the transformation groupoid of the action
is isomorphic to the Boolean inverse monoid $\pHomeo (X,G)$, defined as the set of partial homeomorphisms of $X$ defined on open compact subsets of $X$ and such that are locally in $G$. More precisely, we consider finite families $(U_1,\dots ,U_n)$, $(V_1,\dots ,V_n)$, and $(g_1,\dots , g_n)$ such that $U_i,V_i$ are open compact subsets of $X$ and $g_i\in G$, for $i=1,\dots ,n$, with the property that the $U_i$'s are mutually disjoint, the $V_i$'s are mutually disjoint, and $g_i(U_i)= V_i$ for all $i$. We say that $g_1,\dots ,g_n$ is the support of the corresponding partial homeomorphism $$g:= g_1\sqcup \cdots \sqcup g_n\colon \bigsqcup_{i=1}^n U_i \to \bigsqcup_{i=1}^n V_i.$$ 

Following Wehrung \cite[Definition 5.3.1]{weh}, we define a {\it fork} in a Boolean inverse semigroup $S$ as a triple $(c,g_1,g_2)$ of elements of $S$ such that $c\in \mathcal E(S)$, $g_1,g_2\in S$, $c\le d(g_i):= g_i^* g_i$ for $i=1,2$ and $g_1\langle c \rangle g_2\langle c \rangle = 0$. Here $g\langle c\rangle = gcg^{-1}$ for $c\in \mathcal E (S)$ and $g\in S$. For $n\in \Z^+$ the expression $\langle g_1,g_2\rangle ^{-n} (c)$ denotes the product of all the elements $g^{-1}\langle c \rangle$, where $g= g_{i_1}g_{i_2}\cdots g_{i_r}$ with $r\le n$ and $i_j\in \{1,2\}$ for all $j$. Note that, setting $c_n = \langle g_1,g_2\rangle ^{-n}(c)$, we have 
$$c_{n+1} = g_1^{-1}\langle c_n \rangle  g_2^{-1} \langle c_n \rangle c_n$$
for all $n$.  

A fork $(c,g_1,g_2)$ is {\it nilpotent} if there is a non-negative integer $n$ such that 
$c_n= \langle g_1,g_2\rangle ^{-n} (c)= 0$.   

The following result, due to Wehrung, is the main technical tool needed to show Theorem \ref{thm:supramenable} below. 

\begin{theorem} {\rm \cite[Theorem 5.3.4]{weh}}
	\label{thm:nilpot-fork} Let $S$ be a Boolean inverse monoid. If every fork in $S$ is nilpotent, then the type monoid $\Typ (S)$ is strongly separative. 
\end{theorem}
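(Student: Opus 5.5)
The plan is to argue by contradiction, converting a failure of strong separativity in $\Typ (S)$ into a non-nilpotent fork in $S$ (or in a matrix amplification of $S$).

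\textbf{Step 1: realizing elements by idempotents.} Recall that $\Typ (S)$ is a conical refinement monoid. I first want to say that every element of $\Typ (S)$ has the form $\typ (e)$ for a single idempotent $e$. In general this holds only after passing to a matrix amplification $M_k(S)$, using $\Typ (S)\cong \Typ (M_k(S))$. So I will also check that the hypothesis ``every fork is nilpotent'' passes from $S$ to $M_k(S)$. I expect this to be routine by writing a fork in $M_k(S)$ entrywise, but it must be verified.

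\textbf{Step 2: equality in $\Typ (S)$ means equivalence in $S$.} In a Boolean inverse monoid, $\typ (e)=\typ (f)$ should hold exactly when $e=s^*s$ and $f=ss^*$ for a single $s\in S$. One direction is a relation. For the other, an equality coming from the defining relations is a finite chain of equidecompositions. Using refinement, these can be arranged into orthogonal pieces, and the orthogonal joins $\oplus$ then glue the pieces into one partial isometry. The same argument shows that $\typ (e)\le \typ (f)$ if and only if $e=s^*s$ with $ss^*\le f$ for some $s$.

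\textbf{Step 3: building a fork.} Suppose $2x=x+y$ but $x\ne y$. I will use the reformulation that strong separativity means cancellation of one-sided small elements. This lets me reduce to a situation in which some nonzero $z=\typ (c)$ satisfies $2z\le z+w$, where the configuration persists along a sequence of refinements. Concretely, I aim to produce an idempotent $c\ne 0$ and elements $g_1,g_2\in S$ with $c\le d(g_i)$ and $g_1\langle c\rangle\, g_2\langle c\rangle=0$, so that $(c,g_1,g_2)$ is a fork. The two elements $g_1,g_2$ should realize, via Step 2, two disjoint copies of $c$ inside a region that is again ``paradoxical in the same way.''

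\textbf{Step 4: the main obstacle, non-nilpotency.} I must show inductively that $c_n=\langle g_1,g_2\rangle^{-n}(c)\ne 0$ for every $n$. Here $c_n$ is the set of points of $c$ whose images under all words of length at most $n$ in $g_1,g_2$ stay inside $c$. The naive fork from Step 3 may leak: images under $g_i$ can fall outside $c$ (into the part accounting for $y$). The fix I would try first uses $2x=x+y$ together with refinement to choose $c$ and the $g_i$ so that $g_i\langle c\rangle\le c$ on a nonzero piece. Equivalently, I would replace $c$ by the intersection of its iterated preimages and use conicality plus refinement to show that this intersection has nonzero type at each finite stage. The inductive step should come from the recursion
$$c_{n+1}=g_1^{-1}\langle c_n\rangle\, g_2^{-1}\langle c_n\rangle\, c_n .$$
If some $c_n=0$, translating back through Step 2 would force $\typ (c)$ to cancel against the small element in the relation, giving $x=y$. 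This contradicts the assumption, so the fork is not nilpotent, contrary to the hypothesis. I expect the precise bookkeeping needed to make $c$ ``self-paradoxical'' to be the hardest part of the argument.
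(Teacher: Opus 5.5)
There is a genuine gap at the heart of your sketch. The paper does not prove this statement itself (it only cites Wehrung), so I am judging the sketch on its own terms.

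\textbf{The missing implication.} Everything hinges on one implication: if $c_n=0$ for some $n$, then the relation cancels and $x=y$. You state this only as ``translating back through Step~2 would force $\typ(c)$ to cancel,'' with no mechanism behind it. Conicality and refinement of $\Typ(S)$ cannot supply one. By Wehrung's realization theorem recalled in the paper, every countable conical refinement monoid is a type monoid. So the information has to come from the concrete dynamics of $g_1,g_2$ inside $S$.

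\textbf{The proposed fixes go the wrong way.} In Steps~3--4 you suggest arranging $g_i\langle c\rangle\le c$ on a nonzero piece, or showing that $\bigcap_n c_n$ has nonzero type. Either of these amounts to exhibiting a non-nilpotent fork. That is exactly what the hypothesis excludes, and it cannot be extracted from $x\neq y$. The ``leak'' of $g_i\langle c\rangle$ outside $c$ is not a defect to repair; it is precisely what the argument uses.

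\textbf{The missing idea.} Work in the basic case $x=\typ(c)$, $y=\typ(d)$ with $cd=0$, reasoning with points via Stone duality.
\begin{enumerate}
\item From $2x=x+y$ and your Step~2, $c\oplus d=c_1\oplus c_2$ with $c_i=g_ig_i^*$ and $g_i^*g_i=c$. Then $(c,g_1,g_2)$ is already a fork, so $c_n=0$ for some $n$.
\item Each $t\in c$ has two distinct children $g_1t,g_2t\in c\oplus d$. Since the $g_i$ are injective and $g_1\langle c\rangle g_2\langle c\rangle=0$, every point of $c\oplus d$ has exactly one parent, and the label of that edge is determined.
\item Let the depth of $t\in c$ be the least $k$ with $t\notin c_k$, so $1\le k\le n$. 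By the recursion for $c_k$, some child of $t$ lies outside $c_{k-1}$. That child has depth exactly $k-1$, or lies in $d$ when $k=1$.
\item Choose such a child $\kappa(t)$ Booleanly: $\kappa=g_1$ on $(c_{k-1}\setminus c_k)\setminus g_1^{-1}\langle c_{k-1}\rangle$, and $\kappa=g_2$ on the rest of level $k$. Let $\lambda(t)$ be the other child.
\item Put $\phi(t)=\kappa^j(\lambda(t))$, where $j$ is the depth of $\lambda(t)$ (and $j=0$ if $\lambda(t)\in d$).
\end{enumerate}
Then $\phi$ has the following properties.
\begin{itemize}
\item It is an orthogonal join of finitely many restrictions of words of length $\le n+1$ in $g_1,g_2$, hence an element of $S$.
\item It maps $c$ into $d$.
\item It is injective: from $\phi(t)$, climb along $\kappa$-edges until the first $\lambda$-edge; the upper end of that edge is $t$.
\item It is onto $d$: every point of $c\oplus d$ has a parent, and each climb along a $\kappa$-edge raises depth by one. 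So the climb meets a $\lambda$-edge within $n$ steps.
\end{itemize}
Hence $\phi^*\phi=c$ and $\phi\phi^*=d$, i.e.\ $x=y$.

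\textbf{Step~1 also needs work.} Reading a fork of $M_k(S)$ entrywise does not produce a fork of $S$, because collapsing coordinates can destroy the orthogonality of $g_1\langle c\rangle$ and $g_2\langle c\rangle$. You still need one of two things: a genuine proof that nilpotency of forks passes to $M_k(S)$, or a refinement argument that reduces strong separativity to types of orthogonal idempotents of $S$.
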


Recall that a group $G$ is supramenable if no nonempty subset of $G$ is paradoxical with respect to the natural left action of $G$ on itself. 

The following theorem was proven in an equivalent way by Wehrung. We present a proof (essentially his proof)
here for convenience of the reader. 

\begin{theorem} {\rm \cite[Theorem 5.3.8]{weh}}
	\label{thm:supramenable}
	Let $G$ be a supramenable group acting by homeomorphisms on a zero-dimensional, locally compact Hausdorff space $X$. Then the type semigroup $\Typ (X,G,\mathbb K)$ is strongly separative. 
\end{theorem}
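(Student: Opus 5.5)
The plan is to reduce the statement to Theorem \ref{thm:nilpot-fork}, applied to the Boolean inverse monoid $S=\pHomeo (X,G)$ (adjoining a unit if needed; this does not change the type monoid). As recalled above, $S$ is isomorphic to the Boolean inverse semigroup of open compact bisections of the transformation groupoid, so $\Typ (S)\cong \Typ (X,G,\mathbb K)$. It then suffices to show that every fork $(c,g_1,g_2)$ in $S$ is nilpotent. We argue by contradiction: assume $c_n\neq 0$ for all $n$, and use this to produce a paradoxical subset of $G$.

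First, we reduce to forks whose components are single group elements. Write $g_1=\bigsqcup_i h_i|_{U_i}$ and $g_2=\bigsqcup_j k_j|_{W_j}$ with $h_i,k_j\in G$. Every element of $S$ is of this form, so $g_1,g_2$ are only locally in $G$. Since $c_n\ne 0$ for all $n$ and $c_n$ is decreasing, compactness gives a point $x\in\bigcap_n c_n$. Here we identify idempotents with compact open sets, and $c_n$ is the set of points $y\in c$ such that every word of length $\le n$ in $g_1^{-1},g_2^{-1}$ maps $y$ back into $c$. Hence $x$ has the following property: for every finite word $w$ in $g_1^{-1},g_2^{-1}$, the point $w\cdot x$ is defined and lies in $c$.

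Next, consider the orbit set $A=\{\gamma\in G: \gamma x\in c\}$, or better the set of group elements realizing the words applied to $x$. Each application of $g_i^{-1}$ to a point $y\in c$ is multiplication by some group element from the finite supports, chosen according to which piece contains $y$. Define $T\subseteq G$ as the set of group elements $\gamma$ such that $\gamma$ is the accumulated group element of some word $w$ with $w\cdot x=\gamma x$.

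The key step, and the main obstacle, is to show that $T$ (or a suitable set of points in $Gx$ pulled back to $G$) is paradoxical. Because $g_1\langle c\rangle g_2\langle c\rangle=0$, the sets $g_1(c)$ and $g_2(c)$ are disjoint. So the maps $y\mapsto g_i(y)$ send the set $Y=\{w\cdot x\}\subseteq c$ into two disjoint subsets of $Y$; more precisely $g_i(Y)\subseteq Y$, and the two images are disjoint. Each $g_i$ is piecewise given by finitely many elements of $G$. This yields a $G$-paradoxical decomposition of $Y\subseteq Gx$: $Y$ contains two disjoint pieces, each $G$-equidecomposable with $Y$ using finitely many group elements.

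We then transfer this to $G$ itself, which is the part I expect to need the most care. Take $P=\{\gamma\in G:\gamma x\in Y\}$. Left multiplication by $h\in G$ corresponds to the action on the orbit, and the decomposition of $Y$ lifts to a decomposition of $P$. The relevant pieces are $\{\gamma:\gamma x\in Y\cap U_i\}$, translated by $h_i$, and equidecomposability is preserved under taking preimages of the orbit map $G\to Gx$, $\gamma\mapsto\gamma x$. So $P$ is a nonempty paradoxical subset of $G$, which contradicts supramenability. Hence every fork is nilpotent, and Theorem \ref{thm:nilpot-fork} gives strong separativity.
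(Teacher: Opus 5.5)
Your overall route is the paper's. You reduce via Theorem~\ref{thm:nilpot-fork} to nilpotency of forks in $\pHomeo(X,G)$, use compactness of the decreasing nonempty compact open sets $c_n$ to find a surviving point $x$, and pull back along $\gamma\mapsto\gamma x$ (the paper's map $\varphi$ with $t_0=x$) to get a nonempty paradoxical subset of $G$. Pulling back the orbit $Y$ of a single point instead of the compact set $K=\bigcap_n C_n$ is a cosmetic change. Your explicit treatment of the pieces $h_j|_{U_j}$ of $g_i$, translating the sets $\{\gamma:\gamma x\in Y\cap U_j\}$ by $h_j$, is correct. It fills in what the paper compresses into ``$\varphi(g_iK)=g_i\varphi(K)$''.

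However, you have the direction of $c_n$ reversed, and as written the key step fails. Since $g^{-1}\langle c\rangle=g^{-1}cg$ corresponds to $\{y\in\mathrm{dom}(g): g(y)\in c\}$, $c_n$ is the set of points of $c$ whose images under all words of length $\le n$ in $g_1,g_2$ (applied forward) remain in $c$. It is not defined by words in $g_1^{-1},g_2^{-1}$. Your reading cannot be the right one: a point $y$ with $g_1^{-1}(y),g_2^{-1}(y)\in c$ lies in $g_1(c)\cap g_2(c)=\emptyset$. So under your reading $c_1$ would vanish for every fork, and supramenability would play no role. Moreover, an orbit under words in $g_1^{-1},g_2^{-1}$ is closed under $g_i^{-1}$, not under $g_i$. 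So your claim $g_i(Y)\subseteq Y$ does not follow from your definition of $Y$.

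The fix is to take $x\in\bigcap_n c_n$ with the correct $c_n$, and to let $Y=\{w(x): w\text{ a word in } g_1,g_2\}$ be the forward orbit. Then $Y\subseteq c\subseteq \mathrm{dom}(g_i)$, $g_i(Y)\subseteq Y$, $g_1(Y)\cap g_2(Y)\subseteq g_1(c)\cap g_2(c)=\emptyset$, and $1\in P$. The rest of your argument then goes through verbatim. The announced reduction to forks with single-group-element components is never carried out and is not needed, and the sets $A$ and $T$ can simply be dropped.
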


\begin{proof}
	By Theorem \ref{thm:nilpot-fork}, it suffices to show that all forks in $\pHomeo (X,G)$ are nilpotent. Set $S:= \pHomeo (X,G)$. 
	
	The set of idempotents of $S$ is the set of identities $1_K$ of open compact subsets $K$ of $X$.
	Note that for $g\in S$ we have $g1_Kg^{-1} = 1_{g(K\cap \text{dom} (g))}$. Hence we may identify $\mathcal E (S)$ with the algebra of open compact subsets of $X$ and $g\langle 1_K \rangle$ with 
	$g(K\cap \text{dom} (g))$.   
	
	Let $(c,g_1,g_2)$ be a fork of $S$. Then $c$ may be identified with an open compact subset, say $C$, of $X$, such that $C$ is contained in the domain of $g_i$, for $i=1,2$, and 
	$$g_1(C) \cap g_2(C) =\emptyset.$$  
	Now consider the sets $C_n$ defined inductively by 
	$$C_0= C, \qquad C_{n+1} = g_1^{-1}(C_n\cap \text{range} (g_1))\cap g_2^{-1}(C_n\cap \text{range} (g_2)) \cap C_n.$$
	
	Suppose by way of contradiction that $(c,g_1,g_2)$ is not nilpotent. Then we have that $C_n\ne \emptyset$. Observe that then the sequence $\{C_n\}$ is a decreasing sequence of non-empty compact subsets of $X$ and hence $K:= \cap_{n=0}^{\infty} C_n \ne \emptyset$.  Observe that $K$ is a compact subset of $X$ with $K\subset \text{dom}(g_1)\cap \text{dom} (g_2)$ and
	$$g_1(K)\cap g_2(K) \subseteq g_1(C)\cap g_2 (C) =  \emptyset.$$
	Moreover $g_i(K) \subseteq K$ for $i=1,2$. 
	We now want to ``translate" these relations to $G$, in order to contradict the hypothesis that $G$ is supramenable. For this, we fix a point $t_0\in K$ and consider the map
	$\varphi \colon \mathcal P (X) \to \mathcal P (G)$  (where $\mathcal P (A)$ denotes the powerset of a set  $A$) defined by
	$$\varphi (Y) = \{ g\in G : g(t_0)\in Y \} \qquad (Y\subseteq X).$$
	It is straightforward to check that $\varphi$ preserves arbitrary unions and intersections and sends the emptyset to itself. Observe also that $\varphi (K)\ne \emptyset$ and $\varphi (gY)= g\varphi (Y)$ for each $Y\in \mathcal P (X)$. 
	
	But now, setting $Z= \varphi (K)$ and $Z_i= \varphi (g_iK)= g_i \varphi (K)= g_iZ$ for $i=1,2$, we have   $Z\ne \emptyset $, $Z_1\cup Z_2 \subseteq Z$, $Z_1\cap Z_2 = \emptyset$ and $Z\sim_G Z_i$ for $i=1,2$. Hence $Z$ is $G$-paradoxical which contradicts the hypothesis that $G$ is supramenable.
\end{proof}

The following corollary is immediate:

\begin{corollary}
\label{cor:minimalcase-cancellative} 
Let $G$ be a supramenable group acting minimally on a compact zero-dimensional space $X$. Then the type semigroup $\Typ (X,G,\mathbb K)$ is cancellative.
\end{corollary}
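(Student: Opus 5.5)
By Theorem \ref{thm:supramenable}, the type semigroup $M:=\Typ (X,G,\mathbb K)$ is strongly separative. The remaining work is to use minimality to make $M$ simple, and then to observe that simple plus strongly separative forces cancellation. This last fact is the one mentioned just before Definition \ref{def:BIS}; I will spell it out using the characterization of strong separativity as cancellation of one-sided small elements.

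\emph{Step 1: $M$ is conical and simple.} Conicality holds for every type monoid. For simplicity, let $U$ be a nonempty open compact subset of $X$. Minimality means every orbit is dense, so for each $x\in X$ there is $g\in G$ with $g^{-1}x\in U$, that is, $x\in gU$. Since $X$ is compact, finitely many translates cover it: $X=g_1U\cup\cdots\cup g_kU$. Disjointify by setting $W_i:=g_iU\setminus\bigcup_{j<i}g_jU$; each $W_i$ is open compact because $X$ is zero-dimensional and Hausdorff. Then $g_i^{-1}W_i\subseteq U$, so in $M$ we get
$$[X]=\sum_i [W_i]=\sum_i[g_i^{-1}W_i]\le k[U].$$
Every nonzero element $a\in M$ is a finite sum of classes of nonempty open compact sets, so $[U]\le a$ for some nonempty $U$. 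Any $b\in M$ satisfies $b\le r[X]$ for some $r$. Combining these, $b\le rk\,a$. Hence every nonzero element is an order-unit, and $M$ is simple.

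\emph{Step 2: cancellation.} Suppose $x+z=y+z$ in $M$. If $z=0$ there is nothing to prove. If $x=0$, then $z=y+z$. When $z\ne 0$ and $y\ne 0$, simplicity gives $z\le ny$, and strong separativity applied to $y+z=0+z$ cancels $z$ to give $y=0$, a contradiction. So $y=0=x$, and the case $y=0$ is symmetric. If instead $x\ne 0$ and $z\ne 0$, then $z\le nx$ for some $n$ because $x$ is an order-unit. Strong separativity in its cancellation form \cite[page 126]{AGOP} then gives $x=y$.

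Step 1 is the only real work. The one point needing care is that the translates must be disjointified inside $\mathbb K$, which is where zero-dimensionality and the compactness of $X$ are used.
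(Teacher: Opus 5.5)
Your proof is correct and follows the paper's route: the paper gets strong separativity from Theorem \ref{thm:supramenable} and then just asserts that minimality yields cancellation, and your Steps 1 and 2 fill in that implication (minimality makes the type monoid simple, and a simple strongly separative monoid is cancellative). One small remark: the sets $W_i$ are compact open simply because compact open subsets of a compact Hausdorff space are clopen, so zero-dimensionality is not actually what is used in the disjointification step.
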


\begin{proof}
By Theorem \ref{thm:supramenable}, the type semigroup $\Typ (X,G,\mathbb K)$ is strongly separative. 
Since the action is minimal, it is cancellative. 
\end{proof}

It seems to be an open problem whether the type semigroup $\Typ (X,G,\mathbb K)$ of a minimal action of an amenable group on a Cantor set is cancellative, see \cite[Proposition 2.20]{Melleray}
and the comments after it. 

There are other properties of interest concerning type semigroups. A conical monoid $M$ is said to be {\it unperforated} in case, for each positive integer $n$, we have that $na\le nb$ in $M$ implies that $a\le b$ in $M$. By a result of Chen \cite[Theorem 1]{chen}, an unperforated refinement monoid is necessarily separative. 

Any cancellative and unperforated conical refinement monoid is a direct limit of finitely generated free commutative monoids (see \cite[Theorem 3.14]{ag15}). These are precisely the 
positive cones of dimension groups. 
Generalizing the above, we may consider {\it tame refinement monoids} \cite{ag15}, which, by definition, are those refinement monoids that may be written as a direct limit of
a directed system of finitely generated refinement monoids. Tame refinement monoids are always unperforated and separative \cite[Theorem 3.14]{ag15}. If a simple conical refinement monoid is
tame, then it is either cancellative (and thus the positive cone of a dimension group) or purely infinite. Hence a strong form of the dichotomy principle holds in this case.

A refinement monoid is said to be {\it wild} if it is not tame. See \cite{ag15} and also Theorem \ref{thm:type-semigroup-lamplighter} below for some examples of wild refinement monoids related to 
specific examples of separated graphs.

We can now state an important structural result for the type monoid associated to a self-similar action. 

\begin{theorem}
\label{thm:type-monoid-self-similar}
Let $\mathcal G _{G,E}$ be the groupoid associated to a self-similar graph $(G,E)$. Then the type monoid $\Typ (\mathcal G_{G,E})$ is a tame refinement monoid.
In particular $\Typ (\mathcal G_{G,E})$ is always unperforated and separative. 
\end{theorem}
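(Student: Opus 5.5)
By \eqref{eq:type-semigroup-ssactions}, i.e. \cite[Proposition 7.7]{KMP25} together with Lemma \ref{lem:coinvariants}, we have $\Typ (\mathcal G_{G,E}) \cong M(E)_G \cong M(E_G)$, where $E_G$ is Larki's quotient graph. Since $E$ is row-finite, every vertex $[v]$ of $E_G$ receives exactly $|r^{-1}(\ol v)|<\infty$ edges, so $E_G$ is again a row-finite graph, and it is countable. The statement therefore reduces to the claim that the graph monoid $M(F)$ of any countable row-finite graph $F$ is a tame refinement monoid. After that, unperforation and separativity follow at once from \cite[Theorem 3.14]{ag15}, which says that tame refinement monoids have both properties.

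The refinement property of $M(F)$ is known from \cite[Proposition 4.4]{amp07}. For tameness I would write $M(F)$ as a direct limit of graph monoids of finite graphs. First, enumerate the vertices and choose an increasing sequence of finite subgraphs $F_1\subseteq F_2\subseteq\cdots$ with $\bigcup F_k = F$. Each $F_k$ should be \emph{complete} in the sense of \cite{amp07}: for each vertex $v$ of $F_k$, either all of $r^{-1}(v)$ lies in $F_k$, or no edge into $v$ lies in $F_k$. Such subgraphs exist because $r^{-1}(v)$ is finite, so we can add, for a chosen finite set of vertices, all edges into them together with their sources. In $F_k$, the vertices of the second kind become sources, so their relations in $M(F_k)$ are simply dropped. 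The inclusions then induce monoid homomorphisms $M(F_k)\to M(F_{k+1})\to\cdots$, and $M(F)\cong\varinjlim M(F_k)$ because every relation of $M(F)$ involves only finitely many generators; this is \cite[Lemma 3.4]{amp07}. Each $M(F_k)$ is finitely generated, being generated by the finitely many $a_v$, and it is a refinement monoid by the same \cite[Proposition 4.4]{amp07} applied to the finite graph $F_k$. Hence $M(F)$ is a direct limit of finitely generated refinement monoids, i.e. tame. Transporting through the isomorphism gives the statement for $\Typ (\mathcal G_{G,E})$.

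The main obstacle is the bookkeeping that makes the direct limit honest. One must check that the connecting maps $M(F_k)\to M(F_{k+1})$ are well defined. They are: every relation of $M(F_k)$ at a vertex $v$ uses the full set $r^{-1}(v)$, which is also present in $F_{k+1}$, so the relation still holds there. One must also check that no extra relations appear in the limit, which follows from compactness of presentations. I expect no difficulty with refinement itself, since it is quoted from \cite{amp07}.

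If \cite[Proposition 7.7]{KMP25} is only available for finite $E$ or for graphs without sources, the first step needs adjusting. In that case I would invoke the same result in the generality it is stated in \eqref{eq:type-semigroup-ssactions}, namely a row-finite graph without sources, which is the setting of the self-similar graphs considered here.
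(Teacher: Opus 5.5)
Your proposal is correct and follows the paper's route: you reduce via $\Typ(\mathcal G_{G,E})\cong M(E)_G\cong M(E_G)$ to the graph monoid of a row-finite graph, then use tameness of graph monoids together with \cite[Theorem 3.14]{ag15}. The only difference is that the paper simply cites \cite[Theorem 4.1]{ag15} for tameness of $M(E_G)$, whereas you prove it directly by writing $M(E_G)$ as the direct limit of the finitely generated refinement monoids $M(F_k)$ of finite complete subgraphs; that argument is valid.
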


\begin{proof}
Since $\Typ (\mathcal G_{G,E}) \cong M(E_G)$, where $E_G$ is the graph from Lemma \ref{lem:coinvariants}, it suffices to show the result for a graph monoid $M(E)$. This is done in \cite[Theorem 4.1]{ag15}. 
\end{proof}

It follows in particular that a strong form of the dichotomy principle holds for the type monoids of self-similar graphs in the simple case. Indeed, if $\Typ (\mathcal G_{G,E})$ is a simple monoid, then it is either the positive cone of a dimension group or purely infinite. 

\subsection{The type semigroup of a separated graph II}
\label{subsect:type-semig-sparated-graph}

Let $(E,C)$ be a finite bipartite separated graph, and let $\mathcal G _{E,C}$ be the associated ample groupoid, see Subsection \ref{subsect:groupoid-model}. 

Recall that the type monoid of a self-similar graph $(G,E)$ is isomorphic to the graph monoid $M(E_G)$, where $E_G$ is the graph first considered by Larki in \cite{Larki}.
It was shown in \cite[Proposition 4.4]{amp07} that the graph monoid $M(E)$ of any row-finite graph $E$ is a refinement monoid. However, for a separated graph $(E,C)$, the graph monoid $M(E,C)$ is not necessarily a refinement monoid, even for finite bipartite separated graphs \cite{ag12}, and thus, since the type semigroup $\Typ (\mathcal G_{E,C})$ is always a refinement monoid, it cannot happen that $M(E,C)\cong \Typ (\mathcal G_{E,C})$ in general. 

However, the type semigroup $\Typ (\mathcal G _{E,C})$ can be computed through the construction of an infinite separated graph associated to $(E,C)$, called the {\it canonical resolution} of $(E,C)$.
This infinite separated graph is a colored version of a Bratteli diagram, called a {\it separated Bratteli diagram}, which is defined as follows.

\begin{definition}\cite{al18, ac24}\label{def:separated.Bratteli}
	A \textit{separated (or colored) Bratteli diagram} is an infinite separated graph $(F,D)$ with the following properties:
	\begin{enumerate}[(a)]
		\item The vertex set $F^0$ is the union of finite, non-empty, pairwise disjoint sets $F^{0,j}$, $j \ge 0$.
		\item The edge set $F^1$ is the union of finite, non-empty, pairwise disjoint sets $F^{1,j}$, $j\ge 0$.
		\item The range and source maps satisfy $r(F^{1,j})=F^{0,j}$ and $s(F^{1,j})=F^{0,j+1}$ for all $j\ge 0$, respectively.
	\end{enumerate}
\end{definition}

Note that a Bratteli diagram is just a separated Bratteli diagram with the trivial separation.

We now recall the construction of the canonical resolution of a finite bipartite separated graph, see \cite{ae14} and \cite{al18}.
In the following, for a finite sequence $(x_1,\dots , x_n)$ and $1\le i\le n$, we will denote by $(x_1,\dots \widehat{x_i},\dots , x_n)$ the sequence
obtained by deleting $x_i$ from the original sequence, that is $(x_1,\dots \widehat{x_i},\dots , x_n) = (x_1,\dots, x_{i-1},x_{i+1},\dots , x_n)$. 

\begin{definition}\label{def:F.infty.and.others}
	Let $(E,C)$ be any finite bipartite separated graph, and write
	$$C_u = \{X_1^u,\dots,X_{k_u}^u\}$$
	for all $u \in E^{0,0}$. Then the \textit{$1$-step resolution} of $(E,C)$ is the finite bipartite separated graph denoted by $(E_1,C^1, r_1,s_1)$, and defined by
	\begin{enumerate}[(a)]
		\item $E_1^{0,0} := E^{0,1}$ and $E_1^{0,1} := \{v(x_1,\dots,x_{k_u}) \mid u \in E^{0,0},\,  x_j \in X_j^u ,\,\, 1 \leq j \leq k_u\}$.
		\item $E_1^1 := \{\alpha^{x_i}(x_1,\dots,\widehat{x_i},\dots,x_{k_u}) \mid u \in E^{0,0}, 1 \leq i \leq k_u,\,  x_j \in X_j^u ,\,\, 1 \leq j \leq k_u \}$.
		\item $$r_1(\alpha^{x_i}(x_1,\dots,\widehat{x_i},\dots,x_{k_u})) := s(x_i)$$ 
        $$ \text{and} \qquad s_1(\alpha^{x_i}(x_1,\dots,\widehat{x_i},\dots,x_{k_u})) := v(x_1,\dots,x_{k_u}).$$
		\item $C^1_v := \{X(x) \mid x \in s^{-1}(v)\}$ for $v \in E^{0,0}_1=E^{0,1}$, where for $u \in E^{0,0}$, $1 \leq i \leq k_u$, and $x_i\in X_i^u$,
		$$X(x_i) := \{\alpha^{x_i}(x_1,\dots,\widehat{x_i},\dots,x_{k_u}) \mid x_j \in X_j^u \text{ for } j \ne i\}.$$
	\end{enumerate}
	A sequence of finite bipartite separated graphs $\{(E_n,C^n)\}_{n \ge 0}$ with $(E_0,C^0) := (E,C)$ is then defined inductively by letting $(E_{n+1},C^{n+1})$ denote the $1$-step resolution of $(E_n,C^n)$. The bipartite separated graph $(E_n,C^n)$ is called the \textit{$n$-step resolution} of $(E,C)$. Finally $(F,D)$ is the infinite layer graph
	$$(F,D) := \bigcup_{n=0}^{\infty} (E_n,C^n).$$
	It is clear by construction that $(F,D)$ is a separated Bratteli diagram, called the {\it canonical resolution} of the finite bipartite graph $(E,C)$.
\end{definition}

The main point with the canonical resolution of a finite bipartite separated graph is that the vertices in the $n$-th level of the Bratteli diagram represent the cylinder sets corresponding to an $n$-ball
in the configuration space $\Omega (E,C)$. Here an $n$-ball of $\Omega (E,C)$ is the set of all elements $\gamma \in \xi$ such that $|\gamma | \le n$, for some configuration $\xi \in \Omega (E,C)$. 

The graph monoid $M(F,D)$ is a refinement monoid, and the inclusion $(E,C) \subset (F,D)$ induces an order-embedding of monoids
$\iota \colon M(E,C) \to M(F,D)$. Indeed the monoid homomorphism $\iota$ is a unitary embedding, in the sense of \cite[Definition 3.7]{ae14}, which is a stronger property than just being an order-embedding. Moreover, by \cite[Lemma 4.5]{ae14}, the inclusion $\iota \colon M(E,C) \to M(F,D)$ is a refinement of $M(E,C)$ in the sense of \cite[Definition 4.3]{ae14}. 

We can now state a main result in \cite{ae14}.

\begin{theorem} {\rm \cite[Theorem 7.4]{ae14}}
	\label{thm:typesemigroup-bipar-separated}
	Let $(E,C)$ be a finite bipartite separated graph and let $(F,D)$ be its canonical resolution. Then there is a monoid isomorphism  $$\Typ (\mathcal G _{E,C})\cong M(F,D) \cong \varinjlim M(E_n,C^n)
    \cong \V (A(\mathcal G_{E,C})).$$ 
	\end{theorem}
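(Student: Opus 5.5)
The isomorphism $\V(A(\mathcal G_{E,C}))\cong \varinjlim \V(L(E_n,C^n))\cong \varinjlim M(E_n,C^n)$ was already obtained above. It follows from the direct limit decomposition \eqref{eq:AG-is-a direct-limit}, from \cite[Theorem 4.3]{ag12}, and from continuity of $\V$ under direct limits. The identification $M(F,D)\cong\varinjlim M(E_n,C^n)$ is purely combinatorial. Because $(F,D)$ is the union of the layers $(E_n,C^n)$, every generator $a_v$ of $M(F,D)$ lives in some layer. Every defining relation of $M(F,D)$ involves only the edges in a set $X\in D_v$, and these edges lie between two consecutive levels. Moreover, the vertices of $E_{n+1}^{0,0}$ coincide with those of $E_n^{0,1}$. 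So the connecting maps $M(E_n,C^n)\to M(E_{n+1},C^{n+1})$ are induced by the identity on the shared vertices $E_n^{0,1}=E_{n+1}^{0,0}$ together with the relations at those vertices. The direct limit then has precisely the presentation of $M(F,D)$. I would verify this by writing both universal properties and checking that the obvious maps are mutually inverse. By Theorem \ref{thm:typesemigroupforOmegaEC}, $\mathfrak t_{\mathcal G_{E,C}}$ identifies $\Typ(\mathcal G_{E,C})$ with $\V(A(\mathcal G_{E,C}))$, and this closes the chain.

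Since Theorem \ref{thm:typesemigroupforOmegaEC} may itself be the content being proved, I would instead give a direct map $\Phi\colon M(F,D)\to \Typ(\mathcal G_{E,C})$. A vertex of $F$ at level $n$ corresponds to an $n$-ball, that is, a finite left convex set $T$ centered at $1$. I would send $a_v$ to $[\Omega_T]$. The relation $a_v=\sum_{e\in X}a_{s(e)}$ for $X=X(x)\in D_v$ says geometrically that $\Omega_T$ is the disjoint union, over the choices of the next edge configuration in the $x$-direction, of cylinders $\Omega_{T'}$. Each $\Omega_{T'}$ is translated by the bisection $\Omega_{(x^{-1}\cdot T',x^{-1},T')}$ onto the cylinder of an $(n+1)$-ball recentred at $s(x)$. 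Hence the relation holds in $\Typ$ and $\Phi$ is well defined. Surjectivity holds because every compact open subset of $\Omega(E,C)$ is a finite disjoint union of cylinders $\Omega_T$, and every such cylinder is a translate of a disjoint union of cylinders of balls centred at $1$.

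The main obstacle is injectivity, that is, showing that the relation (3) in $\Typ$ imposes nothing beyond the graph relations in $M(F,D)$. My plan is to build an inverse $\Psi\colon\Typ(\mathcal G_{E,C})\to M(F,D)$. I would define $\Psi([\Omega_T])$ by refining $T$ to a disjoint union of full $n$-balls for $n$ large, recentring them, and summing the corresponding vertex generators. Independence of $n$ follows from the relations in $D$. For the bisection relation, it suffices to treat basic bisections $\Omega_{(T',g,T)}$. Translation by $g$ maps balls around $\alpha$ to balls around $g\cdot\alpha$. After refining deep enough, both sides decompose into the same multiset of ball types, possibly recentred. Here I expect to need \cite[Lemma 4.5]{ae14}: a ball read from two different centres gives equal elements in $M(F,D)$, since one can pass between adjacent centres by one step of the resolution. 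Additivity on disjoint unions is immediate. Finally, $\Psi\circ\Phi$ is the identity on generators, so $\Phi$ is an isomorphism.
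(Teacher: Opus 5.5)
Your first paragraph is exactly how the paper presents the statement. The paper gives no separate proof; it only chains Theorem~\ref{thm:typesemigroupforOmegaEC}, \eqref{eq:AG-is-a direct-limit} and \cite[Theorem 4.3]{ag12}. You are right that this would be circular as a proof of \cite[Theorem 7.4]{ae14}, because there the bijectivity of $\mathfrak t_{\mathcal G_{E,C}}$ is a consequence rather than an input.

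Your second route is correct and differs from the argument of \cite{ae14} only in the injectivity step. In \cite{ae14} (this is what the maps $\Psi_n$ mentioned in the proof of Theorem~\ref{thm:type-semigroup-lamplighter} refer to), one uses the same homomorphism as your $\Phi$ and proves surjectivity by decomposing clopen sets, as you do. Injectivity is instead obtained from the algebraic side: $\mathfrak t_{\mathcal G_{E,C}}\circ\Phi$ is the isomorphism $M(F,D)\cong\varinjlim M(E_n,C^n)\cong \V(A(\mathcal G_{E,C}))$, so $\Phi$ is injective, and the bijectivity of $\mathfrak t_{\mathcal G_{E,C}}$ comes out at the same time. Your explicit inverse avoids Leavitt path algebras and Bergman's machinery. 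It shows directly that the bisection relations of $\Typ(\mathcal G_{E,C})$ are generated by the relations encoded in $D$. The price is the recentring bookkeeping, and you still need the algebraic input of your first paragraph for the final isomorphism with $\V(A(\mathcal G_{E,C}))$.

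One correction. The recentring fact is not \cite[Lemma 4.5]{ae14}, which concerns the connecting maps $\iota_n$ being refinements (hence order-embeddings). It comes straight from the defining relations.
\begin{itemize}
\item For a vertex $u$ of $F$, the sets in $D_u$ correspond to the directions at the centre.
\item A relation $a_u=\sum_{e\in X(x)}a_{s(e)}$ is precisely the decomposition of the clopen set attached to $u$ into one-letter translates of the clopen sets attached to next-level vertices, with the centre moved one step in that direction.
\item For a basic bisection $\Omega_{(T',g,T)}$, refine $\Omega_T$ into vertex sets whose patterns contain $T$. By left convexity they then contain the whole path from $1$ to $g^{-1}$, and you apply such a relation once per letter of $g$.
\item Independence of the refinement level in the definition of $\Psi$ is the two-step identity expressing $a_u$ as the sum of the generators of the vertices $v(x_1,\dots,x_{k_u})$ attached to $u$ two levels down, i.e.\ the connecting map $\iota_n$.
\end{itemize}
The genuinely nontrivial input you borrow from \cite{ae14} is the identification of the vertices of $F$ with clopen subsets of $\Omega(E,C)$. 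Those of two consecutive levels form a partition, and these partitions refine to a basis. The paper only describes this informally, so you should state it explicitly as a cited ingredient.
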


We now consider a concrete example, namely the separated graph $(E,C)$ giving rise to the full shift on the alphabet 
$\{0,1\}$, which has been introduced in Example \ref{exam:lamplighter}.

\begin{example}
 \label{exam:lamplighter2}
  Let $(E,C)$ be the separated graph
described in Figure \ref{fig:lampgroup2}, with $C_v=\{ X, Y\}$ and
$B^0=\{ \alpha _0,\alpha_1\}$ and $R^0=\{ \beta _0,\beta _1 \}$. We call $\alpha_0,\alpha_1$ the {\it blue edges},
and $\beta_0,\beta_1$ the {\it red edges}. We have $s(\alpha_i)= i=s(\beta_i)$ and $r(\alpha_i)=r(\beta_i)= v$ for $i=0,1$.
This is the same separated graph as in Example \ref{exam:lamplighter}, but we have re-labeled the vertices $w_1$ and $w_2$, and also the edges,
to adapt the notation to the interpretation in terms of symbolic dynamics we are giving here. 

\begin{center}{
			\begin{figure}[htb]
				\begin{tikzpicture}[scale=1.5]
                \node (v) at (0,1)  {$v$};
					\node (0) at (-1,0) {$0$};
					\node (1) at (1,0) {$1$};
				        \draw[<-,blue] (v) -- (0) node[midway, below] {$\alpha_0$};
                        \draw[<-,blue] (v) -- (1) node[midway, below] {$\alpha_1$};
                       \draw[<-,bend right=30,red] (v) to node[midway,above] {$\beta_0$} (0);
                       \draw[<-,bend left=30,red] (v) to node[midway,above] {$\beta_1$} (1);
                     \end{tikzpicture}
				\caption{The separated graph underlying the lamplighter group}
				\label{fig:lampgroup2}
		\end{figure}
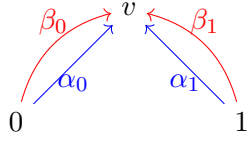}
	\end{center}

Recall from Example \ref{exam:lamplighter} that
$$v\mathcal O (E, C)v\cong C^*(\Z_2\wr \Z)\cong C(\{0,1\}^\Z)\rtimes \Z ,$$ 
and similarly
$$v\Lab (E,C)v\cong \C [\Z_2\wr \Z
	] \cong A(\{0,1\}^\Z)\rtimes \Z, $$ where $\Z_2 \wr \Z$ is the lamplighter group, $C(\{0,1\}^\Z)\rtimes \Z$ is the $C^*$-algebra  associated to the two-sided shift on the Cantor space $\{0,1\}^\Z$ of biinfinite sequences on $\{0,1\}$, and $A(\{0,1\}^\Z)\rtimes \Z$ is the corresponding algebraic crossed product of the Steinberg algebra $A(\{0,1\}^\Z)$ of locally constant functions on $\{0,1\}^\Z$. 

 We here give a dynamical interpretation in terms of the associated canonical sequence $\{(E_n, C^n)\}$ of bipartite separated graphs. 

Set $\mathcal X =\{ 0, 1 \}^{\Z}$ and let $\sigma $ be the usual shift
homeomorphism on $\mathcal X$. Set $\Omega: = \Omega (E,C)$ be the configuration space. 
 
For a finite word $a_1a_2\cdots a_n\in \{ 0,1 \}^n$, we will write
 $$ a_1a_2 \cdots a_{i-1}\underline{a_i} a_{i+1}\cdots a_n := \{ x\in \mathcal X : x_{j-i}= a_j \text{ for } j=1,2,\dots , n \}.$$
These are the cylinder sets, which form a basis of open compact subsets of the topology of $\mathcal X$. 

Observe that, regarding the space $\Omega = \Omega (E,C)$, we may identify $\Omega_v$ with $\mathcal X$, $\Omega_0$ with $\ul{0}$ and $\Omega_1$ with $\ul{1}$ .
We will explain this identification through an example. Observe that a configuration (i.e., a point in $\Omega$) can be described as in 
Figure \ref{fig:lampgroup3}. This is interpreted as a configuration $\xi$ based at $v$ (hence of the form (c.2)), that is, $\xi\in \Omega_v$, where the labeling of $v$ indicates the
choices at $1\in \mathbb F_4$, which in this case are $\alpha_0\in B^0$ and $\beta_1\in R^0$. This configuration is identified with the sequence
$$ x = \cdots 1 \underline{0} 1 1 0 \cdots $$
(Note that we have only specified $5$ values in the biinfinite sequence.)
We have $\xi = \{1,\alpha_0,\beta_1, \alpha_0\beta_0^{-1}, \beta_1\alpha_1^{-1},\dots  \}$. Now $\alpha_0^{-1} \cdot \xi$ is a configuration based at $0$, 
which is also identified with $x$. So, we look at the blue edges as identities, identifying two copies of $\ul{0}$. Now observe that
$\beta_1^{-1} \cdot \xi$ can be identified with $\sigma(x) =  \cdots 1 0\underline{1} 1 0 \cdots $, and thus $\theta_{\beta_1}\colon \ul{1} \to \Omega _v =\mathcal X$ is identified with $\sigma^{-1}$. 
In this way, we think of the red edges at this level as implementing the inverse shift $\sigma^{-1}$
on $\mathcal X$.

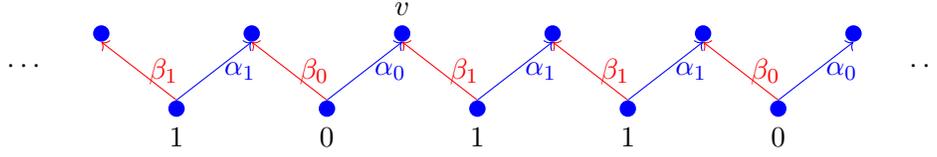
\begin{figure}[htb]
	\begin{tikzpicture}
	      \node[circle,fill=blue,scale=0.6] (A1) at (-5,1) {};  
         \node[circle,fill=blue,scale=0.6] (A2) at (-3,1) {};   
         \node[circle,fill=blue,scale=0.6,label={\text{$v$}}] (A3) at (-1,1) {};
		\node[circle,fill=blue,scale=0.6] (A4) at (1,1) {};
         \node[circle,fill=blue,scale=0.6] (A5) at (3,1) {};
          \node[circle,fill=blue,scale=0.6] (A6) at (5,1) {};
          \node[circle,fill=blue,scale=0.6, label=below:{\text{$1$}}] (B1) at (-4,0) {};
		\node[circle,fill=blue,scale=0.6,label=below:{\text{$0$}}] (B2) at (-2,0) {};
        \node[circle,fill=blue,scale=0.6,label=below:{\text{$1$}}] (B3) at (0,0) {};
        \node[circle,fill=blue,scale=0.6,label=below:{\text{$1$}}] (B4) at (2,0) {};
        \node[circle,fill=blue,scale=0.6,label=below:{\text{$0$}}] (B5) at (4,0) {};
        		\node[label=\text{$\cdots$}] (K) at (-6,0.2) {};
		\node[label=\text{$\cdots$}] (K+1) at (6,0.2) {};
		\draw[<-,red] (A1.south) -- node[below,right]{$\beta_1$} (B1.north);  
        \draw[<-,red] (A2.south) -- node[below,right]{$\beta_0$} (B2.north);  
        \draw[<-,red] (A3.south) -- node[below,right]{$\beta_1$} (B3.north);  
		\draw[<-,red] (A4.south) -- node[below,right]{$\beta_1$} (B4.north);  
        \draw[<-,red] (A5.south) -- node[below,right]{$\beta_0$} (B5.north);  
        \draw[<-,blue] (A2.south) -- node[below,right]{$\alpha_1$} (B1.north);  
        \draw[<-,blue] (A3.south) -- node[below,right]{$\alpha_0$} (B2.north);  
        \draw[<-,blue] (A4.south) -- node[below,right]{$\alpha_1$} (B3.north);  
		\draw[<-,blue] (A5.south) -- node[below,right]{$\alpha_1$} (B4.north);  
        \draw[<-,blue] (A6.south) -- node[below,right]{$\alpha_0$} (B5.north);  
        \end{tikzpicture}
        \caption{A configuration in $(E,C)$}
				\label{fig:lampgroup3}
\end{figure}

 Hence the first layer of the sequence $\{ (E_n, C^n) \}_{n\ge 0 }$ corresponds to a trivial decomposition $\mathcal X^0= \mathcal X$ and to the decomposition
 $$\mathcal X^1_0= \underline{0}, \qquad  \mathcal X^1_1= \ul{1}.$$
 The maps corresponding to the edges are the maps
 $\alpha _i \colon \mathcal X^1_i \to \mathcal X$ and $\beta _i \colon \mathcal X^1_i  \to \mathcal X$ defined by
 $\alpha _i = \text{id}|_{\mathcal X_i^1}$ and $\beta _i= \sigma^{-1}|_{\mathcal X_i^1}$.

 We now describe the clopen sets corresponding to the separated graph $(E_1,C^1)$, which we identify with the vertices of $E_1$. 
 We have $E_1^{0,0} = E_0^{0,1} = \{ \ul{0},\ul{1} \}$, and 
 $$E_1^{0,1} = \{ \ul{0}0, \ul{0}1, \ul{1}0, \ul{1}1 \}. $$
 There is a blue edge, representing inclusion, from $\ul{0}i$ to $\ul{0}$, for each $i=0,1$. 
 Similarly, there is a blue edge from $\ul{1}i$ to $\ul{1}$, for each $i=0,1$.
At this level, the red edges correspond to the shift $\sigma$. For instance, there is a red edge from $\ul{0}1$ to $\ul{1}$. In the notation of Definition \ref{def:F.infty.and.others}, this edge corresponds to $\alpha ^{\beta_1} (\alpha_0)$. 

\begin{center}{
			\begin{figure}[htb]
				\begin{tikzpicture}[scale=1.5]
					\node[circle,fill=blue,scale=0.6, label=above:{\text{$v$}}] (A1) at (0,1) {}; 
                    \node[circle,fill=blue,scale=0.6, label=above:{\text{$0$}}] (B1) at (-2,0) {};
                    \node[circle,fill=blue,scale=0.6, label=above:{\text{$1$}}] (B2) at (2,0) {};
                    \node[circle,fill=blue,scale=0.6, label=above:{\text{$00$}}] (C1) at (-3,-1) {};
                    \node[circle,fill=blue,scale=0.6, label=above:{\text{$01$}}] (C2) at (-1,-1) {};
                    \node[circle,fill=blue,scale=0.6, label=above:{\text{$10$}}] (C3) at (1,-1) {};
                    \node[circle,fill=blue,scale=0.6, label=above:{\text{$11$}}] (C4) at (3,-1) {};
                     \draw[<-,blue] (A1) to (B1);
                     \draw[<-,blue] (A1) to (B2);
                     \draw[<-,blue] (B1) to (C1);
                     \draw[<-,blue] (B1) to (C2);
                     \draw[<-,blue] (B2) to (C3);
                     \draw[<-,blue] (B2) to (C4);
                     \draw[<-,bend right=30,red] (A1) to (B1);
                     \draw[<-,bend left=30,red] (A1) to (B2);
                     \draw[<-,bend right=30,red] (B1) to (C1);
                     \draw[<-,red] (B1) to (C3);
                     \draw[<-,red] (B2) to (C2);
                     \draw[<-,bend left=30,red] (B2) to (C4);
                     \end{tikzpicture}
				\caption{The two first layers of the Bratteli diagram of the full shift}
				\label{fig:lampgroup4}
		\end{figure}
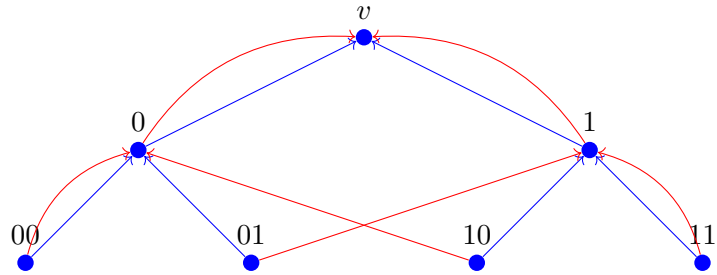}
	\end{center}

 We now describe in general the clopen sets corresponding to the separated graph $(E_n, C^n)$ for any $n\ge 1$. Let $w = a_1a_2\cdots a_n\in \{0,1 \}^n$ be a word of length $n$.
 If $n=2m$ is even, define
 $$\mathcal X^n_w := [a_1\cdots \ul{a_m}{a_{m+1}}\cdots a_{2m}].$$
 If $n=2m+1$ is odd, define
 $$\mathcal X^n_w := [a_1\cdots a_{m}\ul{a_{m+1}}a_{m+2}\cdots a_{2m+1}] .$$
 The set $E_n^{0,0}$ has exactly $2^n$ elements, and thus $E_n^{0,1}=E_{n+1}^{0,0}$ has exactly $2^{n+1}$ elements. The vertices in $E_n^{0,0}$ correspond to a decomposition
 $$\mathcal X= \bigsqcup _{w\in \{ 0,1 \}^n } \mathcal X^n_w. $$

 Let $n= 2m$ and take $w\in \{ 0,1 \}^n$. Then $C^n_w= \{ X^w_1 , X^w_2 \}$, where $X^w_1= \{ \alpha^{n+1}_{0w}, \alpha^{n+1}_{1w} \}$ and $X^w_2 = \{ \beta^{n+1}_{w0} , \beta^{n+1}_{w1} \}$.
 The edges $\alpha^{n+1}_{iw}$ correspond to the maps, denoted in the same way,  $\alpha^{n+1}_{iw} \colon \mathcal X^{n+1}_{iw} \to \mathcal X^n_w$, where each is simply the identity on the respective domain.
 Similarly, the edges $\beta^{n+1}_{wi}$ correspond to the maps  $\beta^{n+1}_{wi} \colon \mathcal X^{n+1}_{wi} \to \mathcal X^n_w$ given by the restriction of $\sigma^{-1}$ to the respective domains.

 Now let $n= 2m+1$ and take $w\in \{ 0,1 \}^n$. In this case, we have  $C^n_w= \{ X^w_1 , X^w_2 \}$,
 where $X^w_1= \{ \alpha^{n+1}_{w0}, \alpha^{n+1}_{w1} \}$ and $X^w_2 = \{ \beta^{n+1}_{0w} , \beta^{n+1}_{1w} \}$.
The edges $\alpha^{n+1}_{wi}$ again correspond to the maps, denoted in the
same way,  $\alpha^{n+1}_{wi} \colon \mathcal X^{n+1}_{wi} \to
\mathcal X^n_w$, acting as the identity on the respective domains, while the edges $\beta^{n+1}_{iw}$ correspond to the maps  $\beta^{n+1}_{iw} \colon \mathcal X^{n+1}_{iw} \to \mathcal X^n_w$ given by the restriction of $\sigma$ to the respective domains.

The separated Bratteli diagram $(F,D)= \cup _{n=0}^{\infty} (E_n,C^n)$ is the superposition of two infinite binary trees, with the same vertex set. One of the binary trees is the {\it blue skeleton}
of $(F,D)$, representing identities, and the other binary tree is the {\it red skeleton} of $(F,D)$, which alternately represents $\sigma^{-1}$ and $\sigma$. 
This construction has been generalized in \cite{ac24} to arbitrary surjective local homeomorphisms on zero-dimensional metrizable compact spaces. See also \cite{claramunt} for a review, through concrete examples, of this construction. 
 \end{example}

See \cite{al18} for more information on this example.

\medskip

We can now compute the type semigroup of the full shift. We will state the result only for a two-element alphabet, but an analogous result holds for any finite alphabet.

For a homeomorphism $\alpha$ of the Cantor set $X$, we will simply denote by $\Typ (\alpha)$ the type monoid $\Typ (X, G ,\mathbb K)$ of the associated action of $\Z$ on $X$. 

If $\alpha $ is a minimal homeomorphism on the Cantor set $X$, then its {\it topological full group} $ [[ \alpha ]]$, which is the group of units of the Boolean inverse semigroup $\pHomeo (X, \Z)$, admits a dense locally finite subgrup (see \cite{GPS95} and \cite{Mell-Robert}). Hence by \cite[Theorem 2.26]{Melleray}, the type semigroup $\Typ (\alpha)$ is cancellative and unperforated, and thus, by e.g. \cite[Theorem 3.14]{ag15}, it is the positive cone of a countable dimension group.

The next theorem shows that the above result cannot be extended to the non-minimal situation, and that the type monoid $\Typ (\alpha)$ might be even a wild refinement monoid.

\begin{theorem}
\label{thm:type-semigroup-lamplighter}
For each $n\ge 1$, set $D_n= \{0,1\}^n$ be the set of words of length $n$ over the alphabet $\{0,1\}$. Let $M_n$ be the commutative monoid generated by $D_n$ with defining relations:
$$a0+a1 = 0a+1a \qquad \text{ for all } a\in D_{n-1}.$$
Let $\varphi_n \colon M_n\to M_{n+1}$ be the monoid homomorphism defined by $\varphi_n (b) = 0b+1b= b0+b1$ for $b\in M_n$. 
Then the type semigroup $\Typ (\sigma)$ of the full two-sided shift on $\{0,1\}$ is naturally isomorphic to $M:= \varinjlim_{n\in \N} (M_n,\varphi_n)$. 
The monoid $M$ and all the monoids $M_n$, for $n\ge 2$, are strongly separative, stably finite non-cancellative monoids. Moreover,  $M$ is a wild refinement monoid, and $M$ is isomorphic to the Murray-von Neumann monoid $ V(K[\Z_2 \wr \Z])$ of the group algebra of the lamplighter, for any field $K$.
\end{theorem}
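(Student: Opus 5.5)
We will deduce the description of $\Typ (\sigma)$ from Theorem \ref{thm:typesemigroup-bipar-separated} applied to the separated graph $(E,C)$ of Example \ref{exam:lamplighter2}. First we relate $\Typ(\sigma)$ to $\Typ(\mathcal G_{E,C})$. The vertex $v$ corresponds to the full clopen set $\Omega_v \cong \mathcal X$, and the restriction of $\mathcal G_{E,C}$ to $\Omega_v$ is the transformation groupoid of the shift: blue edges act as identities and red edges as $\sigma^{\pm 1}$. Since $\Omega_v$ is a full clopen subset (every configuration can be translated into $\Omega_v$), the type monoid of the reduction is $\Typ(\mathcal G_{E,C})$ itself, and a clopen set of $\Omega_0$ or $\Omega_1$ is identified, via $\alpha_i$, with a clopen of $\mathcal X$. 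Hence $\Typ(\sigma)\cong \Typ(\mathcal G_{E,C})\cong \varinjlim M(E_n,C^n)$.

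Next we compute $M(E_n,C^n)$ from the explicit description in Example \ref{exam:lamplighter2}. The generators at level $n$ are the classes of the cylinders $\mathcal X^n_w$, $w\in D_n$. The blue relation at a vertex $w$ reads $w=0w+1w$ or $w=w0+w1$, depending on parity, and the red relation gives the other side, so in the limit both $w=0w+1w$ and $w=w0+w1$ hold. The intermediate bipartite monoid $M(E_{n},C^{n})$, with generators $E_n^{0,0}\cup E_n^{0,1}$, eliminates the $E_n^{0,0}$ generators. What remains is exactly the presentation of $M_{n+1}$: words of length $n+1$ subject to $a0+a1=0a+1a$ for $a\in D_n$ (both expressing the top vertex $a$). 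The connecting maps are then the $\varphi_n$. Finally, cofinality gives $M\cong \varinjlim M(E_n,C^n)$. The isomorphism with $V(K[\Z_2\wr\Z])$ follows from Theorem \ref{thm:typesemigroup-bipar-separated} and the corner identification $v\Lab(E,C)v\cong \C[\Z_2\wr\Z]$ of Example \ref{exam:lamplighter}, since $v$ is a full idempotent. For a general field $K$ we would redo the argument with Steinberg algebras over $K$, where the same presentation holds.

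For the properties: strong separativity of $M$ follows directly from Theorem \ref{thm:supramenable}, since $\Z$ is supramenable. Refinement holds by Wehrung's result for type monoids. Stable finiteness holds because $\Z$ is amenable, so invariant measures such as the Bernoulli measure separate elements; more precisely, $x+y=x$ forces $\mu(y)=0$ for every invariant measure, and the measures with full support give $y=0$.

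For $M_n$ we need strong separativity and stable finiteness separately. We would aim to realize $M_n$ as a graph monoid of a suitable separated graph in which refinement and the results of \cite{ag12} apply, and otherwise argue directly with the confluent rewriting of the relations. Non-cancellativity we would witness concretely. The fixed points $0^\infty$ and $1^\infty$ give invariant atoms or measures, and we look for $x\ne y$ with $x+z=y+z$, for instance built from the periodic orbit of period $2$ (the points $(01)^\infty$ and $(10)^\infty$), which produces a finite invariant set on which cancellation fails in the non-minimal system. Wildness is the main obstacle: since tame monoids are separative and unperforated, we must show $M$ is not a direct limit of finitely generated refinement monoids. Here we would use \cite{ag15}'s criteria, for example showing that $M$ fails a property enjoyed by tame monoids (such as a suitable interval/ideal structure or primitivity condition), exploiting the infinitely many periodic orbits of the shift to build infinitely many independent order-ideals incompatible with tameness.
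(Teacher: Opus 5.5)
Your identification of $\Typ(\sigma)$ follows the paper's route. Everything rests on \cite[Theorem 7.4]{ae14}, and your computation $M(E_n,C^n)\cong M_{n+1}$ with connecting maps $\varphi_{n+1}$ is correct. Strong separativity of $M$ via Theorem \ref{thm:supramenable} is also fine, as is stable finiteness via the Bernoulli measure, which is just the paper's word-counting map $\tau_n$ rescaled by $2^{-n}$.

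The gaps are in the remaining properties. For the $M_n$ you are missing the one fact that makes everything transfer: the canonical maps $M_n\to M$ are injective (indeed order-embeddings, \cite[Lemma 4.5]{ae14}). Hence the implication $2x=x+y\Rightarrow x=y$ descends from $M$ to each $M_n$. Your alternatives do not work. $M_n$ is not a refinement monoid: in $M_2$ the generators are pairwise distinct atoms (by the counting map), so the identity $00+01=00+10$ admits no refinement. A direct proof from the presentation is exactly what the author says he does not have (Remark \ref{rem:work-in-progress}). Stable finiteness of $M_n$ needs no separate treatment, since the counting homomorphism $M_n\to\Z^+$ is faithful.

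Second, you never produce a witness of non-cancellation, and the tools you name cannot separate the natural candidates. Every finite invariant measure satisfies $\mu(\ul{0}1)=\mu(\ul{1}0)$, because $\mu(\ul{0}0)+\mu(\ul{0}1)=\mu(\ul{0})=\mu(0\ul{0})+\mu(1\ul{0})$. Likewise, the period-two orbit meets each of $\ul{0}1$ and $\ul{1}0$ in exactly one point. The witness is $00+01=00+10$, which is the defining relation for $a=0$, together with $01\neq 10$. The latter holds in $M_2$ because a single generator cannot be rewritten by any relation, and it then passes to $M$ and to each $M_n$ by injectivity. Directly in $M_n$, you can use the relation for $a=0^{n-1}$. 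Dynamically, equality $[U]=[V]$ in $\Typ(\sigma)$ means clopen equidecomposability, hence an orbit-preserving bijection $U\to V$. The orbit of the point $x$ with $x_i=0$ for $i\le 0$ and $x_i=1$ for $i\ge 1$ meets $\ul{0}1$ once and $\ul{1}0$ never, so $[\ul{0}1]\neq[\ul{1}0]$.

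Third, for wildness the missing idea is \cite[Theorem 3.14]{ag15}: a conical tame refinement monoid that is stably finite is cancellative. Wildness is then immediate from stable finiteness plus non-cancellativity. Your proposed route via infinitely many independent order-ideals cannot work, since for example $(\Z^+)^{(\N)}$ is tame and has infinitely many order-ideals. Nor can you contradict separativity, since $M$ is strongly separative.
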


\begin{proof}
This is a consequence of \cite[Theorem 7.4]{ae14} and its proof. We will show here that there is a well-defined monoid homomorphism $\psi \colon M\to \Typ (\sigma)$, and then we will indicate how to conclude that it is an isomorphism using the above mentioned result. 

Note that for any clopen set $U = a_1a_2 \cdots a_{i-1}\underline{a_i} a_{i+1}\cdots a_n$ in $\mathcal X$, the class of $U$ in $\Typ (\sigma)$ does not depend 
on the particular origin that we choose, because $[\sigma^m(U)] = [U]$ for each $m\in \Z$ in $\Typ (\sigma)$. Hence $[U]$ only depends on the block $a_1a_2\cdots a_n$, and so we have a 
well-defined map $\{0,1\}^n\to \Typ (\sigma)$ from the set of blocks of length $n$ to $\Typ (\sigma)$, sending $a_1a_2\cdots a_n\in D_n$ to $[a_1a_2 \cdots a_{i-1}\underline{a_i} a_{i+1}\cdots a_n]$, which is independent of $i$. This set map induces a monoid homomorphism $\psi_n \colon F(D_n)\to \Typ (\sigma)$, where $F(D_n)$ is the free commutative monoid on $D_n$. 
We need to show that this homomorphism descends to a homomorphism from $M_n= F_n/{\sim_n}$ to $\Typ(\sigma)$, denoted also by $\psi_n$, where $\sim_n$ is the congruence on $F(D_n)$ generated by the pairs
$(a0+a1,0a+1a)$ for all $a\in D_{n-1}$. Let $a=a_1\cdots a_{n-1}\in D_{n-1}$. Then 
\begin{align*}
\psi _n (a0 +a1) & =[\underline{a_1}a_2\cdots a_{n-1}0] + [\underline{a_1}a_2\cdots a_{n-1}1] \\
& = [\ul{a_1}a_2\cdots a_{n-1}] =[0\ul{a_1}a_2\cdots a_{n-1}] + [1\ul{a_1}a_2\cdots a_{n-1}]\\
& =\psi _n (0a+a1).
\end{align*}
Hence the map $\psi_n \colon M_n\to \Typ (\sigma)$ is a well-defined monoid homomorphism. It is also easy to show that the maps $\psi_n$ are compatible with the transition maps $\varphi_n$, that is, we have $\psi_{n} = \psi_{n+1}\circ \varphi_n $ for all $n\ge 1$. Hence we have a well-defined monoid homomorphism $\psi \colon M\to \Typ (\sigma)$ which is determined by the above rules. Note that, using the identification of the canonical resolution of the separated graph $(E,C)$ with the separated Bratteli diagram described in Example \ref{exam:lamplighter2}, it is clear that $M_n=M(E_{n-1},C^{n-1})$ for all $n\ge 1$, and that the maps $\varphi_n\colon M_n\to M_{n+1}$ correspond to the maps $\iota_{n-1} \colon M(E_{n-1},C^{n-1}) \to M(E_{n},C^{n})$ defined in \cite[Lemma 4.5]{ae14}. Under these identifications, the maps $\psi_n$ we have defined coincide with the maps $\Psi_{n-1}$ that appear in the proof of \cite[Theorem 7.4]{ae14}. It follows from that proof that the map $\psi\colon M\to \Typ (\sigma)$ is an isomorphism.       

It follows from Wehrung's Theorem (Theorem \ref{thm:supramenable}) that $M\cong \Typ (\sigma)$ is a strongly separative monoid. Since all the maps $M_n\to M$ are order-embeddings (by \cite[Lemma 4.5]{ae14}), it follows that all the monoids $M_n$ are strongly separative. To show that $M$ and all the monoids $M_n$, for $n\ge 2$, are non-cancellative, it suffices to show that $M_2$ is non-cancellative. But this follows from the identity $00+01 = 00+10$, which holds in $M_2$, together with the observation that $01\ne 10$ in $M_2$.

We now show that each $M_n$ is stably finite, which implies that $M= \varinjlim M_n$ is stably finite. 
To show that $M_n$ is stably finite, it suffices to build a faithful homomorphism $\tau_n \colon M_n \to \Z^+$. The homomorphism
$\tau_n $ is defined first on $F(D_n)$ as the map sending each $b\in D_n$ to $1$. Hence $\tau_n$ sends any element
$\alpha = \sum_{i=1}^n a_i$ to $n$, for $a_i\in D_n$. It is clear that this homomorphism factors through $M_n=F(D_n)/{\sim_n}$, and that it is faithful. Hence $M_n$ is stably finite.

The fact that $M$ is a wild refinement monoid follows from \cite[Theorem 3.14]{ag15}, because $M$ is a stably finite refinement conical monoid which is not cancellative.

The final assertion follows from \cite[Corollary 5.9 and Example 9.7]{ae14}.
\end{proof}

\begin{remark}
\label{rem:work-in-progress}
Although we know that the monoids $M_n$ from Theorem \ref{thm:type-monoid-self-similar} are strongly separative, its fine structure remains elusive. We indeed know that these monoids are strongly separative by an indirect argument, as indicated in the proof of Theorem \ref{thm:type-semigroup-lamplighter}, but I do not know a direct argument just using the presentation of the monoid.
In work in progress, Claramunt, Nazemian and the author tackle the problem of establishing a {\it normal form} for the elements of
$M_n$, which might be a useful tool to understand its fine structure. In particular, it would be interesting to know whether the monoids $M_n$ are unperforated or not.
\end{remark}

\section*{Acknowledgments}{The author is very grateful to Joan Claramunt, Bartosz Kwaśniewski and Fernando Lledó for their comments and suggestions on an earlier version of the paper, that have contributed to improve the presentation.}

\providecommand{\bysame}{\leavevmode\hbox
	to3em{\hrulefill}\thinspace}



\end{document}